\def\SS{{{\mathbb S}}}
\def\NN{{{\mathbb N}}}
\def\H{{{\mathcal H}}}
\def\K{{\mathcal{K}}}
\def\NK{{\mathcal{NK}}}
\def\AK{{\mathcal{AK}}}
\def\H{{\mathcal{H}}}
\def\JJ{{\mathbf{J}}}
\tikzset{
	subset/.style={
		draw=none,
		edge node={node [sloped, allow upside down, auto=false]{$\subset$}}},
	Subset/.style={
		draw=none,
		every to/.append style={
			edge node={node [sloped, allow upside down, auto=false]{$\subset$}}}
	}
}
\tikzset{
	labl/.style={anchor=south, rotate=90, inner sep=.50mm}
}
\newcommand{\ci}{\mathds{C}}
\newcommand{\del}[1]{\delta_{#1}}
\newcommand{\ricc}{\operatorname{Ric}}
\newcommand{\diver}{\operatorname{div}}
\newcommand{\cp}{\ci\mathbb{P}}
\newcommand{\bigslant}[2]{{\raisebox{.0em}{$#1$}\left/\raisebox{-.0em}{$#2$}\right.}}
\newcommand{\KN}{\mathbin{\bigcirc\mspace{-15mu}\wedge\mspace{3mu}}}
\newcommand{\Ra}{\Rightarrow}
\newcommand{\Lra}{\Leftrightarrow}
\newcommand{\Longra}{\Longrightarrow}
\newcommand{\set}[1]{{\left\{#1\right\}}}               
\newcommand{\pa}[1]{{\left(#1\right)}}                  
\newcommand{\sq}[1]{{\left[#1\right]}}                  
\newcommand{\abs}[1]{{\left|#1\right|}}                 
\newcommand{\eqsist}[1]{\begin{aligned}#1\end{aligned}}  
\newcommand{\ol}[1]{\overline{#1}}
\renewcommand{\hat}[1]{\widehat{#1}}
\renewcommand{\tilde}[1]{\widetilde{#1}}
\newcommand{\qd}[1]{\mathbin{{Q}_{#1}\mspace{-25mu}\tiny\raisebox{0.65ex}{$2$}\mspace{20mu}}}
\newcommand{\qt}[1]{\mathbin{{Q}_{#1}\mspace{-24.7mu}\tiny\raisebox{0.65ex}{$3$}\mspace{20mu}}}
\newcommand{\qq}[1]{\mathbin{{Q}_{#1}\mspace{-25.6mu}\tiny\raisebox{0.7ex}{$4$}\mspace{20mu}}}
\newcommand{\pmat}[1]{{\begin{pmatrix}#1\end{pmatrix}}} 
\newtheorem{theorem}{\textbf{Theorem}}[section]
\newtheorem{proposition}[theorem]{\textbf{Proposition}}
\newtheorem{cor}[theorem]{\textbf{Corollary}}
\theoremstyle{remark}
\newtheorem{rem}[theorem]{\textbf{Remark}}
\numberwithin{equation}{section}
\title[Rigidity results for Riemannian twistor spaces]
{Rigidity results for Riemannian twistor spaces under vanishing
curvature conditions}
\date{\today}
\keywords{Twistor space, four manifold, Bochner tensor, Weyl tensor,
	local conformal flatness, moving frames}
\subjclass[2010]{53C28, 53C25, 53B21}
\begin{document}
	\maketitle
	
	\date{\today}
	
	\begin{center}
		\textsc{\textmd{G. Catino\footnote{Politecnico di Milano, Italy.
					Email: giovanni.catino@polimi.it.}, D. Dameno \footnote{Universit\`{a} degli Studi di Milano, Italy.
					Email: davide.dameno@unimi.it.}, P.
				Mastrolia\footnote{Universit\`{a} degli Studi di Milano, Italy.
					Email: paolo.mastrolia@unimi.it.}. }}
	\end{center}

	\begin{abstract}
		In this paper we provide new rigidity results for
		four-dimensional Riemannian manifolds and their twistor spaces.
		In particular, using the moving frame method, we prove that
		$\cp^3$ is the only twistor space whose Bochner tensor is parallel;
		moreover, we classify Hermitian Ricci-parallel
		and locally symmetric twistor spaces and we show
		the nonexistence of conformally flat twistor spaces.
		We also generalize
		a result due to Atiyah, Hitchin and Singer concerning the self-duality
		of a Riemannian four-manifold.
	\end{abstract}

	
	%
	%
	%
	%
	%
	
	\

	\tableofcontents
	
	\
	
	\section{Introduction and main results}\label{secIntro}
	Let $(M,g)$ be an oriented Riemannian manifold of dimension $2n$, with metric
	$g$. The \emph{twistor space} $Z$ associated to $M$ is defined as the set of all the
	couples $(p,J_p)$ such that $p\in M$ and $J_p$ is a
	complex structure on $T_pM$ compatible with $g$, i.e. such that
	$g_p(J_p(X),J_p(Y))=g_p(X,Y)$ for every $X,Y\in T_pM$.
	\footnote{In this work, we call \emph{complex structure} an endomorphism $J_V$ of a
		vector space $V$ such that $J_V^2=-\operatorname{Id}_V$, while we call
		\emph{almost complex structure} a $(1,1)$ tensor field $J$ on a differentiable manifold
		$M$ such that $J$ assigns smoothly, to every point $p$, a complex structure $J_p$
		on $T_pM$.}
	
	Alternatively, we can define $Z$ in an equivalent way
	as
	\[
	Z=\bigslant{O(M)}{U(n)},
	\]
	where $O(M)$ denotes the orthonormal frame bundle over $M$ and
	the unitary group $U(n)$ is identified with a subgroup of $SO(2n)$
	(see \cite{debnan} for further details).
	
	These structures, introduced
	by Penrose (\cite{penrose}) as an attempt to define an innovative
	framework for Physics, have been the subject of many investigations
	by the mathematical community, in virtue of the numerous geometrical and algebraic tools involved in the definition of their properties. In 1978, Atiyah, Hitchin and
	Singer (\cite{athisin}) adapted Penrose's twistor theory to the Riemannian context,
	introducing the concept of twistor space associated to a Riemannian four-manifold
	and paving the way for many researches about this subject.

	The orientation on $M$ implies that $O(M)$ has two connected components,
	$O(M)_+$ and $O(M)_-$, and
	therefore we can define the two connected components of $Z$
	\[
	Z_{\pm}=\bigslant{O(M)_{\pm}}{U(n)}=\bigslant{SO(M)}{U(n)},
	\]
	where $SO(M)$ is the orthonormal oriented frame bundle over $M$.
	We choose the component $Z_-$ to be the twistor space of $(M,g)$
	(see also \cite{cdm} and \cite{salamon}). It is possible to define
	a natural family of Riemannian metrics $g_t$ on $Z_-$,
	where $t>0$ (\cite{debnan} \cite{jenrig}); from now on, we
	systematically use the notation $(Z,g_t)$ to denote the twistor space
	$Z_-$ endowed with the Riemannian metric $g_t$.
	
	
	
	In general, if $(M,g)$ is a Riemannian manifold of dimension
	$m\geq 3$, the Riemann curvature tensor $\operatorname{Riem}$ on $M$
	admits the well known decomposition
	\[
	\operatorname{Riem}=\operatorname{W}+\frac{1}{m-2}\operatorname{Ric}\KN g
	-\frac{S}{2(m-1)(m-2)}g\KN g,
	\]
	where $\operatorname{W}$, $\operatorname{Ric}$ and $S$ denote the
	\emph{Weyl tensor}, the \emph{Ricci tensor} and the \emph{scalar curvature}
	of $M$, respectively, and $\KN$ is the \emph{Kulkarni-Nomizu product}. Moreover,
	the Riemann curvature tensor defines a symmetric linear operator
	from the bundle of two-forms $\Lambda^2$ to itself
	\begin{align*}
		\mathcal{R}\colon\Lambda^2&\longrightarrow\Lambda^2\\
		\gamma&\longmapsto\mathcal{R}(\gamma)=\dfrac{1}{4}R_{ijkt}\gamma_{kt}\theta^i\wedge\theta^j,
	\end{align*}
	where $\{\theta^i\}_{i=1,\ldots,m}$ is a local orthonormal coframe on an
	open set $U\subset M$, with dual frame $\{e_i\}_{i=1,\ldots,m}$,
	$\gamma_{kt}=\gamma(e_k,e_t)$ and $R_{ijkt}$ are the components
	of the Riemann tensor with respect to the coframe $\{\theta^i\}$.
	
	If $m=4$ and $M$ is oriented, $\Lambda^2$ splits, {\itshape via}
	the Hodge $\star$ operator, into the direct sum of
	two subbundles $\Lambda_+$ and $\Lambda_-$
	.
	This implies that the Riemann curvature operator
	$\mathcal{R}$ assumes a block matrix form
	\[
	\mathcal{R}=
	\left(
	\begin{array}{cc}
		A & B^T\\
		B & C
	\end{array}
	\right),
	\]
	where $A$ (resp, $C$) is a
	symmetric endomorphism of $\Lambda_+$ (resp., $\Lambda_-$) and $B$ is
	a symmetric linear map from $\Lambda_+$ to $\Lambda_-$ (see \cite{athisin},
	\cite{besse} and \cite{singthor}).
	Moreover, $\operatorname{tr}A=\operatorname{tr}C=\frac{S}{4}$.
	This corresponds to a decomposition of the Weyl tensor into a sum
	\[
	W=W^++W^-,
	\]
	where $W^+$ (resp., $W^-$) is called the {\em self-dual} (resp., {\em anti-self-dual}) {\em part}
	of $W$. If $W^+=0$ (resp., $W^-=0$), we say that $M$ is an {\em anti-self-dual}
	(resp., {\em self-dual}) {\em manifold}. If we consider the symmetric linear operators induced
	by $W^+$ and $W^-$, we have that their representative matrices are
	$A-\frac{S}{12}I_3$ and $C-\frac{S}{12}I_3$, respectively, with respect to any
	positively oriented local orthonormal coframe; thus, $(M,g)$ is self-dual (resp.,
	anti-self-dual) if and only if $C=\frac{S}{12}I_3$ (resp., $A=\frac{S}{12}I_3$).
	Note that, if the coframe is negatively oriented, $A$ and $C$ need to be
	exchanged in the previous statements.

	In this paper, starting from our previous work \cite{cdm}, we focus
	our attention on some rigidity results concerning twistor spaces
	satisfying vanishing conditions on relevant geometric tensors, such
	as the Weyl tensor, the Bochner tensor and the covariant derivatives
	of the Ricci tensor and the Riemann tensor. For instance, we are
	able to show the following results:
	\begin{itemize}
		 \item nonexistence of locally conformally flat twistor spaces;
		 \item a twistor space is Bochner-flat if and only if the underlying manifold is homotetically isometric to $\SS^4$;
		 \item characterization of Ricci-parallel and locally symmetric twistor spaces;
		 \item a generalization of Atiyah-Hitchin-Singer result, using the divergences of the Nijenhuis tensor(s).
	\end{itemize}
	 The paper is organized as follows: in Section 2, we show that, given
	 a Riemannian four-manifold $(M,g)$, its twistor space $(Z,g_t)$ cannot be locally conformally flat for any $t>0$.
	
	 Section 3 is devoted to the characterization of Bochner-flat
	 twistor spaces: in particular, we show that the only Bochner-parallel
	 twistor space is "essentially" $\cp^3$, which is the one associated
	 to the four-sphere $\mathds{S}^4$.
	
	 In Section 4 we consider Ricci parallel and locally symmetric twistor
	 spaces, providing rigidity results for twistor spaces whose
	 Atiyah-Hitchin-Singer almost complex structure $J^+=J$ is integrable
	 (see Appendix \ref{appb} for details).
	
	 In Section 5 we prove a general quadratic formula for
	 $\abs{\nabla J}^2$; moreover, we generalize the necessary and sufficient
	 condition for the integrability of $J$, first proven by Atiyah,
	 Hitchin and Singer \cite{athisin}, through a vanishing condition
	 on the divergences of the associated Nijenhuis tensor.
	 We also prove a new result concerning the Nijenhuis tensor of the
	 Eells-Salamon almost complex structure $J^-=\JJ$ (see \cite{elsal}).
	
	 To keep the paper self-contained as much as possible,
	 we provide also five brief appendices
	 devoted to technicalities and some heavy computations (for instance,
	 the list of the local components of the Weyl tensor of a twistor
	 space $(Z,g_t)$).
	
	\

\

\

\

\section{Locally conformally flat twistor spaces}

In this section, we want to show that the twistor space $(Z,g_t)$ associated
to a Riemannian four-manifold $(M,g)$ is never locally conformally flat
for any $t>0$.
By Weyl-Schouten
Theorem, we know that a Riemannian manifold of dimension $n\geq 4$ is
locally conformally flat if and only if its Weyl tensor $\operatorname{W}$
vanishes identically (for a proof, see \cite{jeromin} or \cite{lee2}).

Before we state the main result of this section, let us
recall the transformation laws
for the matrices $A$ and $B$ appearing in the decomposition of
the Riemann curvature operator: we know that, given a
local orthonormal frame $e\in O(M)_-$,
if we choose another frame $\tilde{e}\in O(M)_-$, the
change of frames is determined by a matrix $a\in SO(4)$ and that
the matrices $A$ and $B$ transform
according to the equations
\begin{equation} \label{transfabc}
	\tilde{A}=a_+^{-1}Aa_+, \qquad \tilde{B}=a_-^{-1}Ba_+
\end{equation}
where $SO(3)\times SO(3)$\reflectbox{$\in$}$(a_+,a_-)=\mu(a)$ and
$\mu$ is a surjective homomorphism from $SO(4)$ to $SO(3)\times SO(3)$
induced by the universal covers of $SO(4)$ and $SO(3)$ (see
\cite{besse}, \cite{cdm} and \cite{salamon} for a detailed description).

For the sake of simplicity, throughout the paper we adopt the following
notation
\begin{align} \label{riemq}
	\qd{ab}&:=R_{12ab}+R_{34ab};\\
	\qt{ab}&:=R_{13ab}+R_{42ab}; \notag\\
	\qq{ab}&:=R_{14ab}+R_{23ab}. \notag
\end{align}
We also compute the differentials of the components listed in \eqref{riemq}:
\begin{align} \label{covderq}
	d\qd{ab} &= {\qd{ab}}_{,c}\omega^c + \qd{ac}\omega^c_b + \qd{cb}\omega^c_a+
	\qq{ab}({\omega^1_3+\omega^4_2}) - \qt{ab}({\omega^1_4+\omega^2_3}); \\
	d\qt{ab} &= {\qt{ab}}_{,c}\omega^c + \qt{ac}\omega^c_b + \qt{cb}\omega^c_a-
	\qq{ab}({\omega^1_2+\omega^3_4}) + \qd{ab}({\omega^1_4+\omega^2_3}); \notag\\
	d\qq{ab} &= {\qq{ab}}_{,c}\omega^c + \qq{ac}\omega^c_b + \qq{cb}\omega^c_a + \qt{ab}({\omega^1_2+\omega^3_4}) - \qd{ab}({\omega^1_3+\omega^4_2}). \notag,
\end{align}
where $\{\omega^1,...,\omega^4\}$ is a local orthonormal coframe and 
$\omega_j^i$ are the associated Levi-Civita connection 1-forms.

%
%
%
Now, we can state the following result, which is new, to the best of
our knowledge:
\begin{theorem} \label{locconftwist}
	Let $(M,g)$ be a Riemannian four-manifold and $(Z,g_t)$ be its twistor
	space. Then, $(Z,g_t)$ is not locally conformally flat for any $t>0$.
\end{theorem}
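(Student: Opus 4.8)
The plan is to reduce the statement, via the Weyl--Schouten Theorem recalled above, to proving that the Weyl tensor $\operatorname{W}$ of $(Z,g_t)$ does not vanish identically for any $t>0$ and any base $(M,g)$; since $\dim Z=6\geq 4$, vanishing of $\operatorname{W}$ is equivalent to local conformal flatness. To access $\operatorname{W}$ I would work with the adapted local orthonormal coframe on $Z$ furnished by the twistor fibration $\pi\colon Z\to M$: a horizontal part $\{\theta^1,\dots,\theta^4\}$ obtained by rescaling the pullback of an orthonormal coframe of $(M,g)$, together with a vertical part $\{\theta^5,\theta^6\}$ tangent to the $S^2$-fibre and orthonormal for $g_t$. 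With respect to this coframe the components $R_{ABCD}$ of the curvature of $(Z,g_t)$ are the ones listed in the appendix, expressed through the curvature of $(M,g)$ (organized in the combinations $\qd{ab}$, $\qt{ab}$, $\qq{ab}$), the scalar curvature, and explicit functions of $t$ coming from the fibre geometry.

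Next I would record what $\operatorname{W}\equiv 0$ means in dimension six. By the curvature decomposition recalled in the Introduction, $\operatorname{W}\equiv 0$ is equivalent to $\operatorname{Riem}$ coinciding with its Schouten part, i.e.
\begin{equation*}
	R_{ABCD}=\tfrac{1}{4}\big(R_{AC}\delta_{BD}-R_{AD}\delta_{BC}+R_{BD}\delta_{AC}-R_{BC}\delta_{AD}\big)-\tfrac{S}{20}\big(\delta_{AC}\delta_{BD}-\delta_{AD}\delta_{BC}\big),
\end{equation*}
for all $A,B,C,D\in\{1,\dots,6\}$. In particular every component with four pairwise distinct indices must vanish, and the sectional curvatures must satisfy $R_{ABAB}=\tfrac14(R_{AA}+R_{BB})-\tfrac{S}{20}$ (no summation).

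Then I would feed the appendix components into these constraints and extract a \emph{universal} contradiction. The decisive input is that the fibre $S^2$ is a totally geodesic round sphere whose intrinsic curvature is a fixed nonzero multiple of $1/t$, so the purely vertical sectional curvature $R_{5656}$ is a definite positive function of $t$ alone, while the mixed components $R_{a5b6}$ and $R_{ab56}$ (for horizontal indices $a,b$) are governed by the nontrivial O'Neill tensor of the fibration. I would then compare the four-distinct-index vanishing conditions on the mixed block with the sectional-curvature identities linking $R_{5656}$, $R_{a5a5}$ and the Ricci components: the $t$-dependence forced by the round fibre cannot be matched by the horizontal and Ricci terms simultaneously, for any $t>0$. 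Concretely, I expect one of the Weyl components of mixed type to reduce, after subtracting its Schouten part, to a nonzero expression proportional to $1/t$ with no compensating base-curvature contribution, which already forbids $\operatorname{W}\equiv 0$.

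The main obstacle is organizational rather than conceptual: the list of Weyl components of $(Z,g_t)$ is long, and each entry mixes base curvature, the parameter $t$, and the bundle geometry, so many individual components genuinely do depend on the arbitrary metric $g$. The crux is therefore to isolate a minimal set of components whose simultaneous vanishing is impossible for structural reasons --- because it encodes the incompatibility between a round, totally geodesic $S^2$-fibre and the conformal flatness of the total space --- rather than for reasons special to a particular $(M,g)$. Identifying that universal obstruction, and checking that it persists for all $t>0$, is the heart of the argument.
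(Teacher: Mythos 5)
There is a genuine gap here: your reduction via Weyl--Schouten and your setup (adapted coframe, appendix components \eqref{weyltwist}) coincide with the paper's first step, but the ``universal obstruction'' on which your whole plan rests does not exist in the form you expect. You predict that some mixed Weyl component, after subtracting its Schouten part, reduces to a nonzero expression proportional to a power of $1/t$ ``with no compensating base-curvature contribution''. Inspecting \eqref{weyltwist}, \emph{every} component of $\ol{\operatorname{W}}$ carries base-curvature terms: for instance the purely vertical one is
\begin{equation*}
\ol{W}_{5656}=\dfrac{3}{5t^2}+\dfrac{S}{20}-\dfrac{3}{40}t^2\pa{|\qt{ab}|^2+|\qq{ab}|^2},
\end{equation*}
which can be made to vanish pointwise for suitable base geometry (it is a quadratic condition in $S$ for an Einstein, self-dual base), and the mixed components $\ol{W}_{ab56}$, $\ol{W}_{5ab6}$, $\ol{W}_{5ab5}$ all involve $\qd{ab}$, $R_{ab}$ or quadratic curvature expressions of $(M,g)$. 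So no single component, nor any fixed finite list of components read at one point in one frame, yields the contradiction; the totally geodesic round fibre by itself is not an obstruction to conformal flatness of the total space.

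What actually closes the argument in the paper, and what your proposal is missing, is the systematic use of the \emph{frame-freedom}: since $\ol{\operatorname{W}}\equiv 0$ is a global, frame-independent condition, each scalar equation extracted from \eqref{weyltwist} must hold for every negatively oriented orthonormal frame, and the transformation law \eqref{transfabc} under the $SO(3)\times SO(3)$ action converts one such equation into a whole system. Concretely, the vanishing of $\ol{W}_{ab56}$ gives $2A_{11}+t^2(A_{23}^2-A_{22}A_{33})=0$; diagonalizing $A$ and permuting its eigenvalues by frame changes produces three coupled equations whose only solutions force the diagonal of $A$ (hence $S$) into finitely many explicit values. Then the equations $\ol{W}_{5ab5}+\ol{W}_{6ab6}=0$, again propagated through frame changes, force $B=0$ (or the vanishing of most entries of $B$) in each case, and only after $A$ and $B$ have been pinned down this way does $\ol{W}_{5656}=0$ give a purely numerical contradiction such as $128/t^4=8/(3t^4)$. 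Without this mechanism for amplifying one pointwise identity into rigid algebraic constraints on the full matrices $A$ and $B$, your plan stalls exactly at the step you yourself identify as ``the heart of the argument''.
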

\begin{proof}
	Let us suppose that $(Z,g_t)$ is locally conformally flat, i.e., by
	Weyl-Schouten Theorem, $\ol{\operatorname{W}}\equiv 0$ on $Z$. By the
	vanishing of the coefficients $\ol{W}_{ab56}$ in \eqref{weyltwist},
	we obtain the system
	\[
	\begin{cases}
		\qd{12}=\frac{t^2}{4}(\qt{1c}\qq{2c}-\qt{2c}\qq{1c})\\
		\qd{34}=\frac{t^2}{4}(\qt{3c}\qq{4c}-\qt{4c}\qq{3c})
	\end{cases};
	\]
	expliciting the right-hand sides and then summing the equations,
	we derive the equality
	\[
	2A_{11}+t^2(A_{23}^2-A_{22}A_{33})=0.
	\]
	Note that $\ol{\operatorname{W}}\equiv 0$ is a global condition: in
	particular, this means that the equation above must hold for every
	$p\in M$ (it suffices to consider the pullback maps \emph{via}
	any section of the twistor bundle). Moreover, since the locally
	conformally flatness is a frame-independent condition, the equation
	holds for every local negatively oriented orthonormal frame
	$e\in O(M)_-$. In particular, since $A$ is a symmetric matrix, we
	have that the equality holds for every frame $e$ such that $A$ is
	diagonal; in this situation, we have that
	\[
	2A_{11}-t^2A_{22}A_{33}=0,
	\]
	for every frame with respect to which $A$ is diagonal.
	By \eqref{transfabc}, we can exchange the diagonal entries of $A$
	with suitable changes of frames in order to obtain the additional
	equations
	\begin{align*}
		0&=2\tilde{A}_{11}-t^2\tilde{A}_{22}\tilde{A}_{33}=
		2A_{22}-t^2A_{11}A_{33}\\
		0&=2\hat{A}_{11}-t^2\hat{A}_{22}
		\hat{A}_{33}=
		2A_{33}-t^2A_{11}A_{22},
	\end{align*}
	where $\tilde{A}_{ij}$ and $\hat{A}_{ij}$ are the entries of the
	matrix $A$ with respect to some frames $\tilde{e}$ and $\hat{e}$,
	respectively. At a point $p\in M$, since $t>0$,
	the system of these three equations
	admits three distinct solutions:
	\begin{enumerate}
		\item $A_{11}=A_{22}=A_{33}=0$;
		\item $A_{11}=A_{22}=A_{33}=\dfrac{2}{t^2}$;
		\item two diagonal entries out of three are equal to
		$-\dfrac{2}{t^2}$, while the third is equal to $\dfrac{2}{t^2}$.
	\end{enumerate}
	This means that, at $p\in M$, the scalar curvature $S$ of $(M,g)$
	can attain the values $0$, $24/t^2$ or $-8/t^2$. Since the
	scalar curvature is a smooth function on $M$ and,
	for every point of $M$, one of the three equations must hold,
	we can conclude that $S$ is constant on
	$M$: indeed, the possible values for $S$ are finitely many, therefore,
	if $S(p)\neq S(p')$ for $p, p'\in M$, $S$ would not be a smooth function.
	
	First, let us prove that the first two cases lead to a contradiction.
	Note that, in this situation, $A$ is a scalar matrix for every point
	$p\in M$ (and, by \eqref{transfabc}, for every frame), which means
	that $(M,g)$ is a self-dual manifold. By the vanishing of
	the components $\ol{W}_{5ab5}$ and $\ol{W}_{6ab6}$, if $a\neq b$,
	we obtain
	\[
	0=\ol{W}_{5ab5}+\ol{W}_{6ab6}=
	\dfrac{1}{2}R_{ab}-\dfrac{t^2}{2}(\qt{ac}\qt{bc}+\qq{ac}\qq{bc});
	\]
	in particular, for $(a,b)=(1,2)$ and $(a,b)=(3,4)$, by
	the self-duality condition we can compute
	\begin{align*}
		\qq{13}-\qt{14}&=R_{12}=\dfrac{t^2 S}{6}(\qq{13}-\qt{14})\\
		\qq{13}+\qt{14}&=R_{34}=\dfrac{t^2 S}{6}(\qq{13}+\qt{14}),
	\end{align*}
	which imply immediately $\qt{14}=\qt{23}=\qq{13}=\qq{42}=0$ on $M$.
	This is equivalent to say that the entries $B_{23}$ and $B_{32}$
	of the matrix $B$ vanish
	identically on $M$; since this is a global condition, by suitable change
	of frames, equation \eqref{transfabc} implies that the matrix $B$ is
	the zero matrix, i.e. $(M,g)$ is an Einstein manifold (see also
	\cite{cdm} for a detailed proof). However, we have
	that
	\[
	0=\ol{W}_{5656}\Longra\abs{\qt{ab}}^2+\abs{\qq{ab}}^2=\dfrac{2S}{3t^2}+
	\dfrac{8}{t^4};
	\]
	the left-hand side of the second equation is equal to $(S^2/18)$ for
	an Einstein, self-dual manifold, hence, for $S$ equal to $0$ or to
	$24/t^2$, we get a contradiction.
	
	Thus, we can choose a frame $e$ with respect to which $A$ is diagonal
	and
	\[
	A_{11}=-A_{22}=A_{33}=-\dfrac{2}{t^2}.
	\]
	If we consider again the equations $\ol{W}_{5ab5}=\ol{W}_{6ab6}=0$,
	we obtain
	\begin{align*}
		\dfrac{1}{2}(\qq{13}-\qt{14})&=\dfrac{1}{2}R_{12}=
		\dfrac{t^2}{2}(\qt{1c}\qt{2c}+\qq{1c}\qq{2c})=2(\qq{13}+\qt{14})\\
		\dfrac{1}{2}(\qq{13}+\qt{14})&=\dfrac{1}{2}R_{34}=
		\dfrac{t^2}{2}(\qt{3c}\qt{4c}+\qq{3c}\qq{4c})=2(\qq{13}-\qt{14}),
	\end{align*}
	which obviously imply $\qt{14}=\qq{13}=0$, i.e. $B_{32}=B_{23}=0$
	in the chosen frame $e$. In fact, we can say more:
	the equalities $B_{32}=B_{23}=0$ hold for every frame $e'$ with respect
	to which the matrix $A$ is in diagonal form
	with $A_{11}=-A_{22}=A_{33}=-2/t^2$.
	Note that we can choose suitable change of frames such that
	$\tilde{A}=A$, where $\tilde{A}$ is the matrix associated to
	the transformed frame $\tilde{e}$: indeed, it suffices to
	choose $a_+=I_3$ in \eqref{transfabc}.
	
	Therefore, with suitable choices of $a_-$ and putting
	$a_+=I_3$ in \eqref{transfabc}, it is immediate to show that
	\[
	B_{12}=B_{13}=B_{22}=B_{23}=B_{32}=B_{33}=0
	\]
	for a frame $e$ with respect to which $A$ is in diagonal form
	with $A_{11}=-A_{22}=A_{33}=-2/t^2$.
	
	Finally, let us compute
	\begin{align*}
	0=\ol{W}_{5656}\Longra \dfrac{128}{t^4}=
	\abs{\qt{ab}}^2+\abs{\qq{ab}}^2=\dfrac{2S}{3t^2}+\dfrac{8}{t^4}=
	\dfrac{8}{3t^4},
	\end{align*}
	which is obviously impossible. Thus, $(Z,g_t)$ cannot be locally
	conformally flat.
\end{proof}
By well-known results due to G\l odek (see \cite{glodek}), 
Derdzi\'{n}ski and Roter (see \cite{derdroter} and \cite{roter}), 
it is immediate to show the following
\begin{cor} \label{weylpartwist}
	A twistor space $(Z,g_t)$ is conformally symmetric, i.e.
	$\nabla\ol{\operatorname{W}}\equiv 0$,
	if and only if it is locally symmetric,
	i.e. $\nabla\ol{\operatorname{Riem}}\equiv 0$.
\end{cor}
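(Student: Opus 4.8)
The plan is to reduce the equivalence to a dichotomy theorem for conformally symmetric Riemannian manifolds and then to discard one of the two alternatives by appealing to Theorem \ref{locconftwist}. Since the twistor space $(Z,g_t)$ of a four-manifold is a six-dimensional, honestly positive-definite Riemannian manifold, both hypotheses needed to apply the classical theory will be in force.

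First I would dispose of the easy implication. Suppose $(Z,g_t)$ is locally symmetric, i.e. $\nabla\ol{\operatorname{Riem}}\equiv 0$. Tracing the parallel condition yields $\nabla\ol{\operatorname{Ric}}\equiv 0$ and, taking a further trace, $\nabla\ol{S}\equiv 0$; together with $\nabla g_t\equiv 0$, this makes the Kulkarni--Nomizu products $\ol{\operatorname{Ric}}\KN g_t$ and $g_t\KN g_t$ parallel. The curvature decomposition recalled in the Introduction then writes $\ol{\operatorname{W}}$ as a fixed linear combination (with constant coefficients depending only on $\dim Z=6$) of $\ol{\operatorname{Riem}}$, $\ol{\operatorname{Ric}}\KN g_t$ and $g_t\KN g_t$, all of which are parallel. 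Hence $\nabla\ol{\operatorname{W}}\equiv 0$, so every locally symmetric twistor space is conformally symmetric.

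For the converse, which is the substantive direction, I would invoke Gl\l odek's theorem \cite{glodek}: a conformally symmetric Riemannian manifold of dimension $\geq 4$ (with positive-definite metric) is either locally symmetric or locally conformally flat. As $(Z,g_t)$ meets these requirements, the assumption $\nabla\ol{\operatorname{W}}\equiv 0$ forces it into one of these two classes. The conformally flat alternative is then excluded directly by Theorem \ref{locconftwist}, which guarantees that no twistor space $(Z,g_t)$ is locally conformally flat for any $t>0$. The only surviving possibility is $\nabla\ol{\operatorname{Riem}}\equiv 0$, closing the equivalence.

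The argument is, in essence, a short deduction rather than a computation, so I do not expect a genuine obstacle; the single point demanding care is making sure that Gl\l odek's dichotomy is applied in exactly the right form, namely in the positive-definite setting and in dimension six, so that the previously established nonexistence of conformally flat twistor spaces can be plugged in verbatim. I would therefore state the corollary with an explicit reference to \cite{glodek} and to Theorem \ref{locconftwist}, leaving the trivial implication to the one-line tracing argument above.
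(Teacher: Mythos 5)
Your proof is correct and follows exactly the route the paper intends: the corollary is stated there as an immediate consequence of G\l odek's dichotomy (conformally symmetric $\Rightarrow$ locally symmetric or locally conformally flat) combined with Theorem \ref{locconftwist}, with the easy converse direction following from the curvature decomposition as you describe. Your write-up simply makes explicit what the paper leaves as ``immediate,'' so there is nothing to correct.
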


\section{Bochner-flat twistor spaces} \label{bochner}

Let $(N,g,J)$ be a almost Hermitian manifold of dimension $2n$. We can define
the \emph{Bochner tensor} $\operatorname{B}$ of $N$ as the $(0,4)$-tensor
whose components with respect to a local orthonormal frame are

\begin{align} \label{bochneralmherm}
B_{pqrs}=R_{pqrs}&+\dfrac{1}{2(n+2)}\left[\del{ps}R_{qr}-\del{pr}R_{qs}
+\del{qr}R_{ps}-\del{qs}R_{pr}+\right.\\
&\left. +J_s^pJ_r^tR_{qt}-J_r^pJ_s^tR_{qt}-2J_q^pJ_s^tR_{rt}+\right. \notag
\\
&\left. +J_r^qJ_s^tR_{pt}-J_s^qJ_r^tR_{pt}-2J_s^rJ_q^tR_{pt}\right]+ \notag
\\
&-\dfrac{S}{4(n+1)(n+2)}\sq{\del{ps}\del{qr}-\del{pr}\del{qs}+J_s^pJ_r^q
	-J_r^pJ_s^q-2J_q^pJ_s^r}. \notag
\end{align}

This tensor was first introduced by Bochner as a "complex analogue" of the Weyl
tensor \cite{boch}. It is important to note that some authors define the
Bochner tensor as $-\operatorname{B}$,
because of a different convention for the sign of
the Riemann tensor (see, for instance, \cite{tachi} and \cite{tanno}).

We say that $N$ is a \emph{Bochner-flat manifold} if $\operatorname{B}$
vanishes identically,
i.e. if $B_{pqrs}=0$ for every $1\leq p,q,r,s\leq 2n$. It is known that, in
general, the Bochner tensor does not satisfy the same symmetries as the
Riemann tensor (see, for instance, \cite{vanyano}). However, if $N$ is
Bochner-flat, by \eqref{bochneralmherm} we obtain
\begin{align} \label{symmboch}
R_{pqrs}=&-\dfrac{1}{2(n+2)}\left[\del{ps}R_{qr}-\del{pr}R_{qs}
+\del{qr}R_{ps}-\del{qs}R_{pr}+\right.\\
&\left. +J_s^pJ_r^tR_{qt}-J_r^pJ_s^tR_{qt}-2J_q^pJ_s^tR_{rt}+\right. \notag
\\
&\left. +J_r^qJ_s^tR_{pt}-J_s^qJ_r^tR_{pt}-2J_s^rJ_q^tR_{pt}\right]+ \notag
\\
&+\dfrac{S}{4(n+1)(n+2)}\sq{\del{ps}\del{qr}-\del{pr}\del{qs}+J_s^pJ_r^q
	-J_r^pJ_s^q-2J_q^pJ_s^r}, \notag
\end{align}
which means that the right-hand side of \eqref{symmboch} satisfies the
same symmetries as the Riemann tensor.

Now, let $(M,g)$ be a four-dimensional Riemannian manifold and let
$(Z,g_t,J)$ be its twistor space, regarded as an almost Hermitian manifold.
It is known that $(Z,g_t,J)$ is a K\"{a}hler-Einstein manifold if and only if
$(M,g)$ is an Einstein, self-dual manifold with scalar curvature $S=12/t^2$
(see, for instance, \cite{cdm}, \cite{davmus1}, \cite{mus} and
Corollary \ref{kahlertwist} of this paper). Let us
suppose that $(Z,g_t,J)$ is a K\"{a}hler-Einstein manifold and let
$\ol{\operatorname{B}}$ be its Bochner tensor.
Under these hypotheses, we can compute
the components $\ol{B}_{pqrs}$:
\begin{align} \label{bochkahltwist}
	\ol{B}_{pqrs}&=0, \mbox{ if at least one of the indices
		is equal to 5 or 6};\\
	\ol{B}_{abcd}&=R_{abcd}-\dfrac{1}{t^2}(\delta_{ac}\delta_{bd}-
	\delta_{ad}\delta_{bc})\notag
\end{align}
(note that, in this case, the Bochner tensor satisfies the same
symmetries as the Riemann tensor. See also Remark \ref{bochnsymm}).
By direct inspection of these components and by recalling that
$\mathds{S}^4$ is the only four-dimensional space form with positive sectional
curvature, up to isometries, one can show the following
\begin{proposition} \label{bochflatkahl}
Let $(M,g)$ be a Riemannian four-manifold
such that its twistor space $(Z,g_t,J)$ is K\"{a}hler-Einstein.
Then $(Z,g_t,J)$ is Bochner-flat if and only if
$(M,g)$ is isometric to $\mathds{S}^4$, with its canonical Riemannian
metric.
\end{proposition}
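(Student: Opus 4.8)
The plan is to extract the full content of \eqref{bochkahltwist}. Under the standing Kähler-Einstein hypothesis, the first line of \eqref{bochkahltwist} tells us that every component $\ol{B}_{pqrs}$ with at least one index in $\{5,6\}$ vanishes automatically; hence $(Z,g_t,J)$ is Bochner-flat if and only if the purely horizontal components vanish, i.e. $\ol{B}_{abcd}=0$ for all $a,b,c,d\in\{1,2,3,4\}$. By the second line of \eqref{bochkahltwist}, this is in turn equivalent to
\[
R_{abcd}=\frac{1}{t^2}\bigl(\delta_{ac}\delta_{bd}-\delta_{ad}\delta_{bc}\bigr),
\]
where $R_{abcd}$ denotes the Riemann tensor of the base $(M,g)$ in any local orthonormal frame. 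Since both sides transform as $(0,4)$-tensors, this identity is frame-independent, and it is precisely the statement that $(M,g)$ has constant sectional curvature equal to $1/t^2>0$. Thus, given that $(Z,g_t,J)$ is Kähler-Einstein, Bochner-flatness is equivalent to $(M,g)$ being a positively curved space form of curvature $1/t^2$.

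For the forward implication I would then invoke the classification of four-dimensional space forms recalled before the statement: a manifold of positive constant sectional curvature is, up to isometry, the round $\mathds{S}^4$ (the orientability of $M$, forced by the very definition of the twistor space, excludes the nonorientable quotient $\rp^4$). This yields that $(M,g)$ is isometric to $\mathds{S}^4$ with its canonical metric, of sectional curvature $1/t^2$. For the converse, I would substitute $R_{abcd}=\kappa\bigl(\delta_{ac}\delta_{bd}-\delta_{ad}\delta_{bc}\bigr)$, the curvature tensor of the round sphere, into the second line of \eqref{bochkahltwist}: the Kähler-Einstein normalization recalled above gives $S=12/t^2$, hence $\kappa=S/12=1/t^2$, so that $\ol{B}_{abcd}=0$; combined with the vanishing of all mixed components this gives $\ol{\operatorname{B}}\equiv 0$.

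The step I expect to require the most care is not a computation but the passage from constant curvature to a genuine isometry with $\mathds{S}^4$: strictly, the pointwise identity above only forces $(M,g)$ to be a space form, so I would need to be explicit about the global hypotheses (completeness, and simple connectedness to rule out spherical space-form quotients) under which ``space form of positive curvature'' collapses to ``the round $\mathds{S}^4$''. Invoking the orientation of $M$ together with the cited uniqueness of the positively curved four-dimensional space form is what ultimately pins down $\mathds{S}^4$ with its canonical metric; everything else is a direct reading of \eqref{bochkahltwist}.
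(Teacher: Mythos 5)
Your proof is correct and follows essentially the paper's own route: under the K\"{a}hler-Einstein hypothesis the paper likewise reads off \eqref{bochkahltwist} to reduce Bochner-flatness to $R_{abcd}=\tfrac{1}{t^2}\left(\delta_{ac}\delta_{bd}-\delta_{ad}\delta_{bc}\right)$, i.e. constant sectional curvature $1/t^2>0$, and then invokes the fact that $\mathds{S}^4$ is the only four-dimensional space form of positive curvature up to isometry (orientability of $M$, needed to define $Z$, excluding $\rp^4$). The completeness issue you flag at the end is equally implicit in the paper's statement and one-line argument, so your treatment matches theirs.
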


It is natural to ask whether Proposition \ref{bochflatkahl} can be
generalized or not if there are no hypothesis on the almost complex
structure $J$: more precisely, our goal is to characterize almost
Hermitian, Bochner-flat twistor spaces. Rather surprisingly,
it turns out that $\SS^4$ is the only Riemannian four-manifold whose
twistor space is Bochner-flat.

First, let us define
the covariant derivative $\nabla{\operatorname{B}}$ of the Bochner tensor
$\operatorname{B}$ of an almost Hermitian manifold $(N,g,J)$,
whose components with respect to a local orthonormal coframe are
\begin{align} \label{nablaboch}
B_{pqrs,u}=R_{pqrs,u}&+\dfrac{1}{2(n+2)}\left[\del{ps}R_{qr,u}-
\del{pr}R_{qs,u}+\del{qr}R_{ps,u}-\del{qs}R_{pr,u}+ \right.\\
&\left.+R_{qt}(J_r^tJ_{s,u}^p+J_s^pJ_{r,u}^t)+J_s^pJ_r^tR_{qt,u}+\right.
\notag \\
&\left.-R_{qt}(J_s^tJ_{r,u}^p+J_r^pJ_{s,u}^t)-J_r^pJ_s^tR_{qt,u}+\right.
\notag\\
&\left.-2R_{rt}(J_s^tJ_{q,u}^p+J_q^pJ_{s,u}^t)-2J_q^pJ_s^tR_{rt,u}+\right.
\notag\\
&\left.+R_{pt}(J_s^tJ_{r,u}^q+J_r^qJ_{s,u}^t)+J_r^qJ_s^tR_{pt,u}+\right.
\notag\\
&\left.-R_{pt}(J_r^tJ_{s,u}^q+J_s^qJ_{r,u}^t)-J_s^qJ_r^tR_{pt,u}+\right.
\notag\\
&\left.-2R_{pt}(J_q^tJ_{s,u}^r+J_s^rJ_{q,u}^t)-2J_s^rJ_q^tR_{pt,u}\right]+
\notag\\
&-\dfrac{S_u}{4(n+1)(n+2)}\sq{\del{ps}\del{qr}-\del{pr}\del{qs}+
	J_s^pJ_r^q-J_r^pJ_s^q-2J_q^pJ_s^r}\notag\\
&-\dfrac{S}{4(n+1)(n+2)}[J_r^qJ_{s,u}^p+J_s^pJ_{r,u}^q-
J_s^qJ_{r,u}^p-J_r^pJ_{s,u}^q-2J_s^rJ_{q,u}^p-2J_q^pJ_{s,u}^r], \notag
\end{align}
and
\begin{align*}
\nabla\ricc&=R_{pq,t}\theta^t\otimes\theta^p\otimes\theta^q\\
\nabla J&=J_{q,t}^p\theta^t\otimes\theta^q\otimes e_p\\
dS&=S_u\theta^u.
\end{align*}
We say that $(N,g,J)$ is \emph{Bochner-parallel} if
$\nabla\operatorname{B}\equiv 0$; in this case,
by \eqref{nablaboch} the components
of $\nabla\operatorname{Riem}$ satisfy the equation
\begin{align} \label{nablabochsymm}
R_{pqrs,u}=-\dfrac{1}{2(n+2)}&\left[\del{ps}R_{qr,u}-
\del{pr}R_{qs,u}+\del{qr}R_{ps,u}-\del{qs}R_{pr,u}+ \right.\\
&\left.+R_{qt}(J_r^tJ_{s,u}^p+J_s^pJ_{r,u}^t)+J_s^pJ_r^tR_{qt,u}+\right.
\notag \\
&\left.-R_{qt}(J_s^tJ_{r,u}^p+J_r^pJ_{s,u}^t)-J_r^pJ_s^tR_{qt,u}+\right.
\notag\\
&\left.-2R_{rt}(J_s^tJ_{q,u}^p+J_q^pJ_{s,u}^t)-2J_q^pJ_s^tR_{rt,u}+\right.
\notag\\
&\left.+R_{pt}(J_s^tJ_{r,u}^q+J_r^qJ_{s,u}^t)+J_r^qJ_s^tR_{pt,u}+\right.
\notag\\
&\left.-R_{pt}(J_r^tJ_{s,u}^q+J_s^qJ_{r,u}^t)-J_s^qJ_r^tR_{pt,u}+\right.
\notag\\
&\left.-2R_{pt}(J_q^tJ_{s,u}^r+J_s^rJ_{q,u}^t)-2J_s^rJ_q^tR_{pt,u}\right]+
\notag\\
&+\dfrac{S_u}{4(n+1)(n+2)}\sq{\del{ps}\del{qr}-\del{pr}\del{qs}+
	J_s^pJ_r^q-J_r^pJ_s^q-2J_q^pJ_s^r}\notag\\
&+\dfrac{S}{4(n+1)(n+2)}[J_r^qJ_{s,u}^p+J_s^pJ_{r,u}^q-
J_s^qJ_{r,u}^p-J_r^pJ_{s,u}^q-2J_s^rJ_{q,u}^p-2J_q^pJ_{s,u}^r]. \notag
\end{align}
Before we state and prove the main result of this section, we need
the following
\begin{theorem} \label{complexlinricci}
	Let $(M,g)$ be a Riemannian four-manifold and $(Z,g_t,J)$ be its
	twistor space. Then, the Ricci tensor $\ol{\ricc}$ of $Z$ is
	complex linear, i.e.
	\begin{equation} \label{riccicommj}
	\ol{R}_{pt}J_q^t+\ol{R}_{qt}J_p^t=0 \mbox{ on } Z \mbox{ for every }
	p,q=1,...,6,
	\end{equation}
	if and only if $(M,g)$ is an Einstein,
	self-dual manifold.
\end{theorem}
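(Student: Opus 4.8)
The condition \eqref{riccicommj} is precisely the $J$-invariance of $\ol{\ricc}$, that is, $\ol{\ricc}(JX,JY)=\ol{\ricc}(X,Y)$, and the plan is to convert this tensorial identity into a finite system of algebraic equations in the curvature operator of $(M,g)$, exactly in the spirit of the previous section. Concretely, I would use the explicit local components $\ol{R}_{pq}$ of the Ricci tensor of $(Z,g_t)$ computed by moving frames in the adapted coframe (with indices $1,2,3,4$ horizontal and $5,6$ vertical), together with the explicit form of the Atiyah--Hitchin--Singer structure in that frame, namely $Je_1=e_2$, $Je_3=e_4$, $Je_5=e_6$ and $J^2=-\operatorname{Id}$. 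Substituting these into \eqref{riccicommj} and letting the pair $(p,q)$ range over the three possible index types, the system splits into purely vertical, mixed horizontal--vertical, and purely horizontal equations, each expressed through the blocks $A$, $B$, $C$, the scalar curvature $S$ and the parameter $t$ (with the mixed block additionally carrying covariant derivatives of the curvature, essentially $\delta W^+$).

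For the \emph{only if} implication I would argue as follows. The purely vertical equations ($p,q\in\{5,6\}$) reduce to $\ol{R}_{56}=0$ and $\ol{R}_{55}=\ol{R}_{66}$, which hold automatically since the fibre metric is round and is preserved by $J$. The decisive information comes from the horizontal and mixed equations, which couple the off-diagonal block $B$ with the trace-free part of the Weyl block governing the fibre. As in the proof of Theorem \ref{locconftwist}, \eqref{riccicommj} is a frame-independent, global condition, so I may evaluate it in a frame where $A$ is diagonal and then exploit the transformation laws \eqref{transfabc}: choosing $a_+=I_3$ moves $B$ while fixing $A$, and permuting the diagonal entries of $A$ via suitable $a_+\in SO(3)$ produces the remaining relations. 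Combining these should force $B=0$, so that $(M,g)$ is Einstein, and $A=\frac{S}{12}I_3$, so that $(M,g)$ is self-dual (consistently with the convention of Theorem \ref{locconftwist}); once self-duality is known, $W^+=0$ and the mixed equations are identically satisfied, imposing nothing further.

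The converse is a direct verification. Assuming $(M,g)$ Einstein and self-dual gives $B=0$, $A=\frac{S}{12}I_3$, and $S$ constant by Schur's lemma. Substituting into the explicit components $\ol{R}_{pq}$, all the terms built from $B$ and from $A-\frac{S}{12}I_3$ vanish, the horizontal Ricci becomes a scalar multiple of $\delta_{ab}$ on each $J$-invariant plane, and the mixed components vanish because $W^+=0$ (whence $\delta W^+=0$); hence every equation in \eqref{riccicommj} holds for all $t>0$. Equivalently, one knows from Atiyah--Hitchin--Singer that self-duality makes $J$ integrable, which is consistent with, though not by itself sufficient for, the $J$-invariance of $\ol{\ricc}$.

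The main obstacle is twofold. First, one needs the correct explicit expression for the Ricci components $\ol{R}_{pq}$, and in particular the mixed horizontal--vertical ones, which carry covariant derivatives of the self-dual Weyl tensor; keeping careful track of these is essential in order to see that \eqref{riccicommj} does not impose spurious differential constraints beyond Einstein and self-duality. Second, as in Theorem \ref{locconftwist}, the genuinely delicate step is to upgrade equations that a priori hold only in a fixed frame at a point to the full algebraic conclusions $B=0$ and $A=\frac{S}{12}I_3$; this requires the judicious use of the frame-independence of \eqref{riccicommj} together with the $SO(3)\times SO(3)$ action in \eqref{transfabc}.
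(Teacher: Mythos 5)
Your overall framework (explicit components in an adapted frame, frame-independence of \eqref{riccicommj}, and the $SO(3)\times SO(3)$ action \eqref{transfabc}) coincides with the paper's, and your route to the Einstein condition is sound: the horizontal equations $\ol{R}_{12}=\ol{R}_{34}=0$ together with changes of frame do force $B=0$. However, there is a genuine gap in how you propose to obtain self-duality. You discard the purely vertical equations as holding ``automatically since the fibre metric is round and is preserved by $J$''. This is false: by \eqref{ricctwist},
\begin{equation*}
\ol{R}_{55}-\ol{R}_{66}=\dfrac{t^2}{4}\pa{\abs{\qt{ab}}^2-\abs{\qq{ab}}^2},
\qquad
\ol{R}_{56}=\dfrac{t^2}{4}\sum_{a,c=1}^4\qt{ac}\qq{ac},
\end{equation*}
and both quantities depend on the curvature of the base, so they do not vanish for a general $(M,g)$. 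In the paper's proof, the equation $\ol{R}_{55}=\ol{R}_{66}$, which is exactly \eqref{riccicommj} for $(p,q)=(5,6)$, is the decisive input for self-duality: once $B=0$ is known, it becomes $(A_{12})^2+(A_{22})^2=(A_{13})^2+(A_{33})^2$, and frame changes then force $A$ to be a scalar matrix.

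Worse, the equations you retain cannot compensate for this loss. Once $(M,g)$ is Einstein (i.e.\ $B=0$), a direct check using \eqref{ricctwist} shows that $\sum_{c}\pa{\qt{ac}\qt{bc}+\qq{ac}\qq{bc}}=0$ for $a\neq b$ and that $\sum_{c}\sq{(\qt{ac})^2+(\qq{ac})^2}$ is independent of $a$; hence $\ol{R}_{ab}$ is proportional to $\delta_{ab}$ and \emph{all} horizontal equations in \eqref{riccicommj} are identically satisfied, whether or not $A$ is scalar. Likewise, the mixed equations only involve $\ol{R}_{a5}$ and $\ol{R}_{a6}$, which are the divergence terms $\qt{ac}_{,c}$, $\qq{ac}_{,c}$ and vanish for every Einstein metric by the second Bianchi identity (harmonic curvature), again with no constraint on the trace-free part of $A$. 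So the horizontal-plus-mixed system you rely on is satisfied by every Einstein four-manifold, self-dual or not, and your argument cannot conclude. The repair is simply to keep the vertical equation $\ol{R}_{55}=\ol{R}_{66}$ and run your frame-change argument on it, which is precisely what the paper does; your converse direction, by contrast, is correct as stated.
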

\begin{proof}
	If $(M,g)$ is an Einstein, self-dual manifold, the validity of \eqref{riccicommj}
	can be immediately shown by a direct inspection of the components
	listed in \eqref{ricctwist}.
	
	Thus, let us suppose that \eqref{riccicommj} holds on $Z$. First,
	note that
	\begin{align*}
		0&=\ol{R}_{1t}J_1^t+\ol{R}_{1t}J_1^t=2\ol{R}_{1t}J_1^t=\ol{R}_{12}\\
		0&=\ol{R}_{3t}J_3^t+\ol{R}_{3t}J_3^t=2\ol{R}_{3t}J_3^t=\ol{R}_{34}
	\end{align*}
on $Z$; by \eqref{ricctwist}, we can write
\begin{align*}
	R_{12}&=\dfrac{t^2}{2}(\qt{1c}\qt{2c}+\qq{1c}\qq{2c})\\
	R_{34}&=\dfrac{t^2}{2}(\qt{3c}\qt{4c}+\qq{3c}\qq{4c}).
\end{align*}
Subtracting the first equation from the second, we obtain
\[
B_{32}=t^2(B_{22}B_{32}+B_{23}B_{33});
\]
note that this is a global condition on the entries of the matrix $B$,
which means that it holds for every choice of local orthonormal frame.
By \eqref{transfabc}, we can choose
\[
a_-=\pmat{
1 & 0 & 0\\
0 & 0 & 1\\
0 & -1 & 0
},
\qquad
a_+=I_3,
\]
where $I_3$ is the $3\times3$ identity matrix, to compute
\[
\tilde{B}=\pmat{
B_{11} & B_{12} & B_{13}\\
-B_{31} & -B_{32} & -B_{33}\\
B_{21} & B_{22} & B_{23}
}
\]
and to obtain
\[
B_{22}=\tilde{B}_{32}=t^2(\tilde{B}_{22}\tilde{B}_{32}+
\tilde{B}_{23}\tilde{B}_{33})=
-t^2(B_{32}B_{22}+B_{33}B_{23})=-B_{32}.
\]
Now, choosing
\[
a_-=\pmat{
	-1 & 0 & 0\\
	0 & -1 & 0\\
	0 & 0 & 1
},
\qquad
a_+=I_3,
\]
we get
\[
\tilde{B}=\pmat{
	-B_{11} & -B_{12} & -B_{13}\\
	-B_{21} & -B_{22} & -B_{23}\\
	B_{31} & B_{32} & B_{33}
}
\]
and
\[
B_{32}=\tilde{B}_{32}=t^2(\tilde{B}_{22}\tilde{B}_{32}+
\tilde{B}_{23}\tilde{B}_{33})=
-t^2(B_{32}B_{22}+B_{33}B_{23})=-B_{32};
\]
therefore, we conclude that $B_{22}=B_{32}=0$.

Now, we can choose the change of frames determined by
\[
a_-=I_3, \quad
a_+=\pmat{
1 & 0 & 0\\
0 & 0 & -1\\
0 & 1 & 0
}\quad \mbox{ and } \quad
a_-=\pmat{
1 & 0 & 0\\
0 & 0 & 1\\
0 & -1 & 0
}, \quad
a_+=\pmat{
	1 & 0 & 0\\
	0 & 0 & -1\\
	0 & 1 & 0
}
\]
to compute
\begin{align*}
	B_{33}=t^2B_{23}B_{33}=-B_{23},
\end{align*}
which obviously implies that $B_{23}=B_{33}=0$. By an analogous computations,
we can also obtain $B_{21}=B_{31}=0$. Finally, choosing
\[
a_-=\pmat{
0 & 0 & 1\\
0 & 1 & 0\\
-1 & 0 & 0
}, \qquad
a_+=I_3
\]
and repeating the argument above, we conclude that $B_{11}=B_{12}=B_{13}=0$.
Thus, for every $p\in M$ there exists a local frame such that $B=0$, i.e.
$(M,g)$ is an Einstein manifold.

Now, by \eqref{riccicommj}, we obtain
\[
0=\ol{R}_{5t}J_6^t+\ol{R}_{6t}J_5^t\Longra \ol{R}_{55}=\ol{R}_{66}
\Longra\abs{\qt{ab}}^2=\abs{\qq{ab}}^2;
\]
since $(M,g)$ is an Einstein manifold, this equation can be rewritten as
\[
(A_{12})^2+(A_{22})^2=(A_{13})^2+(A_{33})^2,
\]
which is another global condition and, therefore, does not depend
on the choice of the local frame. In particular, for every frame with
respect to which $A$ is diagonal, we have that $(A_{22})^2=(A_{33})^2$ and,
since we can exchange the diagonal entries of $A$, we can conclude
\[
(A_{11})^2=(A_{22})^2=(A_{33})^2.
\]
If $A_{11}=A_{22}=A_{33}$, then this holds for every local orthonormal frame
by \eqref{transfabc}, hence $(M,g)$ is self-dual and the claim is proven.

Thus, without loss of generality, we may suppose that,
for instance, $A_{22}\neq A_{33}$ at a point
$p\in M$ for some local frame $e\in O(M)_-$ with respect to which
$A$ is diagonal. By the equation above,
it is obvious that the diagonal entries satisfy
\[
A_{11}=A_{22}=-A_{33}=x\neq 0;
\]
if we choose the change of frames determined by
\[
a_+=\pmat{
0 & -1 & 0\\
\frac{1}{\sqrt{2}} & 0 & -\frac{1}{\sqrt{2}}\\
\frac{1}{\sqrt{2}} & 0 & \frac{1}{\sqrt{2}}
},
\]
we obtain
\[
\tilde{A}=\pmat{
0 & 0 & -\frac{x}{2}\\
0 & x & 0\\
-\frac{x}{2} & 0 & 0
}
\]
and
\[
(\tilde{A}_{12})^2+(\tilde{A}_{22})^2=(\tilde{A}_{13})^2+(\tilde{A}_{33})^2
\Longra x^2=\dfrac{x^2}{4},
\]
which is true if and only if $x=0$ and leads to a contradiction.
Therefore, diagonalizing $A$ we obtain a scalar matrix on $M$, which is
equivalent to say that $(M,g)$ is a self-dual manifold.
\end{proof}
We are now ready to state the following
\begin{theorem} \label{nablabochflat}
Let $(M,g)$ be a Riemannian four-manifold and $(Z,g_t,J)$ be its
twistor space. Then, $(Z,g_t,J)$ is Bochner-parallel if and only
if $(M,g)$ is homotetically isometric to $\mathds{S}^4$
with its canonical metric. In particular, the only Bochner-parallel
twistor space is $\cp^3$ endowed with the Fubini-Study metric.
\end{theorem}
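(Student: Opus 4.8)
The plan is to dispatch the two implications separately. The reverse implication is the quick one: if $(M,g)$ is homothetically isometric to $\mathds{S}^4$ with its canonical metric, then, for the appropriate value of the homothety constant (the one for which the scalar curvature equals $12/t^2$), the twistor space $(Z,g_t,J)$ is K\"{a}hler--Einstein and is isometric to $\cp^3$ with the Fubini--Study metric. Proposition \ref{bochflatkahl} then gives $\overline{\operatorname{B}}\equiv 0$, and a fortiori $\nabla\overline{\operatorname{B}}\equiv 0$, so that $(Z,g_t,J)$ is Bochner-parallel. The whole content of the theorem therefore lies in the forward implication.

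For the forward implication I would work in two stages. In the first stage the goal is to show that the Bochner-parallel hypothesis already forces the Ricci tensor of $Z$ to be complex linear, i.e. to satisfy \eqref{riccicommj}. Starting from the identity \eqref{nablabochsymm}, which holds componentwise because $\nabla\overline{\operatorname{B}}\equiv 0$, I would specialize the free indices so as to single out the fibre directions $5,6$ and substitute the explicit local expressions for the components $J^p_{q,u}$ of $\nabla J$ on the twistor space; taking the appropriate contractions (equivalently, using that the divergence of $\overline{\operatorname{B}}$ vanishes) should collapse the resulting system to the commutation relation $\overline{R}_{pt}J^t_q+\overline{R}_{qt}J^t_p=0$. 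Once \eqref{riccicommj} is established, Theorem \ref{complexlinricci} immediately yields that $(M,g)$ is Einstein and self-dual.

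In the second stage I would exploit this structural information to pin down $M$ completely. In the frame convention used throughout the paper, self-duality means $A=\tfrac{S}{12}I_3$ while the Einstein condition means $B=0$, so the curvature operator of $(M,g)$, and with it the full curvature of $Z$ and the components of $\nabla J$, becomes explicit and can be fed back into \eqref{nablabochsymm}. I expect the surviving equations to force the remaining block $C$ to be scalar as well, $C=\tfrac{S}{12}I_3$, and to normalize the scalar curvature to the positive value $S=12/t^2$; at that point the Weyl tensor of $(M,g)$ vanishes identically, so $(M,g)$ is Einstein and conformally flat, hence a space form of constant positive curvature, i.e. homothetically isometric to $\mathds{S}^4$, while $(Z,g_t,J)$ becomes K\"{a}hler--Einstein and is identified with $\cp^3$ with the Fubini--Study metric (Proposition \ref{bochflatkahl}). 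The competing self-dual Einstein model $\cp^2$ is ruled out precisely here, since its block $C$ carries a nonzero part of the Weyl tensor and cannot be made scalar. The main obstacle is the first stage: as long as self-duality has not been established, $J$ is only an almost complex structure, the many $\nabla J$ terms in \eqref{nablabochsymm} do not vanish, and they must be controlled through the precise twistor expressions for $\nabla J$ together with systematic changes of frame via \eqref{transfabc}; extracting the clean relation \eqref{riccicommj} from this tangle is where the real work lies, with the curvature bookkeeping of the second stage a close second.
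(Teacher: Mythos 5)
Your outline reproduces the paper's skeleton — the easy direction via Proposition \ref{bochflatkahl}, and for the forward direction a reduction to Theorem \ref{complexlinricci} followed by a normalization of the scalar curvature and the K\"{a}hler--Einstein machinery — but your first stage has a genuine gap. The relation \eqref{riccicommj} is not extracted in the paper by contracting \eqref{nablabochsymm}, and replacing $\nabla\ol{\operatorname{B}}\equiv 0$ by $\diver\ol{\operatorname{B}}\equiv 0$ is a real loss of information: the paper itself notes later that \emph{every} K\"{a}hler--Einstein twistor space satisfies $\diver\ol{\operatorname{B}}\equiv 0$ while $\nabla\ol{\operatorname{B}}\neq 0$ unless $M=\mathds{S}^4$, so the divergence condition cannot be the engine of the argument. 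What the paper actually does is skew-symmetrize \eqref{nablabochsymm} in $p,q$, obtaining
\[
0=\ol{R}_{pqrs,u}+\ol{R}_{qprs,u}=\dfrac{1}{5}\sq{J_{s,u}^r\pa{\ol{R}_{pt}J_q^t+\ol{R}_{qt}J_p^t}+J_s^r\pa{\ol{R}_{pt}J_{q,u}^t+\ol{R}_{qt}J_{p,u}^t+J_q^t\ol{R}_{pt,u}+J_p^t\ol{R}_{qt,u}}},
\]
and then choose $(r,s)$ with $J_s^r=0$, which isolates $J_{s,u}^r\pa{\ol{R}_{pt}J_q^t+\ol{R}_{qt}J_p^t}=0$. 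This yields \eqref{riccicommj} only at frames where some such $J_{s,u}^r\neq 0$. The degenerate alternative — a frame in which $J_{s,u}^r=0$ for all $r,s,u$ — is a genuine logical possibility and must be treated separately: the paper uses the explicit formulas \eqref{nablajplus} to show that vanishing of $\nabla J$ forces $\qt{13}=\qt{42}=\qq{14}=\qq{23}=1/t^2$ together with the vanishing of the remaining curvature sums, whence $\ol{R}_{12}=\ol{R}_{34}=0$ and $\ol{R}_{55}=\ol{R}_{66}$, which are precisely the identities needed to rerun the proof of Theorem \ref{complexlinricci}. Your expectation that the system "collapses" to \eqref{riccicommj} outright would stall exactly at this case split.

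Your second stage also departs from what is actually carried out, leaving real work unaccounted for. The paper never proves $C=\frac{S}{12}I_3$ directly from the parallel-Bochner equations: once Einstein and self-dual are known, it compares the component $\ol{R}_{1556,4}$ (computed from \eqref{nablabochsymm} and \eqref{riemtwist}) with $\ol{R}_{16,4}$ (from \eqref{nablaricc}) to force $S\in\{0,12/t^2\}$, excludes $S=0$ because $0=\ol{R}_{1334,6}=\frac{\ol{S}}{40}J_{3,6}^1$ would contradict $J_{3,6}^1=1/t\neq 0$, and only then — via Corollary \ref{kahlertwist}, the reduction \eqref{bochkahltwist} of Bochner-parallel to Bochner-flat in the K\"{a}hler--Einstein case, and Proposition \ref{bochflatkahl} — does the constant-curvature conclusion appear; that is where $C$ becomes scalar and $\cp^2$ is excluded, not before. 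Attempting, as you propose, to make $C$ scalar \emph{before} pinning down $S$ means confronting all the nonvanishing $\nabla J$ terms simultaneously, and nothing in your sketch indicates how those equations close. In short, the architecture is the right one, but both load-bearing steps are asserted rather than proved, and the first, as formulated, is not correct without the additional case analysis.
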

An immediate consequence of Theorem \ref{nablabochflat} is
the following
\begin{cor} \label{bochflat}
$(Z,g_t,J)$ is Bochner-flat if and only if $(M,g)$ is homotetically isometric
to $\mathds{S}^4$ with its canonical metric.
\end{cor}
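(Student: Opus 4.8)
The plan is to reduce the corollary to the parallel case, since I may invoke Theorem~\ref{nablabochflat}. For the direct implication, if $(Z,g_t,J)$ is Bochner-flat, i.e. $\ol{\operatorname{B}}\equiv 0$, then trivially $\nabla\ol{\operatorname{B}}\equiv 0$, so $(Z,g_t,J)$ is Bochner-parallel and Theorem~\ref{nablabochflat} gives that $(M,g)$ is homothetically isometric to $\mathds{S}^4$. For the converse, assume $(M,g)$ is homothetically isometric to $\mathds{S}^4$. Since the pair $(g,t)$ may be jointly rescaled without changing $(Z,g_t,J)$ up to homothety, and Bochner-flatness is a scale-invariant condition (the $(0,4)$ Bochner tensor scales like $\operatorname{Riem}$), I may pass to the representative whose scalar curvature equals $S=12/t^2$. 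For that representative the base is Einstein and self-dual with $S=12/t^2$, hence $(Z,g_t,J)$ is Kähler-Einstein by Corollary~\ref{kahlertwist}, and Proposition~\ref{bochflatkahl} then yields $\ol{\operatorname{B}}\equiv 0$; the model is $\cp^3$ with the Fubini-Study metric.

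Thus all of the content sits in Theorem~\ref{nablabochflat}, whose proof I would organize in two stages. The first stage is to show that $\nabla\ol{\operatorname{B}}\equiv 0$ forces the Ricci tensor $\ol{\ricc}$ of $Z$ to be complex linear: contracting the Bochner-parallel identity \eqref{nablabochsymm} over a suitable index pair, and using that on a twistor space the only nonzero components of $\nabla J$ couple the horizontal directions to the fiber directions $5,6$, one isolates the $J$-anti-invariant part of $\nabla\ol{\ricc}$ and, by evaluating along the compact fibers, recovers the algebraic relation \eqref{riccicommj}. By Theorem~\ref{complexlinricci} this means $(M,g)$ is Einstein and self-dual, so in any frame $B=0$ and $A$ is scalar, and the curvature of $(Z,g_t,J)$ becomes explicit.

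The second stage is to upgrade Einstein self-dual to a positively curved space form. Feeding $B=0$ and $A$ scalar back into the remaining parallel equations $\ol{B}_{pqrs,u}=0$ carrying a fiber index $u\in\{5,6\}$ — the components that measure the variation of the curvature along the twistor fiber — I expect them to vanish only when the self-dual Weyl tensor $W^+$ of the base vanishes and simultaneously $S=12/t^2$; together with self-duality this gives $W\equiv 0$, so $(M,g)$ is an Einstein four-manifold of constant curvature $\kappa=1/t^2>0$ and $(Z,g_t,J)$ is Kähler-Einstein, whence Proposition~\ref{bochflatkahl} identifies $(M,g)$ with $\mathds{S}^4$ and $Z$ with $\cp^3$. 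The main obstacle is precisely this second stage: excluding the self-dual, Einstein, but non-space-form base — geometrically $\cp^2$, whose twistor space $SU(3)/T^2$ is Kähler-Einstein yet not Bochner-flat — which amounts to showing that its fiber-direction Bochner derivatives $\ol{B}_{pqrs,u}$ cannot all vanish unless $W^+\equiv 0$. This is the computational heart of the argument, and it is where the explicit list of curvature components of $(Z,g_t)$ from the appendices is needed.
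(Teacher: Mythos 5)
Your overall architecture is the paper's: the corollary is deduced from Theorem \ref{nablabochflat}, the forward implication being the trivial ``flat $\Rightarrow$ parallel'', and the converse going through Corollary \ref{kahlertwist} and Proposition \ref{bochflatkahl}; this is exactly how the paper presents it (``an immediate consequence''). The forward direction is correct as you state it. The genuine gap is the normalization step in your converse. The rescaling of the data that changes $g_t$ by a homothety is $(g,t)\mapsto(c^2g,\,ct)$: the fibre metric and the Levi-Civita horizontal distribution are insensitive to a constant rescaling of $g$, so $\pi^*(c^2g)+(ct)^2\langle\,,\rangle_{\mathrm{fib}}=c^2\bigl(\pi^*g+t^2\langle\,,\rangle_{\mathrm{fib}}\bigr)$. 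Precisely because of this, the dimensionless quantity $St^2$ is \emph{invariant} under that rescaling: $(S/c^2)(ct)^2=St^2$. Consequently you cannot ``pass to the representative whose scalar curvature equals $12/t^2$'': the condition $St^2=12$ that you are trying to arrange is itself a rescaling invariant, so your trick is available exactly when it is not needed. Nor can this step be repaired by a different argument: if $(M,g)$ is a round sphere of curvature $\kappa\neq 1/t^2$, then by Theorem \ref{einstwist} $(Z,g_t)$ is not even Einstein unless $\kappa\in\{1/(2t^2),1/t^2\}$, hence not K\"{a}hler--Einstein, and by your own forward implication it is not Bochner-flat. The twistor metrics over round spheres with different values of $\kappa t^2$ are genuinely non-homothetic metrics on $\cp^3$, so the converse, read literally for an arbitrary fixed $t$, is false.

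The converse therefore only holds under the reading the paper intends (and which Proposition \ref{bochflatkahl} silently uses): the homothety factor is tied to $t$, i.e. $(M,g)$ has constant sectional curvature exactly $1/t^2$, equivalently $S=12/t^2$. With that reading no rescaling is needed: Corollary \ref{kahlertwist} gives that $(Z,g_t,J)$ is K\"{a}hler--Einstein and Proposition \ref{bochflatkahl} gives $\ol{\operatorname{B}}\equiv 0$, the model being $\cp^3$ with the Fubini--Study metric. Finally, your two-stage sketch of Theorem \ref{nablabochflat} is superfluous once the theorem may be invoked; for what it is worth, it runs parallel to the paper's actual proof (complex linearity of $\ol{\ricc}$ extracted from the symmetries forced by \eqref{nablabochsymm}, then Theorem \ref{complexlinricci}, then fibre-direction components of the parallel identity), except that the paper pins down $S\in\{0,12/t^2\}$ and excludes $S=0$ by explicit component computations --- and must separately handle the degenerate case of a frame in which all $J^r_{s,u}$ vanish --- rather than deducing the vanishing of the remaining half of the Weyl tensor directly.
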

\begin{proof}[Proof of Theorem \ref{nablabochflat}]
First, note that, in order to prove the claim, it is sufficient
to show that, if $(Z,g_t,J)$ is Bochner-parallel, then it is
a K\"{a}hler-Einstein manifold, i.e. $(M,g)$ is an Einstein,
self-dual manifold with scalar curvature $S$ equal to
$12/t^2$: indeed,
by \eqref{bochkahltwist}, it is immediate to show that, if
$(Z,g_t,J)$ is a K\"{a}hler-Einstein manifold, then it is Bochner-parallel
if and only if it is Bochner-flat (it is sufficient to check the
components $\ol{B}_{pqr5,t}$); therefore, this ends the proof by
Proposition \ref{bochflatkahl}.

Let us consider the local expression of
$\nabla\operatorname{B}$ in \eqref{nablabochsymm}:
the right-hand side satisfies the same symmetries as the
components of $\nabla\operatorname{Riem}$. Thus,
recalling that the Riemann tensor is skew-symmetric with
respect to the last two indices, we obtain
\[
0=\ol{R}_{pqrs,u}+\ol{R}_{qprs,u}=
\dfrac{1}{5}[J_{s,u}^r(\ol{R}_{pt}J_q^t+\ol{R}_{qt}J_p^t)+
J_s^r(\ol{R}_{pt}J_{q,u}^t+\ol{R}_{qt}J_{p,u}^t+
J_q^t\ol{R}_{pt,u}+J_p^t\ol{R}_{qt,u})]
\]
for every $p,q=1,...,6$.

If we consider a pair of indices $(r,s)$ such that
$J_s^r=0$, the equation becomes
\[
J_{s,u}^r(\ol{R}_{pt}J_q^t+\ol{R}_{qt}J_p^t)=0.
\]
If $J_{s,u}^r\neq 0$ for some $r,s,u$ and for every local frame $e$, we
conclude that $\ol{R}_{pt}J_q^t+\ol{R}_{qt}J_p^t=0$ for
every $p,q$.
On the other hand, if $e'$ is a frame with respect to which $J_{s,u}^r=0$
for every $r,s,u$, we obtain
\begin{align*}
	\qt{13}&=\qt{42}=\qq{14}=\qq{23}=\dfrac{1}{t^2}\\
	\qt{12}&=\qt{34}=\qq{12}=\qq{34}=\qt{14}=
	\qt{23}=\qq{13}=\qq{42}=0,
\end{align*}
which obviously imply that
\[
\ol{R}_{12}=\ol{R}_{34}=0 \mbox { and } \ol{R}_{55}=\ol{R}_{66}
\]
for the chosen frame.
Thus, these equations hold on $Z$ for every choice of
$e\in O(M)$: therefore, we can repeat the argument exploited in the proof of
Theorem \ref{complexlinricci} to conclude that
$(M,g)$ is an Einstein, self-dual manifold.

Now, by \eqref{nablabochsymm}, \eqref{riemtwist} and the
local expression of $\nabla\ol{\operatorname{Riem}}$,
it is easy to compute, for instance,
\[
-\dfrac{2}{5}\ol{R}_{16,4}=
\ol{R}_{1556,4}=\dfrac{t^3S}{6912}
\pa{S^2-\dfrac{36S}{t^2}+\dfrac{288}{t^4}};
\]
since by \eqref{nablaricc}
\[
\ol{R}_{16,4}=-\dfrac{t^3S}{1728}
\pa{S^2-\dfrac{18S}{t^2}+\dfrac{72}{t^4}},
\]
we have the equality
\[
\dfrac{S}{6912}
\pa{S-\dfrac{12}{t^2}}\pa{S-\dfrac{24}{t^2}}=
\dfrac{S}{4320}
\pa{S-\dfrac{12}{t^2}}\pa{S-\dfrac{6}{t^2}}
\]
and it follows immediately that this holds if and only if
$S\in\{0,12/t^2\}$.

If we suppose $S=0$, by \eqref{riemtwist}
and \eqref{nablabochsymm}
it is easy to compute
\[
0=\ol{R}_{1334,6}=\dfrac{\ol{S}}{40}J_{3,6}^1=\dfrac{1}{20t^2}J_{3,6}^1
\]
which is impossible, since
\[
J_{3,6}^1=\dfrac{1}{2}t\sq{\dfrac{2}{t^2}-(\qq{14}+\qq{23})}=\dfrac{1}{t}
\neq 0.
\]
Thus, by Corollary \ref{kahlertwist} we conclude that
$(Z,g_t,J)$ is a K\"{a}hler-Einstein manifold and this ends the proof.

\end{proof}

\begin{rem} \label{bochnsymm}
\begin{enumerate}
\item It is worth to note that Theorem
\ref{complexlinricci} is a generalization
of a result due to Davidov, Grantcharov and Mu\v{s}karov, who showed that
the Riemann tensor $\ol{\operatorname{Riem}}$ of $(Z,g_t,J)$ satisfies
\begin{equation} \label{ah3identity}
	\ol{R}_{pqrs}=\ol{R}_{tuvw}J_p^tJ_q^uJ_r^vJ_s^w,
	\mbox{ } \forall 1\leq 	
	p,q,r,s\leq 6
\end{equation}
if and only if $(M,g)$ is an Einstein, self-dual manifold (see \cite{davmuscurv}).
Indeed,
a straightforward computation shows that \eqref{ah3identity} implies
\eqref{riccicommj}. Almost Hermitian manifolds which satisfy
\eqref{ah3identity} are sometimes called \emph{RK-manifolds}
see (\cite{vanhecke2} and \cite{vanyano}) and \eqref{ah3identity}
is a condition satisfied by every nearly K\"{a}hler manifold
(see \cite{graycurv}).

\item We point out that one can directly prove Corollary
\ref{bochflat} without exploiting Theorem \ref{nablabochflat};
indeed, if we suppose that $(Z,g_t,J)$ is a Bochner-flat manifold,
by \eqref{symmboch} we can show that $\ol{\operatorname{Riem}}$ is
a $K$-\emph{curvature-like tensor}, i.e. its components satisfy
\begin{equation} \label{kahlidcomp}
	\ol{R}_{pqts}J_r^t+\ol{R}_{pqrt}J_s^t=0,
	\mbox{ } \forall 1\leq 	
	p,q,r,s\leq 6.
\end{equation}
The equality in
\eqref{kahlidcomp} is sometimes referred to as \emph{K\"{a}hler identity}
and it is a deeply studied feature of almost Hermitian manifolds,
aside from the twistor spaces context (we may refer the reader to
\cite{graycurv}, \cite{sawaki}, \cite{sawaseki},
\cite{vanhecke} and \cite{vanyano}).

By another result due to Davidov, Grantcharov and Mu\v{s}karov
\cite{davmuscurv},
$\ol{\operatorname{Riem}}$ satisfies \eqref{kahlidcomp} if and only if
$(M,g)$ is an Einstein, self-dual manifold with $S\in\{0,12/t^2\}$
If $S=0$, by
\eqref{bochneralmherm}, \eqref{riemtwist}, \eqref{ricctwist} and
\eqref{scaltwist}, we obtain
\begin{align*}
	0=\ol{B}_{5656}=
	\dfrac{S}{20}+\dfrac{3}{10t^2}-\dfrac{9t^2}{40}\abs{\qt{ab}}^2=
	\dfrac{3}{10t^2},
\end{align*}
which is a contradiction. Thus, $(Z,g_t,J)$ is
K\"{a}hler-Einstein and, by Proposition \ref{bochflatkahl},
the claim is proved. For detailed dissertations about
K\"{a}hler, Bochner-flat
manifolds, see, for instance,
\cite{bryant}, \cite{chen}, \cite{chendillen} and \cite{chenyano}.
\end{enumerate}
\end{rem}

\section{Ricci parallel and locally symmetric twistor spaces} \label{kahleinst}

In this section, we discuss the case of a Riemannian four-manifold $(M,g)$
whose twistor space $(Z,g_t,J)$ is a K\"{a}hler-Einstein manifold.

Let us start proving a well-known result due to Friedrich and Grunewald
(see \cite{frigru}):
\begin{theorem} \label{einstwist}
Let $(M,g)$ a Riemannian four-manifold and $(Z,g_t)$ be its twistor
space. Then $(Z,g_t)$ is Einstein if and only if $(M,g)$ is Einstein,
self-dual with scalar curvature
$S$ equal to $6/t^2$ or to $12/t^2$.
\end{theorem}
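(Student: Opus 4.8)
The plan is to prove both implications directly from the explicit local components of the Ricci tensor $\ol{\ricc}$ of $(Z,g_t)$ recorded in \eqref{ricctwist}, organizing the orthonormal frame into the four horizontal indices $a,b,\dots\in\{1,2,3,4\}$ coming from $M$ and the two vertical indices $5,6$ tangent to the fibre $S^2$. Being Einstein, $\ol{\ricc}=\lambda g_t$ for a constant $\lambda$, means precisely that all off-diagonal components $\ol{R}_{pq}$ ($p\neq q$) and all mixed components $\ol{R}_{a5},\ol{R}_{a6}$ vanish, that $\ol{R}_{11}=\dots=\ol{R}_{44}$ and $\ol{R}_{55}=\ol{R}_{66}$, and that the common horizontal value equals the common vertical value. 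The key simplification is that the first, qualitative, part of this condition can be handed off to Theorem \ref{complexlinricci}.

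For necessity, suppose $(Z,g_t)$ is Einstein. Since $J$ is $g_t$-orthogonal with $J^2=-\id$, in an orthonormal frame it is skew-symmetric, $J_q^p=-J_p^q$, so the round tensor $\lambda g_t$ automatically verifies $\ol{R}_{pt}J_q^t+\ol{R}_{qt}J_p^t=\lambda(J_q^p+J_p^q)=0$; that is, $\ol{\ricc}$ is complex linear. Theorem \ref{complexlinricci} then yields at once that $(M,g)$ is Einstein and self-dual, and it only remains to identify $S$. Under these two reductions one has $B=0$ and, in the negatively oriented frames relevant to $Z_-$, $A=\tfrac{S}{12}I_3$, so that the quantities defined in \eqref{riemq} collapse exactly as in the proof of Theorem \ref{nablabochflat} (only $\qt{13},\qt{42},\qq{14},\qq{23}$ survive, each equal to $\tfrac{S}{12}$, and in particular no $C$-entry ever enters). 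Plugging this into \eqref{ricctwist} reduces the Ricci tensor to two scalars $\ol{R}_{11}$ and $\ol{R}_{55}$, each an explicit function of $S$ and $t$, and the one surviving Einstein equation $\ol{R}_{11}=\ol{R}_{55}$ is expected to factor through the same quadratic that appears in the proof of Theorem \ref{nablabochflat}, namely a nonzero multiple of $S^2-\tfrac{18}{t^2}S+\tfrac{72}{t^4}=(S-\tfrac{6}{t^2})(S-\tfrac{12}{t^2})$; since $t>0$ this forces $S\in\{6/t^2,\,12/t^2\}$.

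For sufficiency, assume conversely that $(M,g)$ is Einstein and self-dual with $S=6/t^2$ or $S=12/t^2$. Substituting $B=0$ and $A=\tfrac{S}{12}I_3$ into \eqref{ricctwist} (again using the collapse of the quantities in \eqref{riemq}), a direct inspection should show that every off-diagonal and mixed component of $\ol{\ricc}$ vanishes and that $\ol{R}_{aa}$ is independent of $a$ while $\ol{R}_{55}=\ol{R}_{66}$; the prescribed values of $S$ are then exactly the two roots of the quadratic above making $\ol{R}_{11}=\ol{R}_{55}$. Hence $\ol{\ricc}=\lambda g_t$ and $(Z,g_t)$ is Einstein, which closes the equivalence.

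The conceptual heart, that ``Einstein on $Z$'' already forces ``Einstein and self-dual on $M$'', is entirely absorbed by Theorem \ref{complexlinricci} through the complex-linearity remark, so the main obstacle is purely computational: one must extract from the long list \eqref{ricctwist} the closed forms of the horizontal eigenvalue $\ol{R}_{11}$ and the vertical eigenvalue $\ol{R}_{55}$ after imposing $B=0$ and $A=\tfrac{S}{12}I_3$, and check both that their difference is proportional to $(S-6/t^2)(S-12/t^2)$ and that no off-diagonal component survives the reduction. Keeping careful track of the $t$- and $S$-scaling in these components is the only delicate point; everything else is bookkeeping.
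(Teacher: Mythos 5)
Your proof is correct, and its ``only if'' half takes a genuinely different route from the paper's. The paper proves necessity from scratch: writing $\ol{R}_{pq}=(\ol{S}/6)\delta_{pq}$, it first deduces $\abs{\qt{ab}}^2=\abs{\qq{ab}}^2=\frac{S}{2t^2}-\frac{2}{t^4}$ from $\ol{R}_{55}=\ol{R}_{66}$, then shows $B=0$ by combining the horizontal components of \eqref{ricctwist} with rotations via \eqref{transfabc}, then extracts self-duality from $\ol{R}_{56}=0$ by further frame changes, and only at the end reaches the quadratic $t^4S^2-18t^2S+72=0$. You instead observe that $\ol{\ricc}=\lambda g_t$ automatically commutes with the skew-adjoint $J$, so \eqref{riccicommj} holds, and you delegate all the frame-rotation work to Theorem \ref{complexlinricci}. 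This is legitimate and non-circular: that theorem is proved in Section 3 by a self-contained argument which nowhere invokes Theorem \ref{einstwist}. Your route buys brevity and a conceptual point (the Einstein condition is a special instance of complex linearity of the Ricci tensor); the paper's route keeps Section 4 independent of Section 3 and produces the intermediate identities explicitly. Your closing computation is also right: with $B=0$ and $A=\frac{S}{12}I_3$ one gets $\ol{R}_{11}=\frac{S}{4}-\frac{t^2S^2}{144}$ and $\ol{R}_{55}=\frac{1}{t^2}+\frac{t^2S^2}{144}$, and $\ol{R}_{11}=\ol{R}_{55}$ is precisely $S^2-\frac{18}{t^2}S+\frac{72}{t^4}=\pa{S-\frac{6}{t^2}}\pa{S-\frac{12}{t^2}}=0$.

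Two small repairs are needed. In the sufficiency direction the mixed components $\ol{R}_{a5}=\frac{t}{2}\qt{ac}_{,c}$ and $\ol{R}_{a6}=\frac{t}{2}\qq{ac}_{,c}$ are divergences of the curvature, so they do not vanish by pointwise substitution of $B=0$, $A=\frac{S}{12}I_3$: as the paper does, you must use that an Einstein metric has harmonic curvature ($\diver\operatorname{Riem}=0$, by the contracted second Bianchi identity), which gives $\qt{ac}_{,c}=\qq{ac}_{,c}=0$. Also, under the Einstein, self-dual reduction the surviving quantities of \eqref{riemq} are $\qt{13}=\qt{42}=\qq{14}=\qq{23}=\qd{12}=\qd{34}=\frac{S}{12}$, not only the four you list; this is harmless here, because $\qd{ab}$ never enters \eqref{ricctwist}, but it would matter if you inspected \eqref{riemtwist}.
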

\begin{proof}
Let us suppose that $M$ is Einstein, self-dual with
$S\in\{\frac{6}{t^2},\frac{12}{t^2}\}$. We obtain immediately
that $\ol{R}_{a5}=\ol{R}_{a6}=0$: indeed, since $M$ is Einstein,
then $(M,g)$ has a harmonic curvature metric, i.e.
$\diver\operatorname{Riem}=0$, which means that,
in particular, $\qt{ac}_{,c}=\qq{ac}_{,c}=0$ for every $a=1,...,4$.
We have that $M$ is Einstein and self-dual if and only if
\[
\begin{cases}
	\qd{13}=\qd{14}=\qd{23}=\qd{42}=0;\\
	\qt{12}=\qt{14}=\qt{23}=\qt{34}=0;\\
	\qq{12}=\qq{13}=\qq{42}=\qq{34}=0;\\
	\qt{13}=\qt{42}=\qq{14}=\qq{23}=\qd{12}=\qd{34}=\dfrac{S}{12}.
\end{cases}.
\]
Therefore, for instance we have that
\[
\overline{R}_{12}=-\frac{1}{2}t^2\sum_{c=1}^4({\qt{1c}\qt{2c}+\qq{1c}\qq{2c}})=0;
\]
with similar computations, we obtain that $\overline{R}_{ab}=0$ for every
$a\neq b$. Obviously, the system also implies that
$\overline{R}_{56}=0$. Now, let us consider
\begin{align*}
	\overline{R}_{11}&=\dfrac{S}{4}-\frac{1}{2}t^2\sum_{c=1}^4({\qt{1c}\qt{1c}+\qq{1c}\qq{1c}})=\\
	&=\dfrac{S}{4}-\frac{1}{2}t^2((\qt{13})^2+(\qq{14})^2)=\dfrac{S}{4}-t^2\dfrac{S^2}{144};
\end{align*}
we have that
\[
\overline{S}=S+\dfrac{2}{t^2}-t^2\dfrac{S^2}{72}
\]
Thus, we have
\[
\overline{R}_{11}=\dfrac{\overline{S}}{6}\Leftrightarrow
t^2S^2-18S+\dfrac{72}{t^2}=0\Leftrightarrow
S\in\left\{\dfrac{6}{t^2},\dfrac{12}{t^2}\right\},
\]
i.e. $\overline{R}_{11}=\overline{S}/6$ by hypothesis. With analogous
computations, we conclude that $\overline{R}_{aa}=\overline{S}/6$ for every $a$.
Finally,
\[
\overline{R}_{55}=\overline{R}_{66}=\dfrac{1}{t^2}+t^2\dfrac{S^2}{144}.
\]
Again, the right-hand side is equal to $\overline{S}/6$
if and only if $S$ is equal to $6/t^2$ or to $12/t^2$, which implies
that $Z$ is Einstein.

Conversely, let us suppose that $(Z,g_t)$ is an Einstein manifold, i.e.
$\ol{R}_{pq}=(\ol{S}/6)\delta_{pq}$. Recalling the expressions of $\ol{\operatorname{Riem}}$, $\ol{\ricc}$ and $\ol{\operatorname{S}}$
listed in \eqref{riemtwist}, \eqref{ricctwist} and \eqref{scaltwist},
respectively, we easily obtain
\begin{align*}
	\dfrac{S}{6}+\dfrac{1}{3t^2}-\dfrac{t^2}{24}\pa{\abs{\qt{ab}}^2+
		\abs{\qq{ab}}^2}&=\dfrac{\ol{S}}{6}=\ol{R}_{55}=
	\dfrac{1}{t^2}+\dfrac{t^2}{4}\abs{\qt{ab}}^2;\\
	\dfrac{S}{6}+\dfrac{1}{3t^2}-\dfrac{t^2}{24}\pa{\abs{\qt{ab}}^2+
		\abs{\qq{ab}}^2}&=\dfrac{\ol{S}}{6}=\ol{R}_{66}=
	\dfrac{1}{t^2}+\dfrac{t^2}{4}\abs{\qq{ab}}^2.
\end{align*}
This implies immediately that
\begin{align*}
	\dfrac{7t^2}{24}\abs{\qt{ab}}^2&=\dfrac{S}{6}-\dfrac{2}{3t^2}-
	\dfrac{t^2}{24}\abs{\qq{ab}}^2;\\
	\dfrac{7t^2}{24}\abs{\qq{ab}}^2&=\dfrac{S}{6}-\dfrac{2}{3t^2}-
	\dfrac{t^2}{24}\abs{\qt{ab}}^2.
\end{align*}
Subtracting the two equations, it is easy to show that
\[
\abs{\qt{ab}}^2=\abs{\qq{ab}}^2=\dfrac{S}{2t^2}-\dfrac{2}{t^4}.
\]
Now, by the expression of the components in \eqref{ricctwist},
we have that
\[
R_{ab}-\dfrac{t^2}{2}\sum_{c=1}^4\pa{\qt{ac}\qt{bc}+\qq{ac}\qq{bc}}=
\dfrac{\ol{S}}{6}\delta_{ab}=\pa{\dfrac{S}{8}+\dfrac{1}{2t^2}}\delta_{ab};
\]
a straightforward computation shows that
\[
R_{11}+R_{22}-R_{33}-R_{44}=
t^2\sq{(\qt{12})^2+(\qq{12})^2-(\qt{34})^2-(\qq{34})^2},
\]
that is,
\[
B_{11}=t^2(A_{12}B_{12}+A_{13}B_{13}),
\]
for every local orthonormal frame $e\in O(M)_-$. Thus, we can choose
a frame $e$ such that the associated matrix $A$ is diagonal, in order
to obtain $B_{11}=0$ (note that this is not a global condition).
By \eqref{transfabc}, if we us choose
\[
a_+=I_3, \qquad
a_-=
\pmat{
	0 & 1 & 0\\
	-1 & 0 & 0\\
	0 & 0 & 1
},
\]
we easily obtain that
\[
\tilde{B}=
\pmat{
	B_{21} & B_{22} & B_{23} \\
	0 & -B_{12} & -B_{13}\\
	B_{31} & B_{32} & B_{33}
},
\]
which leads to
\[
B_{21}=\tilde{B}_{11}=t^2(\tilde{A}_{12}\tilde{B}_{12}+
\tilde{A}_{13}\tilde{B}_{13})=0
\]
(note that also $\tilde{A}$ is a diagonal matrix). By analogous
computations, we can conclude that $B=0$, i.e. $(M,g)$ is an
Einstein manifold.

Now, by hypothesis we have that
\[
0=\tilde{R}_{56}=\frac{t^2}{4}\sum_{a,c=1}^4 \qt{ac}\qq{ac};
\]
since $(M,g)$ is Einstein, this equation assumes the form
\[
A_{12}A_{13}+A_{23}(A_{22}+A_{33})=0.
\]
Let us again choose $e\in O(M)_-$ such that $A$ is diagonal. Choosing
suitable matrices $a_+\in SO(3)$, it is easy to obtain that
\[
A_{11}=A_{22}\vee A_{11}=-A_{22}.
\]
If we suppose that $A_{11}\neq A_{22}$, analogous computations
immediately lead to a contradiction. Thus,
$A_{11}=A_{22}$. Exchanging $A_{22}$ and $A_{33}$, we can repeat
the same argument to obtain $A_{11}=A_{33}$. Thus, $(M,g)$
is self-dual.

Finally, since $(M,g)$ is Einstein and self-dual, we compute
\[
\dfrac{S^2}{36}=\abs{\qt{ab}}^2=\dfrac{S}{2t^2}-\dfrac{2}{t^4},
\]
that is,
\[
t^4S^2-18t^2S+72=0.
\]
Again, this equation holds if and only if
\[
S\in\left\{\dfrac{6}{t^2},\dfrac{12}{t^2}\right\}
\]
and this ends the proof.
\end{proof}

As a consequence, we can state another well known result (see \cite{mus} and \cite{cdm}):
\begin{cor} \label{kahlertwist}
	The twistor space $(Z,g_t,J)$ associated to $(M,g)$ is a
	K\"{a}hler-Einstein
	manifold if and only if $(M,g)$ is an Einstein, self-dual manifold with
	scalar curvature equal to $12/t^2$.
\end{cor}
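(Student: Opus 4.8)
The plan is to read this off Theorem \ref{einstwist} together with the characterization of the Kähler condition as $\nabla J\equiv 0$. Since a Kähler-Einstein manifold is by definition both Kähler and Einstein, I would split the biconditional accordingly: the Einstein part is entirely handled by the Friedrich--Grunewald result, which already produces self-duality and restricts the scalar curvature to two values, so that the remaining work is to use the Kähler condition to single out the correct normalization $S=12/t^2$.

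For the forward implication, suppose $(Z,g_t,J)$ is Kähler-Einstein. Being Einstein, Theorem \ref{einstwist} forces $(M,g)$ to be Einstein and self-dual with $S\in\{6/t^2,12/t^2\}$, so only the exclusion of $S=6/t^2$ remains. Here I would invoke $\nabla J\equiv 0$ through a single well-chosen component. Using the local expression of the components of $\nabla J$ on a twistor space (see the appendices) together with the self-dual Einstein relations $\qq{14}=\qq{23}=S/12$, one has
\[
J_{3,6}^1=\tfrac{1}{2}t\Big[\tfrac{2}{t^2}-(\qq{14}+\qq{23})\Big]=\tfrac{1}{2}t\Big(\tfrac{2}{t^2}-\tfrac{S}{6}\Big),
\]
which vanishes precisely when $S=12/t^2$; the value $S=6/t^2$ would instead give $J_{3,6}^1=1/(2t)\neq 0$, contradicting $\nabla J\equiv 0$. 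Hence $S=12/t^2$.

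For the converse, assume $(M,g)$ is Einstein and self-dual with $S=12/t^2$. Theorem \ref{einstwist} immediately gives that $(Z,g_t)$ is Einstein, and it remains to verify that $(Z,g_t,J)$ is Kähler, i.e. that $\nabla J\equiv 0$. I would do this by substituting the self-dual Einstein curvature data with $S=12/t^2$ into the explicit expressions for all the components $J_{q,u}^p$, exactly as in the sample above, where each such component reduces to a multiple of $2/t^2-S/6=0$. This yields that $J$ is parallel (and in particular integrable, consistently with the Atiyah--Hitchin--Singer theorem), so $(Z,g_t,J)$ is Kähler and, combined with the Einstein property, Kähler-Einstein.

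The main obstacle is purely bookkeeping: confirming that the \emph{entire} set of components $J_{q,u}^p$ (not merely the representative $J_{3,6}^1$) vanishes under the self-dual Einstein hypothesis with $S=12/t^2$, for which I would rely on the tabulated local expressions of $\nabla J$. Once this single normalization is isolated, both directions follow at once from Theorem \ref{einstwist}.
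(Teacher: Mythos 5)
Your proposal is correct and follows essentially the same route as the paper: both directions rest on Theorem \ref{einstwist} to reduce to the Einstein, self-dual case with $S\in\{6/t^2,12/t^2\}$, and the exclusion of $S=6/t^2$ is done exactly as in the paper, via the component $J_{3,6}^1=\frac{1}{2}t\bigl[\frac{2}{t^2}-(\qq{14}+\qq{23})\bigr]=\frac{1}{2t}\neq 0$, while the converse is the same direct inspection of \eqref{nablajplus} (and the Einstein property of $g_t$) under the normalization $S=12/t^2$. The only cosmetic difference is that the paper checks the Einstein condition on $Z$ by inspecting \eqref{ricctwist} directly, whereas you route it through Theorem \ref{einstwist}; also note that most components of $\nabla J$ vanish from the Einstein plus self-dual relations alone, and only $J_{3,6}^1$ and its companions actually require $S=12/t^2$.
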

\begin{proof}
	If $(M,g)$ is an Einstein, self-dual manifold with $S=12/t^2$, a direct
	inspection of the components in \eqref{ricctwist} and \eqref{nablajplus}
	shows that $(Z,g_t)$ is K\"{a}hler-Einstein. Conversely, by Theorem
	\ref{einstwist} we know that $(M,g)$ is Einstein, self-dual with $S$
	equal to $6/t^2$ or $12/t^2$. If we suppose $S=6/t^2$, we immediately
	obtain that
	\[
	J_{3,6}^1=-J_{4,5}^1=\dfrac{1}{2t}\neq 0,
	\]
	which contradicts the hypothesis that $(Z,g_t,J)$ is a K\"{a}hler
	manifold and ends the proof.
\end{proof}

We recall that compact, Einstein, self-dual four-manifolds with
positive scalar curvature have been classified:
indeed, Hitchin showed that there are
just two possibilites, up to conformal equivalences, which are $\mathds{S}^4$ and
$\cp^2$ with their canonical metrics \cite{hitch}.

Now, we want to provide an analogue of Proposition \ref{bochflatkahl}, in
order to give another characterization of $\mathds{S}^4$.
Indeed, by direct inspection of the components listed in \eqref{covriemkahl},
one can immediately show the validity of
\begin{theorem} \label{locsymmtwist}
Let $(Z,g_t,J)$ be a K\"{a}hler-Einstein twistor space. Then, $(Z,g_t,J)$ is
locally symmetric
if and only if $(M,g)$ is homotetically isometric to $\mathds{S}^4$ with its
canonical metric.
\end{theorem}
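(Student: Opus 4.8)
The plan is to use Corollary \ref{kahlertwist} to reduce the statement to a condition on the anti-self-dual Weyl tensor of $(M,g)$, and then to read off the conclusion from the explicit list \eqref{covriemkahl}. Since $(Z,g_t,J)$ is K\"{a}hler-Einstein, Corollary \ref{kahlertwist} forces $(M,g)$ to be Einstein and self-dual with $S=12/t^2$. Hence $W^+=0$, and the entire curvature of $(M,g)$ is encoded by $S$ together with the anti-self-dual part $W^-$, i.e. by the matrix $C-\frac{S}{12}I_3$ in the block decomposition. In particular, $(M,g)$ is homothetically isometric to $\mathds{S}^4$ precisely when $W^-\equiv 0$: indeed, $W^+=W^-=0$ forces $W\equiv 0$, so $(M,g)$ is conformally flat and Einstein with positive scalar curvature, hence a space form of positive curvature, which is the canonical $\mathds{S}^4$ up to homothety and isometry.

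For the ``if'' direction I would observe that when $(M,g)$ is the round sphere one has $W^-=0$, the twistor space is $\cp^3$ with the Fubini-Study metric, a Hermitian symmetric space, and therefore locally symmetric; equivalently, substituting $W^-=0$ into \eqref{covriemkahl} makes every component of $\nabla\ol{\operatorname{Riem}}$ vanish. For the converse, assume $\nabla\ol{\operatorname{Riem}}\equiv 0$. The key observation, to be extracted from \eqref{covriemkahl}, is that among the components $\ol{R}_{pqrs,u}$ there are entries which are nonzero constant multiples (depending only on $t$ and $S$) of the entries of $C-\frac{S}{12}I_3$ \emph{themselves}, and not merely of their covariant derivatives on $M$. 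These arise from differentiating the horizontal curvature $\ol{R}_{abcd}$ along the fiber directions $u=5,6$, where the rotation of $J$ over the twistor $2$-sphere produces terms algebraic in $W^-$. Imposing the vanishing of these particular components forces $W^-\equiv 0$ pointwise on $M$; combined with $W^+=0$ this yields $W\equiv 0$, and hence, by the reduction above, $(M,g)$ is homothetically isometric to $\mathds{S}^4$.

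The main obstacle I anticipate is precisely this last step. It is \emph{not} enough to conclude that $\nabla W^-=0$ on $M$, since that condition already holds for the other Einstein, self-dual model $\cp^2$ (being itself a symmetric space), whose twistor space — the flag manifold — is nonetheless not locally symmetric. One must therefore isolate within \eqref{covriemkahl} the specific components of $\nabla\ol{\operatorname{Riem}}$ that depend \emph{algebraically} on the entries of $W^-$, arising from the fiber curvature of the twistor fibration, and verify that their simultaneous vanishing is equivalent to $W^-\equiv 0$. This is exactly what rules out the flag-manifold twistor space of $\cp^2$ and pins down $\mathds{S}^4$ as the unique possibility.
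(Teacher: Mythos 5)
Your proposal is correct and follows essentially the same route as the paper: the paper's own proof is exactly the reduction through Corollary \ref{kahlertwist} followed by direct inspection of the components listed in \eqref{covriemkahl}, and your $\cp^2$ sanity check correctly explains why conditions \emph{algebraic} in the Weyl curvature of $(M,g)$, rather than the differential condition $\nabla W=0$ on $M$, must do the work. Two details of your plan need adjusting when you actually carry out the inspection. First, the algebraic components are not the fibre derivatives $\ol{R}_{abcd,5}$, $\ol{R}_{abcd,6}$ of the horizontal curvature: by \eqref{covriemkahl} these vanish identically for every K\"{a}hler--Einstein twistor space, so imposing their vanishing gives nothing. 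The operative components are the horizontally differentiated mixed ones $\ol{R}_{abc5,e}$ and $\ol{R}_{abc6,e}$; for instance $2t\ol{R}_{12c5,3}=\frac{1}{t^2}\delta_{2c}-R_{121c}$. Since $(M,g)$ is Einstein and self-dual with $S=12/t^2$, one has $R_{abcd}=W_{abcd}+\frac{1}{t^2}\pa{\delta_{ac}\delta_{bd}-\delta_{ad}\delta_{bc}}$, so these components are precisely (up to the factor $-2t$) components of the surviving half of the Weyl tensor of $M$; their vanishing forces $R_{abcd}=\frac{1}{t^2}\pa{\delta_{ac}\delta_{bd}-\delta_{ad}\delta_{bc}}$, i.e. constant sectional curvature $1/t^2$, which is the conclusion you want (the conditions $R_{abcd,e}=0$ are then automatic). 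Second, a labelling slip: with this paper's convention, self-dual means $W^-=0$ (equivalently $C=\frac{S}{12}I_3$ in a positively oriented frame), so the half that survives and must be shown to vanish is $W^+$, not $W^-$; since only the split ``one half already zero, kill the other half'' matters, this does not affect the argument.
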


Note that, combining Theorem \ref{locsymmtwist} with Theorem
\ref{nablabochflat} and Corollary \ref{weylpartwist},
we can state the following
\begin{theorem} \label{spherecharact}
Let $(Z,g_t,J)$ be a K\"{a}hler-Einstein twistor space.
Then the following conditions are equivalent:
\begin{enumerate}
	\item $(Z,g_t,J)$ is a conformally symmetric manifold;
	\item $(Z,g_t,J)$ is a Bochner-parallel manifold;
	\item $(M,g)$ is homotetically isometric to $\mathds{S}^4$, with its
	canonical metric.
\end{enumerate}
\end{theorem}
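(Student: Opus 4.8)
The plan is to deduce Theorem~\ref{spherecharact} directly by chaining together Corollary~\ref{weylpartwist}, Theorem~\ref{locsymmtwist} and Theorem~\ref{nablabochflat}; no further curvature computation is needed, and the entire content is the bookkeeping of hypotheses. Concretely, I would establish the two equivalences $(1)\Leftrightarrow(3)$ and $(2)\Leftrightarrow(3)$ independently, from which $(1)\Leftrightarrow(2)\Leftrightarrow(3)$ follows at once.

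To prove $(1)\Leftrightarrow(3)$, I would first apply Corollary~\ref{weylpartwist}: since it asserts, for an arbitrary twistor space $(Z,g_t)$, that conformal symmetry $\nabla\ol{\operatorname{W}}\equiv0$ is equivalent to local symmetry $\nabla\ol{\operatorname{Riem}}\equiv0$, it applies verbatim to our K\"{a}hler-Einstein $(Z,g_t,J)$. Hence condition~(1) is equivalent to $(Z,g_t,J)$ being locally symmetric. I would then invoke Theorem~\ref{locsymmtwist}, whose hypothesis is precisely that $(Z,g_t,J)$ is a K\"{a}hler-Einstein twistor space, to identify local symmetry with $(M,g)$ being homothetically isometric to $\mathds{S}^4$ with its canonical metric, that is, with condition~(3).

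The equivalence $(2)\Leftrightarrow(3)$ is then immediate from Theorem~\ref{nablabochflat}, which characterizes Bochner-parallel twistor spaces---with no a priori restriction on $J$---as exactly those arising from an $(M,g)$ homothetically isometric to $\mathds{S}^4$; specializing to the K\"{a}hler-Einstein case yields $(2)\Leftrightarrow(3)$ at once, and combining the two equivalences closes the argument. The only subtle point, and in this sense the sole ``obstacle'', is to verify that the generality of the cited results is compatible with the standing assumption: Corollary~\ref{weylpartwist} and Theorem~\ref{nablabochflat} are stated for general twistor spaces, so one merely observes that restricting to the K\"{a}hler-Einstein subclass neither strengthens nor weakens their conclusions, while Theorem~\ref{locsymmtwist} already presupposes the K\"{a}hler-Einstein condition and therefore applies directly.
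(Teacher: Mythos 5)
Your proposal is correct and is essentially identical to the paper's own argument: the paper obtains Theorem \ref{spherecharact} precisely by combining Corollary \ref{weylpartwist} (conformal symmetry $\Leftrightarrow$ local symmetry for any twistor space), Theorem \ref{locsymmtwist} (local symmetry $\Leftrightarrow$ $\mathds{S}^4$ in the K\"ahler-Einstein case), and Theorem \ref{nablabochflat} (Bochner-parallel $\Leftrightarrow$ $\mathds{S}^4$, with no a priori hypothesis on $J$). Your bookkeeping of which cited result requires the K\"ahler-Einstein hypothesis and which does not matches the paper's own remark following the theorem.
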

\noindent
Recall that the equivalence $(2)\Leftrightarrow(3)$ holds without
any \emph{a priori} hypothesis on $(Z,g_t,J)$,
by Theorem \ref{nablabochflat}.

Moreover, the Einstein condition on $g_t$ implies that
$(Z,g_t,J)$ is a harmonic curvature manifold, i.e.
$\diver\ol{\operatorname{Riem}}\equiv 0$. Then, by this consideration,
equation \eqref{nablaboch} and Theorem \ref{spherecharact} we
can state the following
\begin{theorem}
Let $(Z,g_t,J)$ be a K\"{a}hler-Einstein twistor space. Then, either
$(Z,g_t,J)$ is Bochner-parallel (and, then,
$(M,g)$ is homotetically isometric to $\mathds{S}^4$)
or $\nabla{\ol{\operatorname{B}}}\neq 0$ and
$\diver\ol{\operatorname{B}}\equiv 0$.
\end{theorem}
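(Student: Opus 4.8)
The plan is to leverage the structural facts already established about K\"{a}hler-Einstein twistor spaces rather than to recompute anything from scratch. First I would recall that a K\"{a}hler-Einstein manifold has parallel Ricci tensor, so in particular $\nabla\ol{\ricc}\equiv 0$, and its scalar curvature $\ol{S}$ is constant, whence $dS=0$ in the notation of \eqref{nablaboch}; moreover $\nabla J\equiv 0$ since the almost complex structure of a K\"{a}hler manifold is parallel, so every term $J_{q,u}^p$ appearing in \eqref{nablaboch} vanishes identically. Feeding these vanishings into the defining formula \eqref{nablaboch} collapses the enormous right-hand side: every block that contains a factor $J^{\bullet}_{\bullet,u}$, $R_{\bullet\bullet,u}$ multiplied against the Ricci-type terms, or $S_u$ drops out, and one is left with the clean identity $\ol{B}_{pqrs,u}=\ol{R}_{pqrs,u}$ on $Z$. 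This is the single computation that makes the whole statement transparent, and I would present it as the key reduction.

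With $\nabla\ol{\operatorname{B}}=\nabla\ol{\operatorname{Riem}}$ in hand, the dichotomy is immediate. Either $\nabla\ol{\operatorname{Riem}}\equiv 0$, i.e.\ $(Z,g_t,J)$ is locally symmetric, in which case Theorem \ref{locsymmtwist} forces $(M,g)$ to be homothetically isometric to $\mathds{S}^4$ and then, checking the components $\ol{B}_{pqr5,t}$ exactly as in the proof of Theorem \ref{nablabochflat}, one sees that $(Z,g_t,J)$ is in fact Bochner-parallel; or $\nabla\ol{\operatorname{Riem}}\not\equiv 0$, in which case $\nabla\ol{\operatorname{B}}\not\equiv 0$ as well by the identity just derived. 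In the second case it remains only to verify that the divergence of $\ol{\operatorname{B}}$ still vanishes. Here I would invoke the already-stated fact that the Einstein condition on $g_t$ makes $(Z,g_t,J)$ a harmonic curvature manifold, so $\diver\ol{\operatorname{Riem}}\equiv 0$, together with $\nabla\ol{\ricc}\equiv 0$, $\nabla J\equiv 0$ and $dS=0$, which kill the remaining contributions when one traces \eqref{nablaboch} over the pair of indices $(u,s)$ defining the divergence; the upshot is $\diver\ol{\operatorname{B}}\equiv 0$.

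I expect the main obstacle to be purely bookkeeping rather than conceptual: one must be careful in the contraction that produces the divergence, because even though $\nabla J=0$ kills the derivative-of-$J$ terms, the Bochner tensor itself is built from $J$ (not only its derivative), so the traced identity inherits the $J$-dependent algebraic blocks of \eqref{bochneralmherm}. The point to check is that $\diver\ol{\operatorname{Riem}}\equiv 0$ combined with $\nabla\ol{\ricc}\equiv 0$ and constancy of $\ol{S}$ annihilates all of these contracted terms; since each surviving block of $\nabla\ol{\operatorname{B}}$ after setting $\nabla J=0$ is a constant-coefficient combination of $\nabla\ol{\operatorname{Riem}}$ and $\nabla\ol{\ricc}$ components (possibly contracted against the parallel tensors $\delta$ and $J$), its trace is a combination of $\diver\ol{\operatorname{Riem}}$ and traces of $\nabla\ol{\ricc}$, all of which vanish. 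Once this traced identity is verified, the theorem follows by simply pairing the two alternatives above with Theorem \ref{spherecharact}.
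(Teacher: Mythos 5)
Your proposal is correct and follows essentially the same route as the paper: the paper's (very terse) justification is precisely the combination of the harmonic-curvature property of the Einstein metric $g_t$, the collapse of equation \eqref{nablaboch} under $\nabla\ol{\ricc}\equiv 0$, $\ol{S}_u=0$ and $\nabla J\equiv 0$ (giving $\nabla\ol{\operatorname{B}}=\nabla\ol{\operatorname{Riem}}$, hence $\diver\ol{\operatorname{B}}=\diver\ol{\operatorname{Riem}}\equiv 0$), and Theorem \ref{spherecharact} for the $\mathds{S}^4$ identification in the Bochner-parallel case. Your third-paragraph worry about $J$-dependent blocks surviving the trace is moot, since every such block in \eqref{nablaboch} is multiplied by a factor that vanishes identically, so the tensorial identity $\nabla\ol{\operatorname{B}}=\nabla\ol{\operatorname{Riem}}$ holds before any contraction is taken.
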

Though in Theorem \ref{nablabochflat}
we characterized $\mathds{S}^4$ as the only four-dimensional
manifold whose twistor space is Bochner-parallel, we cannot obtain
an analogous characterization if we drop the hypothesis
of $g_t$ being K\"{a}hler-Einstein
in Theorem \ref{locsymmtwist}. For instance, by \eqref{riemtwist},
an easy computation of the
local expression of $\nabla{\operatorname{Riem}}$ proves the following
\begin{proposition} \label{ricflatselfdual}
Let $(M,g)$ be a Ricci-flat, self-dual, locally symmetric
four-manifold. Then, its twistor space $(Z,g_t)$ is locally symmetric.
\end{proposition}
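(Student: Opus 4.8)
The plan is to start from the explicit expression \eqref{riemtwist} of the curvature $\ol{\operatorname{Riem}}$ of $(Z,g_t)$ and differentiate it, using that the hypotheses collapse almost all of the base curvature to zero. First I would record the consequences of the assumptions on $(M,g)$: being Ricci-flat means $\ricc\equiv 0$, so in particular $S\equiv 0$ and the off-diagonal block $B$ (the trace-free Ricci part) vanishes identically; being self-dual then gives $A=\frac{S}{12}I_3=0$. Hence, by the characterisation of Einstein, self-dual metrics recalled in the proof of Theorem \ref{einstwist}, all the quantities $\qd{ab}$, $\qt{ab}$ and $\qq{ab}$ vanish identically on $M$ (they equal $S/12=0$ on the six distinguished pairs and are zero otherwise). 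The only surviving piece of $\operatorname{Riem}$ is therefore the anti-self-dual Weyl block $C=W^-$, which is trace-free since $S=0$; moreover, local symmetry gives $\nabla\operatorname{Riem}\equiv 0$ on $M$, so in particular $\nabla W^-\equiv 0$.

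Next I would substitute these vanishings into \eqref{riemtwist}. Since every mixed and self-dual contribution there is built out of the $Q$-quantities and of $S$, the components $\ol R_{pqrs}$ simplify drastically: the purely horizontal $\ol R_{abcd}$ reduce to the components of the base anti-self-dual Weyl tensor $W^-$, the components carrying the fibre indices $5,6$ reduce to the constant ($1/t^2$-type) contribution of the round two-sphere fibre, and all remaining mixed components vanish together with the $Q$'s. Thus $\ol{\operatorname{Riem}}$ splits into a parallel fibre part plus the horizontally lifted tensor $W^-$.

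Then I would compute $\nabla\ol{\operatorname{Riem}}$ from the local expression of $\nabla\ol{\operatorname{Riem}}$, separating horizontal derivatives (frame indices $1,\dots,4$) from the vertical ones ($5,6$). In the horizontal directions the derivative of $\ol R_{abcd}$ reproduces $\nabla^M W^-$, which is zero by local symmetry, while the derivatives of the vanishing $Q$-terms and of the constant fibre terms are manifestly zero. In the vertical directions the fibre, being a round sphere, contributes only parallel curvature, and every O'Neill-type mixed term is governed by the $Q$'s, hence vanishes. The only point that genuinely uses the fibration is the vertical derivative of the surviving horizontal block: moving along the fibre rotates the compatible complex structure and hence acts on $M$ through the $SO(3)$ factor that rotates the self-dual forms $\Lambda_+$ while fixing the anti-self-dual ones $\Lambda_-$; as $W^-=C$ is a symmetric endomorphism of $\Lambda_-$, it is invariant along the fibre, so its covariant derivative in the directions $5,6$ vanishes and the accompanying connection terms cancel. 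Combining, every component $\ol R_{pqrs,u}$ is zero, i.e. $(Z,g_t)$ is locally symmetric.

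The main obstacle is exactly this last bookkeeping in the vertical directions: a priori the horizontal frame rotates along the fibre, so one must check that the connection contributions of the form $\ol R_{pqrs}\,\omega_{\cdot\,5}$ and $\ol R_{pqrs}\,\omega_{\cdot\,6}$ do not produce a spurious nonzero derivative. The clean way to see the cancellation is the invariance of $W^-$ under the fibre $SO(3)$-action noted above; in the explicit moving-frames computation this is reflected by the fact that, once $\qd{ab}=\qt{ab}=\qq{ab}=0$, $S=0$ and $\nabla W^-=0$ are imposed, every single term in the expression of $\ol R_{pqrs,u}$ carries a factor that is one of these vanishing quantities, which is precisely what makes the computation ``easy''.
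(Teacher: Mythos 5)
Your proposal is correct and takes essentially the same route as the paper, whose proof is exactly the direct computation of $\nabla\ol{\operatorname{Riem}}$ from \eqref{riemtwist} once one observes that Ricci-flatness and self-duality force $S=0$, $B=0$, $A=0$, hence $\qd{ab}=\qt{ab}=\qq{ab}\equiv 0$, leaving only the parallel fibre term and the horizontal block $R_{abcd}=W^-_{abcd}$. Your extra care with the vertical derivatives (the cancellation of the connection terms, equivalently the invariance of $W^-$ under the fibre rotation) simply fills in the details behind what the paper calls ``an easy computation''.
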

%
Even though $\mathds{S}^4$ is not the only four-manifold whose twistor space
satisfies $\nabla\ol{\operatorname{Riem}}\equiv 0$, if we consider
Hermitian twistor spaces, i.e. the ones associated to self-dual
manifolds (see \cite{athisin} and \cite{cdm}), we can state the following
\begin{theorem} \label{locsymmricpar}
Let $(M,g)$ be a self-dual Riemannian manifold and let $(Z,g_t)$ be
its twistor space. Then,
\begin{enumerate}
	\item $(Z,g_t)$ is Ricci parallel
	(i.e., $\nabla\ol{\ricc}\equiv 0$) if and only if
	$(Z,g_t)$ is an Einstein manifold or $(M,g)$ is Ricci-flat;
	\item $(Z,g_t)$ is locally symmetric if and only if
	$(M,g)$ is homotetically isometric to $\mathds{S}^4$
	with its canonical metric
	or $(M,g)$ is Ricci-flat and locally symmetric.
\end{enumerate}
\end{theorem}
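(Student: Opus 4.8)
The plan is to prove the two equivalences separately, reducing every direction except two to results already established: Theorem~\ref{einstwist}, Corollary~\ref{kahlertwist}, Theorem~\ref{locsymmtwist} and Proposition~\ref{ricflatselfdual}. Throughout I use that, since $(M,g)$ is self-dual, in every frame $e\in O(M)_-$ the block $A$ is the scalar matrix $\frac{S}{12}I_3$, so that the components $\qd{ab},\qt{ab},\qq{ab}$ are governed entirely by $S$ and by the off-diagonal block $B$, equivalently by the traceless Ricci tensor of $M$. For part (1) the backward implication is immediate: if $(Z,g_t)$ is Einstein then $\ol{S}$ is constant (as $\dim Z=6\ge 3$) and $\nabla\ol{\ricc}=\frac{\ol{S}}{6}\nabla g_t\equiv 0$; if instead $(M,g)$ is Ricci-flat, then $S\equiv 0$ forces $A\equiv 0$ while the Einstein condition gives $B\equiv 0$, so $\qd{ab}=\qt{ab}=\qq{ab}=0$ identically, and feeding these vanishings into \eqref{ricctwist} collapses $\ol{\ricc}$ to $\frac{1}{t^2}(\theta^5\otimes\theta^5+\theta^6\otimes\theta^6)$. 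Since in the moving-frame computation the connection forms $\omega^5_a,\omega^6_a$ mixing fibre and base are proportional to $\qt{ab},\qq{ab}$, they too vanish, the horizontal distribution becomes integrable, and $\nabla\ol{\ricc}\equiv 0$ follows.

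The forward implication of part (1) is the heart of the matter. Assuming $\nabla\ol{\ricc}\equiv 0$, I would first force $(M,g)$ to be Einstein by imposing the vanishing of suitable components of $\nabla\ol{\ricc}$ and running the $SO(3)\times SO(3)$ frame-change argument of \eqref{transfabc}, exactly as in the proof of Theorem~\ref{complexlinricci}, to upgrade the resulting frame-dependent relations to the global identity $B\equiv 0$. Once $(M,g)$ is Einstein and self-dual, the divergence-free condition $\qt{ac}_{,c}=\qq{ac}_{,c}=0$ gives $\ol{R}_{a5}=\ol{R}_{a6}=0$, so $\ol{\ricc}$ has the two constant eigenvalues $\lambda_h$ on the horizontal block and $\lambda_f=\ol{R}_{55}=\ol{R}_{66}$ on the fibre, and a short computation yields $(\nabla_u\ol{\ricc})_{a5}=(\lambda_h-\lambda_f)\,\omega^5_a(u)$. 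Hence either $\omega^5_a\equiv 0$, which means $\qt{ab}=\qq{ab}=0$, i.e. $S\equiv 0$ and $(M,g)$ is Ricci-flat, or $\lambda_h=\lambda_f$, i.e. $(Z,g_t)$ is Einstein. This is precisely the stated dichotomy.

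For part (2) the backward implication again reduces to cited results: a base homothetic to $\SS^4$ has, by Corollary~\ref{kahlertwist}, a K\"ahler--Einstein twistor space with $S=12/t^2$, locally symmetric by Theorem~\ref{locsymmtwist}; and a Ricci-flat, locally symmetric, self-dual base falls under Proposition~\ref{ricflatselfdual}. For the forward implication, local symmetry implies $\nabla\ol{\ricc}\equiv 0$, so part (1) leaves two cases. If $(Z,g_t)$ is Einstein, Theorem~\ref{einstwist} gives $(M,g)$ Einstein self-dual with $S\in\{6/t^2,12/t^2\}$; when $S=12/t^2$ the metric is K\"ahler--Einstein and Theorem~\ref{locsymmtwist} yields the homothetic $\SS^4$, while the case $S=6/t^2$ I would discard by exhibiting a single nonzero component of $\nabla\ol{\operatorname{Riem}}$, computed from \eqref{riemtwist} and the value $J_{3,6}^1=\frac{1}{2t}\neq 0$ of Corollary~\ref{kahlertwist}, contradicting local symmetry. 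If instead $(M,g)$ is Ricci-flat, then $\qt{ab}=\qq{ab}=0$ makes the horizontal distribution integrable and $(Z,g_t)$ locally the Riemannian product of $M$ with a round fibre, so $\nabla\ol{\operatorname{Riem}}\equiv 0$ is equivalent to $\nabla^M\operatorname{Riem}\equiv 0$, i.e. $(M,g)$ locally symmetric; this is the converse of Proposition~\ref{ricflatselfdual}.

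I expect the main obstacle to be the first step of the forward implication in part (1): extracting from the long moving-frame expression of $\nabla\ol{\ricc}$ the precise linear combinations whose vanishing, after the frame changes \eqref{transfabc}, forces $B\equiv 0$ while cleanly separating the self-dual non-Ricci-flat stratum from the Ricci-flat one. The eigenvalue identity $(\nabla_u\ol{\ricc})_{a5}=(\lambda_h-\lambda_f)\,\omega^5_a(u)$ then finishes the argument almost for free. The only other delicate point is choosing, in the $S=6/t^2$ case of part (2), a component of $\nabla\ol{\operatorname{Riem}}$ genuinely sensitive to the non-K\"ahler nature of $g_t$.
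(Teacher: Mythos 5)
Your overall architecture matches the paper's (reduce part (2) to part (1) plus the cited theorems; in part (1), first force $(M,g)$ to be Einstein, then settle the Einstein self-dual case), and your finishing move for the Einstein case --- the identity $(\nabla_u\ol{\ricc})_{a5}=(\lambda_h-\lambda_f)\,\ol{\omega}^5_a(u)$ giving the dichotomy ``fibre and horizontal eigenvalues agree, or the mixed connection forms vanish'' --- is correct and in fact cleaner than the paper's component-by-component check. But the proposal has a genuine gap exactly where you flag it, and it is not a gap that the argument of Theorem \ref{complexlinricci} can fill ``exactly as'' you claim. In that theorem the hypothesis hands you the \emph{algebraic} relations $\ol{R}_{12}=\ol{R}_{34}=0$, i.e. affine--quadratic identities such as $B_{32}=t^2(B_{22}B_{32}+B_{23}B_{33})$, which the frame changes \eqref{transfabc} kill entry by entry. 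Under the hypothesis $\nabla\ol{\ricc}\equiv 0$, the components \eqref{nablaricc} are riddled with derivative terms $\qt{ad}_{,c}$, $\qq{ad}_{,c}$; the only derivative-free ones are $\ol{R}_{55,6}$, $\ol{R}_{66,5}$, $\ol{R}_{56,5}$, $\ol{R}_{56,6}$, and their vanishing yields only the \emph{homogeneous} quadratic relations $\qd{ab}\qt{ab}=\qd{ab}\qq{ab}=0$, i.e. (after self-duality) mutual orthogonality of the columns of $B$ in every frame. Orthogonality of columns is very far from $B\equiv 0$, and no entry-by-entry frame chase resolves it: the paper needs a different mechanism here, namely normalizing a hypothetical $B$ with no zero column to $B=I_3$ via $a_-^{-1}=B_{or}$ in \eqref{transfabc}, then computing $\ol{R}_{13,5}$, $\ol{R}_{24,5}$, $\ol{R}_{12,5}$ in two frames to extract incompatible values of $S$ (forcing $t^2=3+\sqrt{17}$ versus $S=12/t^2$), and concluding that a column of $B$ vanishes, hence $B\equiv 0$. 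This normalization-and-contradiction argument is the core of the theorem and is absent from your proposal; without it, part (1) and therefore part (2) remain unproven.

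A secondary problem sits in your treatment of the case $S=6/t^2$ in part (2). You propose to discard it by ``exhibiting a nonzero component of $\nabla\ol{\operatorname{Riem}}$'' justified by $J^1_{3,6}=\tfrac{1}{2t}\neq 0$; but non-parallelism of $J$ does not imply non-parallelism of $\ol{\operatorname{Riem}}$ (and the component list \eqref{covriemkahl} you would want to invoke is derived only under the K\"ahler--Einstein assumption, which fails precisely when $S=6/t^2$). The paper closes this case with an honest computation from \eqref{riemtwist}: for an Einstein self-dual base,
\begin{equation*}
\ol{R}_{5653,u}\theta^u=\dfrac{t^3S}{6912}\pa{S-\dfrac{12}{t^2}}\pa{S-\dfrac{24}{t^2}}\theta^2 ,
\end{equation*}
whose vanishing forces $S\in\{12/t^2,24/t^2\}$; intersected with the Einstein values $\{6/t^2,12/t^2\}$ of Theorem \ref{einstwist} this leaves $S=12/t^2$, and only then does Theorem \ref{locsymmtwist} give $\SS^4$. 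You need this computation (or an equivalent explicit one); the appeal to $\nabla J\neq 0$ does not substitute for it. The remaining pieces of your proposal --- both backward implications and the Ricci-flat branch of part (2) --- are correct and agree with the paper.
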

\begin{proof}
\begin{enumerate}
	\item
	First, let us suppose that $(M,g)$ is an Einstein,
	self-dual manifold with scalar curvature $S$. In particular,
	$(M,g)$ is Ricci parallel.
	Moreover, since under these hypotheses
	\[
	\qd{de}\qt{de}=\qd{de}\qq{de}=\qt{de}\qq{de}
	=\qd{ab}_{,c}=\qt{ab}_{,c}=\qq{ab}_{,c}=0
	\]
	for
	every $a,b,c$, we immediately obtain
	that the only components listed in \eqref{nablaricc} that may not
	vanish are $\ol{R}_{ab,5}, \ol{R}_{ab,6}, \ol{R}_{a5,b},
	\ol{R}_{a6,b}$ for some $a,b$. A straightforward computation
	shows that $\ol{R}_{ab,5}=\ol{R}_{ab,6}=0$, regardless of the
	value of $S$.
	
	Let us consider, for instance,
	\[
	\ol{R}_{15,b}=-\dfrac{t^3 S}{1728}\pa{S^2-\dfrac{18S}{t^2}
		+\dfrac{72}{t^4}}\delta_{3b}=
	-\dfrac{t^3 S}{1728}\pa{S-\dfrac{12}{t^2}}\pa{S-\dfrac{6}{t^2}}
	\delta_{3b}.
	\]
	Similar
	computations show that the other components of
	$\nabla{\ol{\ricc}}$ vanish for the same values of $S$.
	Thus, by Theorem
	\ref{einstwist}, we conclude that, if $(M,g)$ is an Einstein,
	self-dual manifold, $(Z,g_t)$ is Ricci parallel if and only if
	it is an Einstein manifold or the scalar curvature of $(M,g)$
	vanishes.
	
	Thus, in order to prove the statement, it is sufficient to show
	that any self-dual manifold whose twistor space is Ricci parallel
	must also be an Einstein manifold. Under these hypotheses, we can
	observe that
	\[
	0=\ol{R}_{55,6}=\ol{R}_{66,5}\Longra
	0=\qd{ab}\qt{ab}=\qd{ab}\qq{ab};
	\]
	by the self-duality condition, these equations become
	\[
	\begin{cases}
		B_{13}B_{11}+B_{31}B_{33}+B_{21}B_{23}=0\\
		B_{12}B_{11}+B_{21}B_{22}+B_{31}B_{32}=0
	\end{cases} .
	\]
	Since the system holds for every $e\in O(M)_-$, by a suitable change
	of frames we obtain that
	\[
	B_{12}B_{13}+B_{22}B_{23}+B_{32}B_{33}=0.
	\]
	The validity of these three equations is equivalent to
	the orthogonality of the columns of the matrix $B$; this means
	that $B^T B=D$, where $D$ is a diagonal matrix. Moreover, let us		denote the i-th column of $B$ as $v_i$ and let us define
	\[
	||v_i||^2:=\sum_{j=1}^3 B_{ji}B_{ji};
	\]
	furthermore, let us suppose that $v_i\neq 0$ for every $i$.
	Since the columns of $B$ are the rows of $B^T$, if we replace
	the rows of $B^T$ with $v_i^T/||v_i||^2$, we obtain that
	the new matrix, which we denote as $B_{or}$, is orthogonal.
	In particular, we may assume that $B_{or}\in SO(3)$ (otherwise,
	we could replace it with $-B_{or}$); thus, putting
	$a_-^{-1}=B_{or}$ and $a_+=I_3$ in \eqref{transfabc},
	we have that
	\[
	a_-^{-1}Ba_+=B_{or}B=I_3.
	\]
	By the expressions of the entries of the matrices
	$A$ and $B$, it is easy to obtain
	\begin{align*}
		\ol{R}_{13,5}&=2t\pa{\dfrac{S}{3}+2}-\dfrac{t^3 S}{12}=0;\\
		\ol{R}_{24,5}&=\dfrac{tS}{6}-\dfrac{t^3 S}{12}
		\pa{\dfrac{S}{6}-1}=0.
	\end{align*}
	The two equations can hold simultaneously if and only if
	$t=\sqrt{3+\sqrt{17}}$, which implies that
	\[
	S=\dfrac{3(1+\sqrt{17})}{2}.
	\]
	However, since $S$ is invariant under
	change of frames, we can choose
	\[
	a_-=\pmat{
		0 & 1 & 0\\
		-1 & 0 & 0\\
		0 & 0 & 1
	}, \qquad a_+=I_3
	\]
	in \eqref{transfabc} in order to obtain
	\[
	B=\pmat{
		0 & -1 & 0\\
		1 & 0 & 0\\
		0 & 0 & 1
	};
	\]
	now, it is easy to compute
	\[
	\ol{R}_{12,5}=-t+\dfrac{t^3 S}{12}=0,
	\]
	which holds if and only if
	\[
	S=\dfrac{12}{t^2}=\dfrac{3(\sqrt{17}-3)}{2}\neq
	\dfrac{3(1+\sqrt{17})}{2}.
	\]
	Therefore, we get a contradiction and we conclude that at least
	one of the columns of $B$ must be made of zeros for every
	$e\in O(M)_-$, which obviously implies that $(M,g)$ is an Einstein
	manifold.
	\item We know that $\mathds{S}^4$ and every locally symmetric,
	self-dual, Ricci-flat
	manifold have locally symmetric twistor spaces, by Proposition
	\ref{ricflatselfdual}. Conversely, let us suppose that
	$(Z,g_t)$ is locally symmetric: in particular, it is Ricci parallel,
	therefore $(Z,g_t)$ is an Einstein manifold or $(M,g)$ is Ricci-flat.
	
	If $S\neq 0$, by \eqref{riemtwist}, it is easy to compute
	\begin{align*}
		\ol{R}_{5653,u}\theta^u&=
		\dfrac{t}{2}\left\{\sq{-\dfrac{3}{2}\qd{34}+\dfrac{t^2}{2}
			\qq{23}\qt{42}+\dfrac{t^2}{4}\qt{13}\qq{14}}\qt{42}-
		\dfrac{t^3}{8}(\qt{13})^2\qq{23}+\dfrac{1}{2t}\qq{23}\right\}
		\theta^2=\\
		&=\dfrac{t^3S}{6912}\pa{S^2-\dfrac{36S}{t^2}+\dfrac{288}{t^4}}=
		\dfrac{t^3S}{6912}\pa{S-\dfrac{12}{t^2}}\pa{S-\dfrac{24}{t^2}}=0,
	\end{align*}
	which holds if and only if $S\in\{12/t^2,24/t^2\}$.
	Since $(Z,g_t)$ must be an Einstein manifold, this implies that
	$S\in\{6/t^2,12/t^2\}$: therefore, $S=12/t^2$ and, by Theorem
	\ref{locsymmtwist}, we conclude that $(M,g)$ is a spherical space form.
	
	Finally, let us suppose that $S=0$, i.e. $(M,g)$ is Ricci-flat.
	Since
	\[
	\ol{R}_{abcd,e}=R_{abcd,e}, \mbox { for every } a,b,c,d,e=1,...,4,
	\]
	it is apparent that $(M,g)$ is locally symmetric by hypothesis and this
	ends the proof.
\end{enumerate}
\end{proof}



\section{A quadratic formula for $\nabla J$ and higher order conditions}
%
%

\subsection{General quadratic formula for the square norm of $\nabla J$}

We begin stating a general result:

\begin{theorem} \label{squarejtwist}
Let $(M,g)$ be a Riemannian four-manifold and $(Z,g_t,J)$ be its twistor
space. Then, the equality
\begin{equation} \label{squarenablaj}
	\abs{\nabla J}^2=\dfrac{1}{3}\abs{d\omega}^2+\dfrac{1}{8}\abs{N_J}^2
\end{equation}
holds, where $\abs{d\omega}^2=\sum_{p,q,t=1}^6 d\omega(e_p,e_q,e_t)d\omega(e_p,e_q,e_t)$ and
$\abs{N_J}^2=\sum_{p,q,t=1}^6 N_{pq}^tN_{pq}^t$.
\end{theorem}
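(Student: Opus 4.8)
The plan is to prove \eqref{squarenablaj} pointwise and tensorially, reducing it to a single scalar relation and then closing that relation with the explicit twistor data. Fix a point and a local orthonormal frame, let $\omega(X,Y)=g(JX,Y)$ be the fundamental two-form, and set $T_{pqu}=(\nabla_{e_u}\omega)(e_p,e_q)$, with $\omega_{pa}=g(Je_p,e_a)=J^a_p$. Since $\nabla$ is metric one has $\abs{\nabla J}^2=\sum_{p,q,u}T_{pqu}^2$; moreover $T$ is skew in $(p,q)$ and, because $\nabla_uJ$ anticommutes with $J$, it satisfies the anti-$J$-invariance $\sum_{a,b}\omega_{pa}\omega_{qb}T_{abu}=-T_{pqu}$. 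These two algebraic properties of $T$ are the only structural facts I would record at the outset.

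First I would rewrite the two right-hand quantities through $T$. Using the torsion-free formula $d\omega(e_p,e_q,e_u)=T_{pqu}+T_{qup}+T_{upq}$ and expanding the square, the skew-symmetry alone gives
\[
\abs{d\omega}^2=3\abs{\nabla J}^2+6C,\qquad C:=\sum_{p,q,u}T_{pqu}T_{qup},
\]
so that $\tfrac13\abs{d\omega}^2=\abs{\nabla J}^2+2C$. Hence the whole theorem is \emph{equivalent} to the scalar relation $\abs{N_J}^2=-16\,C$. To bring $N_J$ into the same language I would express the Nijenhuis tensor through the Levi-Civita connection, $N(X,Y)=(\nabla_{JX}J)Y-(\nabla_{JY}J)X+(\nabla_XJ)JY-(\nabla_YJ)JX$, which in components reads $N^t_{pq}=\sum_a\pa{\omega_{pa}T_{qta}-\omega_{qa}T_{pta}-\omega_{pa}T_{atq}+\omega_{qa}T_{atp}}$.

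The decisive observation is that $\abs{N_J}^2=-16C$ is \emph{not} a general almost Hermitian identity: on a strict nearly K\"ahler manifold $d\omega=3\nabla\omega$ forces $\tfrac13\abs{d\omega}^2=3\abs{\nabla J}^2$, already exceeding $\abs{\nabla J}^2$, so the relation can hold only by invoking the special structure of $\nabla J=\nabla J^+$ on a twistor space. Accordingly, the concluding step is to substitute the explicit moving-frame components of $\nabla J$ for $(Z,g_t,J)$ (the list \eqref{nablajplus} and the corresponding appendix) into both $\abs{N_J}^2$ and $C$, obtaining each as an explicit polynomial in the curvature data of $(M,g)$ (the quantities $\qd{ab},\qt{ab},\qq{ab}$, equivalently the entries of $A$, $B$, $C$) and in $t$, and checking the equality termwise.

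The main obstacle is exactly this last evaluation of $\abs{N_J}^2$: it is a quartic contraction in $T$ weighted by four factors of $\omega$, producing several a priori independent invariants. I expect to tame it in two ways before inserting numbers. First, contracting the anti-$J$-invariance to the transvection identity $\sum_a\omega_{pa}T_{acu}=\sum_a\omega_{ca}T_{pau}$ collapses the $\omega$-pairings inside $N^t_{pq}$ and removes the $\omega$'s from the final norm. Second, the large number of vanishing components of $\nabla J^+$ on a twistor space kills most of the remaining cross-terms. Once $\abs{N_J}^2$ and $C$ are reduced to the same polynomial, the identity $\abs{N_J}^2=-16C$, and with it \eqref{squarenablaj}, follows.
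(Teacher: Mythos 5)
Your proposal is correct, and it takes a genuinely different route from the paper's. The paper proceeds by pure computation in the moving frame: it evaluates $\abs{d\omega}^2$ from \eqref{kahtwistplus} and $\abs{N_J}^2$ from \eqref{nijcompplus} as explicit polynomials in the curvature quantities $\qt{ab},\qq{ab}$, and compares them with the expansion \eqref{squarejplus} of $\abs{\nabla J}^2$. You instead establish the frame-independent identity $\abs{d\omega}^2=3\abs{\nabla J}^2+6C$, with $C=\sum_{p,q,u}T_{pqu}T_{qup}$, valid on every almost Hermitian manifold (this derivation, your Levi-Civita expression for $N_J$, and the anti-$J$-invariance and transvection identities for $T$ are all correct), thereby reducing \eqref{squarenablaj} to the single scalar identity $\abs{N_J}^2=-16C$; your nearly K\"ahler observation correctly certifies that this identity is not universal, so the twistor components must --- and need only --- enter at this one point. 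What your route buys: the sole input is the table \eqref{nablajplus} for $\nabla J$ (neither \eqref{kahtwistplus} nor \eqref{squarejplus} is needed), and the proof pinpoints exactly where the geometry of $(Z,g_t,J)$ is used. What the paper's route buys: the three explicit curvature polynomials themselves, stated once and reusable elsewhere. Although you only sketch the terminal computation, it does close: setting $Q:=\pa{\qt{13}+\qt{42}-\qq{14}-\qq{23}}^2+\pa{\qt{14}+\qt{23}+\qq{13}+\qq{42}}^2$, substitution of \eqref{nablajplus} gives $C=-\tfrac{t^2}{2}\,Q$ (the purely horizontal quantities $\qt{12},\qq{12},\qt{34},\qq{34}$ cancel between the two contraction types you identified), while $\abs{N_J}^2=8t^2Q$ by \eqref{nijcompplus}, so $\abs{N_J}^2=-16C$ as required. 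One caution should you cross-check against the printed formulas: \eqref{squarejplus} contains a misprint --- the summand $\pa{\qq{14}-\qt{13}}^2$ duplicates $\pa{\qt{13}-\qq{14}}^2$ and should read $\pa{\qt{23}+\qq{13}}^2$ --- and only with this correction are the three computations mutually consistent.
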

\begin{proof}
By direct computation, we obtain
\begin{align*}
	\abs{d\omega}^2=6t^2&\left[\qq{12}^2+\qt{12}^2+\qq{13}^2+\pa{\qt{13}-
		\dfrac{1}{t^2}}^2+\pa{\qq{14}-
		\dfrac{1}{t^2}}^2+\right. \\
	&+\qt{14}^2+\pa{\qq{23}-
		\dfrac{1}{t^2}}^2+\qt{23}^2+\qq{42}^2+\pa{\qt{42}-
		\dfrac{1}{t^2}}^2+\\
	&+\left.\qq{34}^2+\qt{34}^2\vphantom{\pa{\qq{14}-
			\dfrac{1}{t^2}}^2}\right];
\end{align*}
\begin{align*}
	\abs{N_J}^2=8t^2[(\qt{13}+\qt{42}-\qq{14}-\qq{23})^2+
	(\qt{14}+\qt{23}+\qq{13}+\qq{42})^2].
\end{align*}
Comparing these expressions with \eqref{squarejplus}, it is easy to obtain
\eqref{squarenablaj}.
\end{proof}

It is worth to point out that Theorem \ref{squarejtwist} allows to give
alternate proofs of some well-known results due to Mu\v{s}karov (see
\cite{mus}), exploiting the quadratic
relations among the invariants listed by
Gray and Hervella (see \cite{grayher}). For instance, we can reformulate the
following

\begin{proposition}
Let $(M,g)$ be a Riemannian manifold and $(Z,g_t,J)$ be its twistor space. Then,
\[
(Z,g_t,J)\in\NK\cup\AK\Lra(Z,g_t,J)\in\K,
\]
where $\NK$, $\AK$ and $\K$ denote the classes of nearly K\"{a}hler, almost
K\"{a}hler and K\"{a}hler manifolds, respectively.
\end{proposition}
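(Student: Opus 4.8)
The plan is to read off both classes from the pointwise identity of Theorem \ref{squarejtwist}, namely \eqref{squarenablaj}, $\abs{\nabla J}^2 = \frac13\abs{d\omega}^2 + \frac18\abs{N_J}^2$, after recording how $\abs{d\omega}^2$ and $\abs{N_J}^2$ are each controlled by $\abs{\nabla J}^2$ inside the two Gray--Hervella classes. Since a K\"ahler structure has $\nabla J\equiv 0$ and therefore lies in every Gray--Hervella class, the inclusion $\K\subseteq\NK\cup\AK$ is immediate; the content of the statement is the reverse implication, and I would establish $\nabla J\equiv 0$ separately in the nearly K\"ahler and in the almost K\"ahler case.

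For the nearly K\"ahler case I would use the defining property that $\nabla\omega$ (equivalently $\nabla J$) is totally skew-symmetric. Then $d\omega = 3\nabla\omega$ as three-forms, so that, in the full-index normalization used in Theorem \ref{squarejtwist}, $\abs{d\omega}^2 = 9\abs{\nabla J}^2$. Substituting into \eqref{squarenablaj} gives $\abs{\nabla J}^2 = 3\abs{\nabla J}^2 + \frac18\abs{N_J}^2$, whence $2\abs{\nabla J}^2 + \frac18\abs{N_J}^2 = 0$; since both summands are nonnegative, we conclude $\nabla J\equiv 0$, i.e. $(Z,g_t,J)\in\K$. This half needs no information about $N_J$ whatsoever.

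For the almost K\"ahler case the hypothesis is $d\omega = 0$, so \eqref{squarenablaj} reduces to $\abs{\nabla J}^2 = \frac18\abs{N_J}^2$, and the task becomes to compute $\abs{N_J}^2$ in terms of $\abs{\nabla J}^2$. Here I would use that $\AK$ is a subclass of the quasi-K\"ahler class, for which the Nijenhuis tensor is expressed through $\nabla J$ by $N(X,Y) = 2J\big[(\nabla_Y J)X - (\nabla_X J)Y\big]$. Taking square norms and using that $J$ is an isometry, $\abs{N_J}^2 = 8\abs{\nabla J}^2 - 8\sum_{p,q}\pair{(\nabla_{e_p}J)e_q,(\nabla_{e_q}J)e_p}$; the remaining inner-product sum is then evaluated using only $d\omega = 0$ (the cyclic sum of the components of $\nabla\omega$ vanishes), which after relabelling of indices yields $\sum_{p,q}\pair{(\nabla_{e_p}J)e_q,(\nabla_{e_q}J)e_p} = \frac12\abs{\nabla J}^2$, hence $\abs{N_J}^2 = 4\abs{\nabla J}^2$. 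Plugging this into $\abs{\nabla J}^2 = \frac18\abs{N_J}^2$ gives $\abs{\nabla J}^2 = \frac12\abs{\nabla J}^2$, so again $\nabla J\equiv 0$ and $(Z,g_t,J)\in\K$.

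I expect the main obstacle to be bookkeeping rather than geometry: one must ensure that the normalizations of $\abs{d\omega}^2$, $\abs{N_J}^2$ and $\abs{\nabla J}^2$ fixed in Theorem \ref{squarejtwist} agree with those used to derive the class-specific proportionalities (the factors $9$ and $4$ above), since it is precisely these constants---together with the twistor-specific coefficients $\frac13$ and $\frac18$---that force the resulting (in)equalities to collapse to $\nabla J\equiv 0$. These proportionalities are exactly the quadratic relations among the Gray--Hervella invariants \cite{grayher}, so once the conventions are pinned down the argument closes at once.
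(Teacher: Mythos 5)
Your proposal is correct and follows essentially the same route as the paper: both proofs substitute the class-specific quadratic relations among $\abs{\nabla J}^2$, $\abs{d\omega}^2$ and $\abs{N_J}^2$ into the twistor identity \eqref{squarenablaj} of Theorem \ref{squarejtwist} and let the resulting constants force $\nabla J\equiv 0$. The only difference is cosmetic: the paper cites Table IV of \cite{grayher} for the relations $\abs{d\omega}^2=9\abs{\nabla J}^2$ (nearly K\"ahler) and $d\omega=0$, $\abs{N_J}^2=4\abs{\nabla J}^2$ (almost K\"ahler), whereas you rederive them from the total skew-symmetry of $\nabla\omega$ and from the quasi-K\"ahler expression of $N_J$ together with the cyclic identity coming from $d\omega=0$.
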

\begin{proof}
One direction is trivial. Let us
suppose $Z\in\NK$. Then, by table IV in \cite{grayher}, we know that
\[
\abs{\nabla J}^2=\dfrac{1}{9}\abs{d\omega}^2;
\]
inserting this equation in \eqref{squarenablaj}, it is easy to obtain
\[
\abs{d\omega}^2=\abs{N_J}^2=0\Ra\abs{\nabla J}^2=0,
\]
that is, $Z\in K$. Now, let us suppose $Z\in\AK$. By the same table,
we have that
\[
\abs{\nabla J}^2=\dfrac{1}{4}\abs{N_J}^2\mbox{ and } \abs{d\omega}^2=0;
\]
again, inserting these equations in \eqref{squarenablaj}, we obtain
$\abs{\nabla J}^2=0$, i.e. $Z\in\K$.
\end{proof}

In fact, by analogous calculations, we can prove more.
Let us consider the sixteen classes of almost
Hermitian manifolds listed in \cite{grayher}. Then, by Theorem
\ref{squarejtwist}, we can obtain an alternate proof of the following
statement, which was first proven by Mu\v{s}karov (see \cite{mus}):
\begin{theorem}
Let $(M,g)$ be a Riemannian four-manifold and $(Z,g_t,J)$ be its twistor
space. If $(Z,g_t,J)$ belongs to one of the first fifteen classes of almost
Hermitian manifolds, then $(Z,g_t,J)\in\H$; consequently, $(M,g)$ is
self-dual.
\end{theorem}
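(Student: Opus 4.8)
The plan is to leverage Theorem \ref{squarejtwist}, which gives the pointwise identity $\abs{\nabla J}^2=\tfrac13\abs{d\omega}^2+\tfrac18\abs{N_J}^2$, together with the Gray--Hervella classification and its associated quadratic invariant relations (see \cite{grayher}). The key structural observation is that the sixteen Gray--Hervella classes are cut out by linear and quadratic relations among the tensorial components of $\nabla J$, and that each of the first fifteen classes (i.e. everything except the full class containing a generic almost Hermitian structure) imposes, via those relations, a proportionality of the form $\abs{\nabla J}^2=\lambda\abs{d\omega}^2+\mu\abs{N_J}^2$ with coefficients $(\lambda,\mu)$ strictly different from the twistor coefficients $(\tfrac13,\tfrac18)$. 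The recipe is exactly the one carried out in the preceding Proposition for $\NK$ and $\AK$: substitute the class-specific relation into \eqref{squarenablaj} and deduce that $\abs{\nabla J}^2$, $\abs{d\omega}^2$ and $\abs{N_J}^2$ must all vanish, forcing $(Z,g_t,J)\in\H$.

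Concretely, I would proceed class by class (or, more efficiently, stratum by stratum along the Gray--Hervella lattice $\mathcal{W}_1\oplus\mathcal{W}_2\oplus\mathcal{W}_3\oplus\mathcal{W}_4$). For each of the fifteen proper classes one reads off from Gray--Hervella's table the constraint it places on the components of $\nabla J$; these decompose $\nabla\omega$ into the nearly-K\"ahler part (governing $d\omega$ through its totally skew component), the almost-K\"ahler/symplectic part, and the Hermitian part $\mathcal{W}_4$ (the Lee-form contribution). The point is that $\H=\mathcal{W}_4$ is the class of \emph{Hermitian} manifolds, for which $N_J\equiv 0$ and $\nabla J$ is controlled purely by the Lee form; belonging to any class strictly smaller than the whole lattice but not contained in the relevant sublattice produces a rigid linear relation between $\abs{d\omega}^2$ and $\abs{N_J}^2$ that is incompatible with \eqref{squarenablaj} unless both vanish. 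Once $\abs{N_J}^2=0$ everywhere, integrability of $J$ follows, and by the Atiyah--Hitchin--Singer theorem (invoked throughout the paper) integrability of the twistor almost complex structure $J=J^+$ forces $(M,g)$ to be self-dual.

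The routine but necessary bookkeeping is to verify that for every one of the fifteen classes the induced coefficient pair differs from $(\tfrac13,\tfrac18)$, so that substitution genuinely collapses all three norms to zero rather than yielding a nontrivial identity. I would organize this by noting that the only way $(Z,g_t,J)$ can sit in a proper Gray--Hervella class while \emph{not} being forced into $\H=\mathcal{W}_4$ is to satisfy one of the defining relations of $\mathcal{W}_1,\mathcal{W}_2,\mathcal{W}_3$ or a direct sum thereof; each such relation, when fed into \eqref{squarenablaj}, yields $\abs{d\omega}^2=\abs{N_J}^2=0$, hence $\abs{\nabla J}^2=0$ and $J$ parallel, placing $Z$ trivially in $\H$. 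The classes that already lie inside $\H$ (the Hermitian ones with $\mathcal{W}_4$-type data) require separately checking that the twistor coefficients are consistent only when the non-Hermitian components vanish.

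The main obstacle I anticipate is twofold. First, there is the delicate matter of sign and normalization conventions: the Gray--Hervella quadratic invariants are stated for $\abs{\nabla\omega}^2$ or for the $U(n)$-irreducible components of $\nabla\omega$, and one must carefully translate these into the $\abs{\nabla J}^2$, $\abs{d\omega}^2$, $\abs{N_J}^2$ normalization used in Theorem \ref{squarejtwist} so that the comparison of coefficients is exact. Second, and more substantively, the argument must be uniform across all fifteen classes rather than ad hoc; the real content is the claim that the twistor coefficient vector $(\tfrac13,\tfrac18)$ is \emph{generic} with respect to the Gray--Hervella relations, meaning it lies on none of the proper-class constraint loci except at the origin. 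Establishing this genericity cleanly — ideally by exhibiting the four basic $U(n)$-component norms and showing that the twistor structure forces a strictly positive combination whenever any non-Hermitian component survives — is where the essential work lies, and it is what elevates the proof above a sixteen-fold repetition of the $\NK$/$\AK$ computation already displayed.
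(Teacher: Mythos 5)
Your plan follows the same route the paper itself indicates: its ``proof'' of this theorem is precisely the remark that the $\NK$/$\AK$ argument of the preceding Proposition extends ``by analogous calculations'' using Theorem \ref{squarejtwist} and the Gray--Hervella relations, so the strategy is not the issue. The issue is that the uniform mechanism you propose is false. First, a bookkeeping error: $\H$ is the Hermitian class $\mathcal{W}_3\oplus\mathcal{W}_4$, cut out by $N_J\equiv 0$, not $\mathcal{W}_4$ (the Lee-form class $\mathcal{W}_4$ is a proper subclass of $\H$). Second, and fatally, your central claim --- that for each proper class the relevant relation fed into \eqref{squarenablaj} forces $\abs{\nabla J}^2=\abs{d\omega}^2=\abs{N_J}^2=0$, because $(\tfrac13,\tfrac18)$ is ``generic'' --- is contradicted by examples supplied by the paper itself. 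Let $(M,g)$ be self-dual and not Einstein (or Einstein, self-dual with $S\neq 12/t^2$). Then $N_J\equiv 0$ by \eqref{nijcompplus}, and self-duality also forces $\qt{12}+\qt{34}=\qq{12}+\qq{34}=0$, so $\delta\omega_+\equiv 0$ by \eqref{codifftwist}: thus $(Z,g_t,J)$ lies in the class $\mathcal{W}_3$, hence in several of the first fifteen classes, yet $\abs{\nabla J}^2=\tfrac13\abs{d\omega}^2\neq 0$. The twistor coefficients are therefore realized \emph{identically} on the whole $\mathcal{W}_3$-locus; no collapse occurs, and none should: the theorem asserts $Z\in\H$, not $Z\in\K$, and the stronger conclusion is simply false.

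This error lands exactly on the cases that carry the content of the theorem, namely the mixed classes containing a $\mathcal{W}_1$- or $\mathcal{W}_2$-summand together with $\mathcal{W}_3$ or $\mathcal{W}_4$ (for subclasses of $\H$ the claim is a tautology, and for $\mathcal{W}_1$, $\mathcal{W}_2$, $\mathcal{W}_1\oplus\mathcal{W}_2$ your substitution does work, as in the paper's Proposition). There one must kill \emph{only} the $\mathcal{W}_1$- and $\mathcal{W}_2$-components of $\nabla\omega$, and a single scalar identity cannot detect which irreducible component vanishes: writing $\nabla\omega=W_1+W_2+W_3+W_4$ and using Schur's lemma, \eqref{squarenablaj} is equivalent to a pointwise relation $c_1\abs{W_1}^2=c_2\abs{W_2}^2+c_4\abs{W_4}^2$ with $c_1,c_2,c_4>0$ and \emph{no} $\abs{W_3}^2$-term (that term drops out precisely because Hermitian twistor spaces satisfy the identity). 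Imposing $W_1=0$ does collapse $W_2$ and $W_4$; but imposing, say, $W_2=0$ (the class $\mathcal{W}_1\oplus\mathcal{W}_3\oplus\mathcal{W}_4$) or $W_4=0$ (semi-K\"ahler) merely leaves a proportionality admitting nonzero solutions, so no contradiction arises at the level of norms. To conclude in those cases you must descend to frame-level data --- the explicit components \eqref{nijcompplus}, \eqref{kahtwistplus}, \eqref{codifftwist}, the observation that a class condition on $Z$ imposes the corresponding curvature identities for \emph{every} local frame, and the frame-change argument via \eqref{transfabc} --- which is how the paper's other rigidity proofs (and Mu\v{s}karov's original argument) actually force self-duality. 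A useful reduction you could add: it suffices to treat the four maximal proper classes, since each of the fifteen is contained in one of them.
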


\subsection{Laplacian of the almost complex structures}

In this section, we consider the Laplacian $\Delta_JJ$
of the almost complex structure $J$ (for the definition and the
components, see \ref{appendc}). We say that
$J$ is \textit{harmonic} if $\Delta_JJ=0$ (see also
\cite{wood} and \cite{wood2}). By a direct inspection of the components
listed in \ref{appendc}, we can provide an alternate proof to a
well-known result, due to Davidov and Mu\v{s}karov (see \cite{davmus2}):
\begin{theorem}
Let $(M,g)$ be a Riemannian four-manifold and $(Z,g_t,J)$ be its
twistor space. Then, $J$ is harmonic if and only if $(M,g)$ is
self-dual.
\end{theorem}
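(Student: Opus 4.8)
The plan is to argue exactly as in the previous proofs of this paper, that is by direct inspection of the explicit components of $\Delta_J J$ recorded in \ref{appendc}, rewriting the harmonicity condition $\Delta_J J=0$ in terms of the quantities $\qd{ab},\qt{ab},\qq{ab}$ (equivalently, of the entries of the matrices $A$ and $B$) and showing that the resulting system of equations is equivalent to the self-duality condition $A=\frac{S}{12}I_3$. First I would record the components of $\Delta_J J$ from \ref{appendc}. Since the covariant derivative $\nabla J$ of the Atiyah--Hitchin--Singer structure is algebraic in the curvature of $(M,g)$ --- its components, listed in \eqref{nablajplus}, are built from the $\qt{ab}$ and $\qq{ab}$ together with the constants $1/t^2$ --- the Laplacian $\Delta_J J$ is likewise expressible through the curvature of $M$, and harmonicity amounts to the vanishing of a finite set of polynomial expressions in these curvature quantities.

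The key step is then to isolate the components of $\Delta_J J$ --- I expect the ones carrying a fibre index $5$ or $6$ --- whose vanishing forces the off-diagonal entries of $A$ to vanish and its diagonal entries to coincide. Because self-duality is a frame-independent condition and $A$ transforms by $\tilde{A}=a_+^{-1}Aa_+$ under \eqref{transfabc}, I would diagonalize $A$ at a point and then permute and rotate its eigenvalues by suitable $a_+\in SO(3)$, exactly as in the proofs of Theorem \ref{locconftwist} and Theorem \ref{complexlinricci}; this upgrades the pointwise relations to $A_{11}=A_{22}=A_{33}$ with all off-diagonal entries zero, i.e. $A=\frac{S}{12}I_3$, which is self-duality. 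Conversely, substituting $A=\frac{S}{12}I_3$ into the formulas of \ref{appendc}, while keeping $B$ and $S$ arbitrary, should make every component of $\Delta_J J$ vanish identically, giving the reverse implication.

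The main obstacle will be verifying that the harmonicity system constrains only the anisotropy of $A$ (the self-dual Weyl part $W^+$) and imposes no spurious extra conditions on $B$ (the trace-free Ricci) or on the scalar curvature $S$. Unlike the Einstein, K\"{a}hler--Einstein and Bochner-parallel characterizations obtained earlier --- which additionally force $B=0$ and pin down a specific value of $S$ --- harmonicity of $J$ must turn out to be insensitive to both $B$ and $S$, so that it is equivalent to self-duality alone. Confirming this insensitivity, by checking that all the $B$- and $S$-dependent terms in the components of $\Delta_J J$ cancel once $A$ is a scalar matrix, is where the computation has to be carried out with care.
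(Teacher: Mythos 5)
Your backward direction is essentially the paper's argument and would go through: the harmonicity system does yield quadratic equations in the entries of $A$, and diagonalizing $A$ and rotating by suitable $a_+\in SO(3)$ via \eqref{transfabc} forces $A$ to be a scalar matrix. (One small correction there: the useful algebraic equations come from the \emph{horizontal} components $\Delta_JJ^1_3$, $\Delta_JJ^1_4$ — which give \eqref{eqlaplself} — not from the components carrying a fibre index $5$ or $6$, as you expect.)

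The genuine gap is in your forward direction, and it stems from the claim that ``$\Delta_JJ$ is likewise expressible through the curvature of $M$, and harmonicity amounts to the vanishing of a finite set of polynomial expressions in these curvature quantities.'' This is false: while $\nabla J$ is algebraic in the curvature, taking one more covariant derivative produces \emph{first derivatives} of the curvature, and indeed the fibre-index components are divergence-type expressions, e.g.
\begin{equation*}
\Delta_JJ^1_5 = -\frac{1}{2}t\sum_{a=1}^4\pa{\qt{2a}+\qq{1a}}_{,a},
\end{equation*}
as listed in \ref{appendc}. Consequently, ``substituting $A=\frac{S}{12}I_3$ into the formulas while keeping $B$ and $S$ arbitrary'' cannot make these components vanish: pointwise knowledge of $A$ does not control derivatives of the individual curvature components $\qt{ab}$, $\qq{ab}$, and the local expressions involve $B$- and $dS$-terms that do not cancel algebraically. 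What closes this gap in the paper is the identity \eqref{laplweyldiv}, which recognizes $\Delta_JJ^1_5,\Delta_JJ^1_6,\Delta_JJ^3_5,\Delta_JJ^3_6$ as combinations of components of $\delta W^+$ from \eqref{weyldiv}; since self-duality means the relevant Weyl block vanishes \emph{identically as a tensor field}, its divergence vanishes too, and this identification (whose derivation rests on the second Bianchi identity) is exactly the non-algebraic input your plan is missing. Without it, the ``insensitivity to $B$ and $S$'' you flag as the delicate point cannot be verified by the pointwise cancellation you propose.
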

\begin{proof}
One direction is trivial; indeed, since $(M,g)$ is self-dual,
\begin{align*}
	\Delta_JJ_3^1&=2A_{12}+\dfrac{1}{2}t[N_{14}^5A_{13}-
	N_{13}^5A_{12}]=0;\\
	\Delta_JJ_4^1&=2A_{13}+\dfrac{1}{2}t[N_{14}^5A_{12}-
	N_{13}^5A_{13}]=0,
\end{align*}
where the $N_{pq}^r$ are the components of the Nijenhuis tensor
of $J$ (see \eqref{nijcompplus}).
Moreover, as an immediate consequence of the self-duality condition and
the second Bianchi identity, by \eqref{weyldiv} and \eqref{laplweyldiv}
it is easy to show that
\[
\Delta_JJ_5^1=\Delta_JJ_6^1=\Delta_JJ_5^3=\Delta_JJ_6^3=0.
\]
Conversely, let us suppose $\Delta_JJ=0$. By the explicit expression
of $\Delta_JJ_v^u$ and $N_{pq}^r$ listed in \eqref{lapltwist} and in
\eqref{nijcompplus}, respectively, we obtain the global equations
\begin{align} \label{eqlaplself}
	2A_{12}-2t^2A_{23}A_{13}-t^2A_{12}(A_{33}-A_{22})&=0;\\
	2A_{13}-2t^2A_{23}A_{12}-t^2A_{13}(A_{33}-A_{22})&=0. \notag
\end{align}
Again, let us choose a local orthonormal frame $e\in O(M)_-$ such that
$A$ is a diagonal matrix. By the transformation law for the matrix
$A$ defined in \eqref{transfabc}, choosing the matrix
\[
a_+=
\pmat{
	\dfrac{1}{\sqrt{2}} & -\dfrac{1}{\sqrt{2}} & 0\\
	\dfrac{1}{\sqrt{2}} & \dfrac{1}{\sqrt{2}} & 0\\
	0 & 0 & 1
}
\]
and using the equations \eqref{eqlaplself}, we obtain
\[
(A_{22}-A_{11})\sq{\dfrac{2}{t^2}-A_{33}+\dfrac{1}{2}(A_{11}+A_{22})}=0.
\]
Let us suppose that $A_{11}\neq A_{22}$; this implies immediately that
\[
A_{33}=\dfrac{1}{2}(A_{11}+A_{22})+\dfrac{2}{t^2}.
\]
With similar computations, it is easy to show that, if this equation
holds, by \eqref{eqlaplself} we must have
\[
A_{11}-A_{22}=\dfrac{4}{t^2}\vee A_{11}-A_{22}=-\dfrac{4}{3t^2}.
\]
In both cases, choosing the matrix
\[
a_+=
\pmat{
	\dfrac{1}{2} & -\dfrac{\sqrt{3}}{2} & 0\\
	\dfrac{\sqrt{3}}{2} & \dfrac{1}{2} & 0\\
	0 & 0 & 1
}
\]
and applying the transformation law for $A$, the equations
\eqref{eqlaplself} lead to
\[
\dfrac{\sqrt{3}}{t^2}=0\vee \dfrac{5}{3\sqrt{3}t^2}=0,
\]
which clearly are contradictions. Thus, $A_{11}=A_{22}$; by
analogous computations, we easily obtain $A_{22}=A_{33}$.
Therefore, we can conclude that $(M,g)$ is self-dual.
\end{proof}

\subsection{Nijenhuis tensors of $J$ and $\JJ$}

Now, let us consider
the Nijenhuis tensors $N_J$ and $N_{\JJ}$
associated to $J$ and $\JJ$ respectively. The expression of the components
$J_{q,t}^p$ and $\JJ_{q,t}^p$ of the covariant derivatives of $J$ and $\JJ$
are listed in \eqref{nablajplus} and \eqref{nablajmin},
while the components of the tensors $N_J$ and
$N_{\JJ}$ are listed in \eqref{nijcompplus} and \eqref{nijcompmin},
together with the components of
their covariant derivatives and their divergences in
\eqref{nablanijplus} and \eqref{nablanijmin}. Thus, let us
recall the local expression of the divergences of $N_J$ with
respect to a local orthonormal coframe and its dual frame:
\begin{equation} \label{nijdiver}
\diver N_J=N_{pq,t}^t\theta^p\otimes\theta^q \qquad
\ol{\diver} N_J=N_{pt,t}^r\theta^p\otimes e_r,
\end{equation}
where
$\nabla N_J=N_{tq,s}^p\theta^s\otimes\theta^t\otimes\theta^q\otimes e_p$.
The following Theorem
is a generalization of a result due to Atiyah, Hitchin and Singer, which
characterizes self-dual four-manifolds as the ones whose twistor space
is Hermitian with respect to $J$ (see \cite{athisin} and \cite{cdm}):
\begin{theorem} \label{divnijzero}
Let $(M,g)$ be a Riemannian four-manifold and
$(Z,g_t,J)$ be its twistor space.
Then, $(M,g)$ is self-dual if and only if
$\diver{N_J}\equiv0\vee\ol{\diver}{N_J}\equiv0$.
\end{theorem}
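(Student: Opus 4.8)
The plan is to prove both directions of the equivalence using the explicit component formulas already available in the appendices, following closely the frame-manipulation strategy employed in Theorem \ref{complexlinricci} and Theorem \ref{einstwist}. For the forward direction, I would assume $(M,g)$ is self-dual, which means $A=\frac{S}{12}I_3$ in every positively oriented frame. In this situation the matrices $A$ and $B$ simplify dramatically: the off-diagonal entries of $A$ vanish and the self-duality constraints force the Nijenhuis components $N_{pq}^r$ (listed in \eqref{nijcompplus}) to take a very rigid form. I would then substitute these into the divergence expressions $N_{pq,t}^t$ and $N_{pt,t}^r$ collected in \eqref{nablanijplus}, and verify by direct inspection that both $\diver N_J$ and $\ol{\diver} N_J$ vanish identically. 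Since the original Atiyah--Hitchin--Singer result already guarantees that self-duality is equivalent to integrability of $J$ (hence $N_J\equiv 0$), this forward implication is essentially automatic: if the tensor itself vanishes, so do its divergences.

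The substantive content is the converse, so the bulk of the proof addresses the implication $\diver N_J\equiv 0 \;(\text{or}\; \ol{\diver} N_J\equiv 0)\Rightarrow(M,g)$ self-dual. I would treat the two hypotheses in parallel, since each is a global tensorial condition on $Z$ and therefore, by pulling back along sections of the twistor bundle and invoking frame-independence, yields algebraic identities on the entries of $A$ and $B$ valid for every $e\in O(M)_-$. The strategy is to extract from the vanishing of the relevant components of $\diver N_J$ (respectively $\ol{\diver} N_J$) a system of polynomial equations in the $A_{ij}$, then diagonalize $A$ and exploit the transformation law \eqref{transfabc} with carefully chosen $a_+\in SO(3)$ to permute and recombine the diagonal entries. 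This is exactly the mechanism used in the self-duality arguments of Theorems \ref{complexlinricci} and \ref{einstwist}: one shows first that $A_{11}^2=A_{22}^2=A_{33}^2$ or some comparable symmetric relation, and then rules out the non-scalar sign patterns by applying a rotation such as $a_+=\pmat{0&-1&0\\ \frac{1}{\sqrt 2}&0&-\frac{1}{\sqrt 2}\\ \frac{1}{\sqrt 2}&0&\frac{1}{\sqrt 2}}$ and deriving a contradiction unless the eigenvalues coincide. The conclusion $A_{11}=A_{22}=A_{33}$ in a diagonalizing frame, promoted to all frames via \eqref{transfabc}, is precisely self-duality.

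The main obstacle I anticipate is twofold. First, the divergence components in \eqref{nablanijplus} involve covariant derivatives $\qt{ab}_{,c}$ and $\qq{ab}_{,c}$ of the curvature, so the resulting identities are not purely algebraic in $A$ and $B$ but also constrain their first derivatives; I would need the second Bianchi identity together with the twistor curvature expressions \eqref{riemtwist} to eliminate these derivative terms and reduce to a genuinely algebraic system. Second, the two hypotheses $\diver N_J\equiv 0$ and $\ol{\diver} N_J\equiv 0$ are \emph{a priori} different, and it is conceivable that one is strictly weaker than the other; the delicate point is to confirm that \emph{each} alone suffices to force self-duality, which may require choosing different index combinations $(p,q)$ and different rotations $a_+$ in the two cases. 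I expect the $\ol{\diver}$ case to be the more demanding of the two, since contracting over the frame index rather than the coframe index typically mixes the self-dual and anti-self-dual blocks less transparently.

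Finally, once the algebraic system is in hand, the termination of the argument is routine: diagonalize $A$, use the sign-elimination trick to show all three eigenvalues agree, and invoke frame-independence to conclude $A=\frac{S}{12}I_3$ globally, i.e. $(M,g)$ is self-dual. I would close by remarking that this recovers the Atiyah--Hitchin--Singer characterization, since self-duality is equivalent to $N_J\equiv 0$, and the present theorem strengthens it by showing that the mere vanishing of \emph{either} divergence of $N_J$ already detects self-duality.
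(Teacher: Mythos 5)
Your proposal follows essentially the same route as the paper's proof: the forward direction is the same trivial observation that self-duality gives $N_J\equiv 0$ via Atiyah--Hitchin--Singer, and for the converse you correctly identify the paper's mechanism of extracting frame-independent algebraic identities in the entries of $A$, diagonalizing, and applying rotations $a_+\in SO(3)$ through \eqref{transfabc} to force $A_{11}=A_{22}=A_{33}$. The only remark worth making is that neither of your two anticipated obstacles actually arises: the components of $\diver N_J$ in \eqref{divnijplus1} and the tangential components $N_{at,t}^b$ of $\ol{\diver}N_J$ in \eqref{divnijplus2} are already purely algebraic in the curvature (no second-Bianchi elimination of the terms $\qt{ab}_{,c}$, $\qq{ab}_{,c}$ is needed, one simply uses only those components), and the $\ol{\diver}N_J$ case is in fact the \emph{easier} of the two, since in a diagonalizing frame the single identity $N_{1t,t}^1+N_{2t,t}^2=0$ collapses to $(A_{33}-A_{22})^2=0$ with no rotation argument required beyond permuting eigenvalues.
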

\begin{proof}
One direction is trivial: indeed, we know that, if $(M,g)$ is self-dual,
$N_J\equiv 0$. Therefore, it is easy to see that
$\diver{N_J}=\ol{\diver}{N_J}\equiv0$.

Conversely, let us suppose that $\diver N_J\equiv 0$. By
\eqref{nijcompplus} and \eqref{divnijplus1}, we have
\[
\begin{cases}
	A_{13}\pa{A_{33}-A_{22}-\dfrac{4}{t^2}}+2A_{23}A_{12}&=0\\
	A_{12}\pa{A_{33}-A_{22}+\dfrac{4}{t^2}}-2A_{23}A_{13}&=0
\end{cases}.
\]
Let us choose a frame $e\in O(M)_-$ such that $A$ is diagonal. By
\eqref{transfabc}, choosing
\[
a_+=
\pmat{
	\frac{1}{\sqrt{2}} & -\frac{1}{\sqrt{2}} & 0\\
	\frac{1}{\sqrt{2}} & \frac{1}{\sqrt{2}} & 0\\
	0 & 0 & 1
},
\]
for the transformed matrix we obtain
\[
(A_{22}-A_{11})\sq{\dfrac{1}{2}(A_{11}+A_{22})-\dfrac{4}{t^2}-A_{33}}=0,
\]
that is,
\[
A_{11}=A_{22} \vee A_{33}=\dfrac{1}{2}(A_{11}+A_{22})-\dfrac{4}{t^2}.
\]
Let us suppose $A_{11}\neq A_{22}$. By choosing
\[
a_+=
\pmat{
	\frac{1}{2} & -\frac{\sqrt{3}}{2} & 0\\
	\frac{\sqrt{3}}{2} & \frac{1}{2} & 0\\
	0 & 0 & 1
}
\]
and applying again \eqref{transfabc}, it is easy to see that
$A_{11}=A_{22}$, which is a contradiction. Thus, with respect to $e$
we have that $A_{11}=A_{22}$. By a similar argument, one can easily show
that $A_{11}=A_{33}$, i.e. $(M,g)$ is self-dual.

Now, let us consider the case $\ol{\diver}N_J\equiv0$ and let us
choose again a frame $e\in O(M)_-$ such that $A$ is diagonal.
Then, since by hypothesis
\[
N_{1t,t}^1+N_{2t,t}^2=0,
\]
we can rewrite this equation as
\[
(A_{33}-A_{22})^2=0,
\]
which means that $A_{22}=A_{33}$. The equation above holds for
every frame with respect to which the matrix $A$ is diagonal;
thus, similarly we can obtain $A_{11}=A_{22}$, i.e.
$(M,g)$ is self-dual.
\end{proof}

As a consequence, it is immediate to show the validity of
\begin{cor} \label{nablanijzero}
	$(M,g)$ is self-dual if and only if $\nabla N_J\equiv 0$.
\end{cor}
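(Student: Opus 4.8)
The plan is to obtain this corollary essentially for free from Theorem \ref{divnijzero}, by observing that both divergences of $N_J$ are pointwise traces of the full covariant derivative $\nabla N_J$. Thus no new geometric input is required: all the analytic content has already been packaged into Theorem \ref{divnijzero}, and the corollary is a matter of correctly relating $\nabla N_J$ to its contractions.

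For the forward implication I would invoke the Atiyah--Hitchin--Singer characterization recalled just before Theorem \ref{divnijzero}: if $(M,g)$ is self-dual, then $(Z,g_t,J)$ is Hermitian with respect to $J$, so the Nijenhuis tensor vanishes identically, $N_J\equiv 0$. Since $\nabla N_J$ is then the covariant derivative of the zero tensor, it too vanishes identically, giving $\nabla N_J\equiv 0$.

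For the converse I would compare \eqref{nijdiver} with the expression $\nabla N_J=N_{tq,s}^p\theta^s\otimes\theta^t\otimes\theta^q\otimes e_p$ and note that the components $N_{pq,t}^t$ of $\diver N_J$ (and, equally, the components $N_{pt,t}^r$ of $\ol{\diver} N_J$) are obtained simply by tracing the components $N_{tq,s}^p$ of $\nabla N_J$. Consequently $\nabla N_J\equiv 0$ forces $\diver N_J\equiv 0$ (and, a fortiori, also $\ol{\diver} N_J\equiv 0$). Theorem \ref{divnijzero} then yields at once that $(M,g)$ is self-dual, completing the equivalence.

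The only subtlety worth making explicit is precisely this last observation---that vanishing of the full covariant derivative forces vanishing of each of its contractions---which is immediate from the index expressions in \eqref{nijdiver}. Since there is no further obstacle, I expect the whole argument to occupy just a few lines, phrased as ``$\nabla N_J\equiv 0 \Rightarrow \diver N_J\equiv 0$, whence Theorem \ref{divnijzero} applies,'' together with the trivial reverse direction via $N_J\equiv 0$.
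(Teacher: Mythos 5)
Your proposal is correct and matches the paper's intended argument exactly: the paper states the corollary as an immediate consequence of Theorem \ref{divnijzero}, precisely because $\diver N_J$ and $\ol{\diver}N_J$ are traces of $\nabla N_J$ (so $\nabla N_J\equiv 0$ forces them to vanish), while the forward direction follows from $N_J\equiv 0$ on self-dual manifolds via the Atiyah--Hitchin--Singer result. Nothing is missing.
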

We point out that the equation $\nabla N_J\equiv 0$ has been studied
in the wider context of almost Hermitian manifolds: for instance, Vezzoni
showed that any almost K\"{a}hler manifold that satisfies this condition
is, in fact, a K\"{a}hler manifold (see \cite{vezz}).

Note that, on the contrary, a simple inspection of the coefficients
listed in \eqref{nablanijmin} shows that $\nabla N_{\JJ}$ never vanishes
(the fact that $\JJ$ is never integrable was first proven by Eells and
Salamon \cite{elsal} and it is apparent by \eqref{nijcompmin}; we also
mention that, if $(M,g)$ is Einstein and self-dual,
$N_{\JJ}$ is parallel with respect to the
Chern connection $\tilde{\nabla}$ defined on $(Z,g_t,\JJ)$, as
shown by Davidov, Grantcharov and Mu\v{s}karov \cite{davmuschern}); thus,
we cannot obtain an analogue of Corollary \ref{nablanijzero} for
$N_{\JJ}$.
However, we can consider the divergences $\diver N_{\JJ}$ and
$\ol{\diver}N_{\JJ}$ of $N_{\JJ}$ (the components are listed in
\eqref{divnijmin1} and \eqref{divnijmin2}) and state the following
\begin{theorem}
Let $(M,g)$ be a four-dimensional, self-dual Riemannian manifold.
If $M$ is Ricci-flat, then $\ol{\diver}N_{\JJ}=0$. If the scalar curvature
$S$ of $(M,g)$ is different from $6/t^2$, then the converse holds.
\end{theorem}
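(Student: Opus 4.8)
The plan is to argue directly from the explicit components of $\ol{\diver}N_{\JJ}$ recorded in \eqref{divnijmin2}, specialized to the self-dual setting. Since $(M,g)$ is self-dual, in every negatively oriented orthonormal frame $e\in O(M)_-$ the block $A$ of the curvature operator is scalar, $A=\frac{S}{12}I_3$; substituting the self-duality relations into \eqref{divnijmin2} reduces each component of $\ol{\diver}N_{\JJ}$ to a polynomial in $S$, $t$, and the entries $B_{ij}$ of the off-diagonal block $B$ (which encodes the trace-free part of the Ricci tensor). I would carry out this reduction first, obtaining a manageable system in the unknowns $S$ and $B$.

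For the forward implication, I would use that a self-dual four-manifold is Ricci-flat precisely when $B=0$ and $S=0$ (Einstein is equivalent to $B=0$, and Ricci-flatness adds $S=0$). Inserting these two conditions into the reduced expressions makes every component of $\ol{\diver}N_{\JJ}$ vanish identically, which proves $\ol{\diver}N_{\JJ}\equiv 0$.

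For the converse, assuming $\ol{\diver}N_{\JJ}\equiv 0$ together with $S\neq 6/t^2$, I would treat the resulting vanishing relations as global identities valid for every $e\in O(M)_-$ and exploit the freedom in \eqref{transfabc}: taking $a_+=I_3$ leaves the scalar block $A$ fixed while letting $a_-\in SO(3)$ act on the columns of $B$, exactly as in the proofs of Theorem \ref{complexlinricci} and Theorem \ref{einstwist}. By choosing suitable rotations $a_-$ I expect to peel off the entries of $B$ one at a time and conclude $B=0$, after which the remaining scalar relation should force $S=0$, i.e.\ Ricci-flatness. The hypothesis $S\neq 6/t^2$ is what keeps a critical coefficient (a factor of the form $S-6/t^2$, reflecting the Einstein value $6/t^2$ singled out in Theorem \ref{einstwist}) nonzero; at $S=6/t^2$ the system degenerates and the implication genuinely fails.

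The main obstacle will be the converse: controlling the combinatorics of the frame changes so that the divergence-free equations really do force first $B=0$ and then $S=0$, and verifying that $6/t^2$ is the single exceptional value at which the argument breaks down. Pinpointing exactly where the factor $S-6/t^2$ enters, and confirming that no other special value of $S$ obstructs the conclusion, is the delicate step; once that factorization is in hand, the remainder reduces to the same frame-manipulation bookkeeping already used repeatedly in Sections \ref{bochner} and \ref{kahleinst}.
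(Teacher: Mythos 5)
Your proposal is correct and follows essentially the same route as the paper: the paper's proof also proceeds by direct inspection of \eqref{divnijmin2} under self-duality, where each component of $\ol{\diver}N_{\JJ}$ is either $\pm t(\Sigma)_{,a}$ or a multiple of $\Sigma-\frac{2}{t^2}=\frac{1}{3}\pa{S-\frac{6}{t^2}}$, so that $S\neq 6/t^2$ makes this factor nowhere zero, the complementary factors (combinations of $S$ and the entries of $B$) must vanish, and the usual frame-change argument via \eqref{transfabc} then yields $B=0$ and $S=0$, i.e.\ Ricci-flatness. One cosmetic slip: with $a_+=I_3$ the rotation $a_-$ in $\tilde{B}=a_-^{-1}Ba_+$ mixes the rows of $B$, not the columns, and since $A$ is scalar under self-duality you may in fact take both $a_+$ and $a_-$ arbitrary.
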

\begin{proof}
Suppose that $(M,g)$ is Ricci-flat, i.e. $\ricc=0$. In particular, $(M,g)$
is Einstein, self-dual with $S=0$. This implies that
\[
\eqsist{
	\qt{12}&=\qt{14}=\qt{23}=\qt{34}=\qq{12}=\qq{13}=\qq{42}=\qq{34}=0;\\
	\qt{13}&=\qt{42}=\qq{14}=\qq{23}=0.
}.
\]
In particular, by the second equation we obtain
\[
\Sigma=0\Rightarrow (\Sigma)_{,a}=0, \mbox{ } \forall a=1,...,4.
\]
Therefore, $\ol{\diver}N_{\JJ}=0$, by direct inspection.

Now, let us suppose that $\ol{\diver}N_{\JJ}=0$ and $S\neq 6/t^2$. It is easily shown that the hypothesis on the scalar curvature leads to
\[
\Sigma-\dfrac{2}{t^2}\neq 0, \mbox{ on } O(M)_-.
\]
Indeed, if $\Sigma-\dfrac{2}{t^2}=0$, the matrix $A$ appearing in the decomposition of the Riemann curvature operator has the form
\[
A=
\pmat{
	\frac{S}{12} & 0 & 0\\
	0 & \frac{S}{12} & 0\\
	0 & 0 & \frac{1}{t^2}-\frac{S}{12}
}
\]
for every local orthonormal frame, since $M$ is self-dual. Then, we must have
\[
\frac{S}{12}=\frac{1}{t^2}-\frac{S}{12} \Leftrightarrow S=\dfrac{6}{t^2},
\]
which is a contradiction. Thus, by hypothesis, we must have
\begin{align*}
	\qt{13}+\qq{14}=
	\qt{13}+\qq{23}&=
	\qt{42}+\qq{23}=
	\qt{42}+\qq{14}=
	\qt{13}+\qt{42}=
	\qq{14}+\qq{23}=0\\
	\qt{12}&=\qt{34}=\qq{12}=\qq{34}=0.
\end{align*}
These equations immediately imply that $(M,g)$ is Einstein with $S=0$, i.e. $M$ is Ricci-flat.
\end{proof}

\appendix

\section{Divergence of the self-dual part of the Weyl tensor} \label{appweyl}

We list the components of the divergence of $W^+$ in a Riemannian
four-manifold.

\[
W^+_{abcd, e}\omega^e = dW^+_{abcd}-W^+_{ebcd}\omega^e_a-W^+_{aecd}\omega^e_b-W^+_{abed}\omega^e_c-W^+_{abce}\omega^e_d;
\]
\[
\pa{\delta W^+}_{abc} = \sum_{e=1}^4 W^+_{cbae, e}= \sum_{e=1}^4 W^+_{eabc, e}; \quad \pa{\delta W^+}_{abc}=-\pa{\delta W^+}_{acb}.
\]
By \eqref{riemq} and \eqref{covderq}, we obtain:
\small
\noindent
\begin{multicols}{2}
\noindent
\begin{align} \label{weyldiv}
\pa{\delta W^+}_{121} &= \frac{1}{4}\pa{\qd{12}_{, 2}+\qd{13}_{, 3}+\qd{14}_{, 4}} - \frac{1}{24}S_{2}; \\
\pa{\delta W^+}_{131} &= \frac{1}{4}\pa{\qt{12}_{, 2}+\qt{13}_{, 3}+\qt{14}_{, 4}} - \frac{1}{24}S_{3}; \notag\\
\pa{\delta W^+}_{141} &= \frac{1}{4}\pa{\qq{12}_{, 2}+\qq{13}_{, 3}+\qq{14}_{, 4}} - \frac{1}{24}S_{4}; \notag
\end{align}
\begin{align*}
\pa{\delta W^+}_{212} &= \frac{1}{4}\pa{\qd{12}_{, 1}-\qd{23}_{, 3}+\qd{42}_{, 4}} - \frac{1}{24}S_{1}; \notag\\
\pa{\delta W^+}_{213} &= \frac{1}{4}\pa{\qt{12}_{, 1}-\qt{23}_{, 3}+\qt{42}_{, 4}} - \frac{1}{24}S_{4}; \notag
\end{align*}
\begin{align*}
\pa{\delta W^+}_{214} &= \frac{1}{4}\pa{\qq{12}_{, 1}-\qq{23}_{, 3}+\qq{42}_{, 4}} + \frac{1}{24}S_{3}; \notag\\
\pa{\delta W^+}_{312} &= \frac{1}{4}\pa{\qd{13}_{, 1}+\qd{23}_{, 2}-\qd{34}_{, 4}} + \frac{1}{24}S_{4}; \notag\\
\pa{\delta W^+}_{313} &= \frac{1}{4}\pa{\qt{13}_{, 1}+\qt{23}_{, 2}-\qt{34}_{, 4}} - \frac{1}{24}S_{1}; \notag\\
\pa{\delta W^+}_{314} &= \frac{1}{4}\pa{\qq{13}_{, 1}+\qq{23}_{, 2}-\qq{34}_{, 4}} - \frac{1}{24}S_{2}; \notag
\end{align*}
\begin{align*}
\pa{\delta W^+}_{412} &= \frac{1}{4}\pa{\qd{14}_{, 1}-\qd{42}_{, 2}+\qd{34}_{, 3}} - \frac{1}{24}S_{3}; \notag\\
\pa{\delta W^+}_{413} &= \frac{1}{4}\pa{\qt{14}_{, 1}-\qt{42}_{, 2}+\qt{34}_{, 3}} + \frac{1}{24}S_{2}; \notag\\
\pa{\delta W^+}_{414} &= \frac{1}{4}\pa{\qq{14}_{, 1}-\qq{42}_{, 2}+\qq{34}_{, 3}} - \frac{1}{24}S_{1}.\notag
\end{align*}
\end{multicols}
\normalsize
\section{Riemann curvature of a twistor space} \label{appa}

We recall here all the components of the Riemann tensor, the Ricci tensor and the scalar curvature of the twistor space for a Riemannian four-manifold (see also \cite{jenrig}).
\\ \\
\textbf{Components of the Riemann curvature tensor} $\overline{\operatorname{Riem}}$ on $(Z,g_t)$:
\footnotesize
\noindent
\begin{multicols}{2}
\noindent
\begin{align} \label{riemtwist}
\overline{R}_{abcd} &= R_{abcd} -\frac{1}{4}t^2\left[\pa{\qt{ac}\qt{bd}-\qt{ad}\qt{bc}}\right.\\ \nonumber
&\left.+\pa{\qq{ac}\qq{bd}-\qq{ad}\qq{bc}}\right] \\ \nonumber &-\frac{1}{2}t^2\pa{\qt{ab}\qt{cd}+\qq{ab}\qq{cd}};\\ \nonumber
\overline{R}_{ab56} &= \qd{ab} -\frac{1}{4}t^2\sum_{c=1}^4\pa{\qt{ac}\qq{bc}-\qt{bc}\qq{ac}}; \nonumber\\
\overline{R}_{abc5} &= -\frac{1}{2}t \pa{\qt{ab}}_{c}; \qquad \overline{R}_{abc6} = -\frac{1}{2}t \pa{\qq{ab}}_{c};\nonumber
\end{align}
\begin{align*}
\overline{R}_{5656} &= \frac{1}{t^2}; \quad \overline{R}_{565b}=\overline{R}_{56a6}=0; \nonumber\\
\overline{R}_{5ab5} &= -\frac{1}{4}t^2\sum_{c=1}^4\qt{ac}\qt{bc}; \nonumber\\
\overline{R}_{5ab6} &= -\frac{1}{2}\qd{ab}-\frac{1}{4}t^2\sum_{c=1}^4\qt{bc}\qq{ac};\nonumber\\
\overline{R}_{6ab6} &= -\frac{1}{4}t^2\sum_{c=1}^4\qq{ac}\qq{bc}; \nonumber
\end{align*}
\begin{align*}
\overline{R}_{6ab5} &= \frac{1}{2}\qd{ab}-\frac{1}{4}t^2\sum_{c=1}^4\qq{bc}\qt{ac}.
\end{align*}
\end{multicols}
\normalsize
\textbf{Components of the Ricci tensor and}  $\overline{\ricc}$ \textbf{and
	of the scalar curvature} $\ol{S}$:
\footnotesize
\noindent
\begin{multicols}{2}
\noindent
\begin{align} \label{ricctwist}
\overline{R}_{ab} &= R_{ab} -\frac{1}{2}t^2\sum_{c=1}^4\pa{\qt{ac}\qt{bc}+\qq{ac}\qq{bc}}; \\ \overline{R}_{a5} &= \frac{1}{2}t\sum_{c=1}^4\pa{\qt{ac}}_c; \notag\\ \overline{R}_{a6} &= \frac{1}{2}t\sum_{c=1}^4\pa{\qq{ac}}_c; \notag
\end{align}
\begin{align*}
\overline{R}_{55} &= \frac{1}{t^2} + \frac{t^2}{4}\abs{\qt{ab}}^2; \notag \\ \overline{R}_{56} &= \frac{1}{4}t^2\sum_{a,c=1}^4 \qt{ac}\qq{ac}; \notag \\ \overline{R}_{66} &= \frac{1}{t^2} + \frac{t^2}{4}\abs{\qq{ab}}^2, \notag
\end{align*}
\end{multicols}
\begin{multicols}{1}
\begin{align} \label{scaltwist}
\overline{S} &= S + \frac{2}{t^2}- \frac{1}{4}t^2\pa{\abs{\qt{ab}}^2+\abs{\qq{ab}}^2}
\end{align}
\end{multicols}
\normalsize
(here: $\abs{\qt{ab}}^2 = \sum_{a, b=1}^4\pa{\qt{ab}}^2$, and similarly for $\qq{ab}$).

\vspace{0.5cm}

\textbf{Components of the Weyl tensor} $\overline{\operatorname{W}}$:
\footnotesize
\begin{align} \label{weyltwist}
\overline{W}_{abcd}&=R_{abcd}-\dfrac{1}{4}t^2[(\qt{ac}\qt{bd}-\qt{ad}\qt{bc})+
(\qq{ac}\qq{bd}-\qq{ad}\qq{bc})]
-\dfrac{1}{2}t^2(\qt{ab}\qt{cd}+\qq{ab}\qq{cd})\\+
&-\dfrac{1}{4}\Bigg\{
\left[R_{ac}-\dfrac{1}{2}t^2(\qt{ae}\qt{ce}+\qq{ae}\qq{ce})\right]\delta_{bd}
-\left[R_{bc}-\dfrac{1}{2}t^2(\qt{be}\qt{ce}+\qq{be}\qq{ce})\right]\delta_{ad}+\notag\\
&+\left[R_{bd}-\dfrac{1}{2}t^2(\qt{be}\qt{de}+\qq{be}\qq{de})\right]\delta_{ac}-\left[R_{ad}-\dfrac{1}{2}t^2(\qt{ae}\qt{de}+\qq{ae}\qq{de})\right]\delta_{bc}\Bigg\}+
\dfrac{\overline{S}}{20}(\delta_{ac}\delta_{bd}-\delta_{ad}\delta_{bc}); \notag
\end{align}
	\begin{align*}
		\overline{W}_{5ab5}&=\dfrac{1}{4}R_{ab}-\dfrac{3}{8}t^2\qt{ac}\qt{bc}-\dfrac{1}{8}t^2\qq{ac}\qq{bc}+
		\left(\dfrac{3}{20t^2}+\dfrac{3}{40}t^2|\qt{cd}|^2+\dfrac{1}{80}t^2|\qq{cd}|^2-\dfrac{S}{20}\right)\delta_{ab}; \notag\\
		\overline{W}_{6ab6}&=\dfrac{1}{4}R_{ab}-\dfrac{3}{8}t^2\qq{ac}\qq{bc}-\dfrac{1}{8}t^2\qt{ac}\qt{bc}+
		\left(\dfrac{3}{20t^2}+\dfrac{3}{40}t^2|\qq{cd}|^2+\dfrac{1}{80}t^2|\qt{cd}|^2-\dfrac{S}{20}\right)\delta_{ab}; \notag\\
\overline{W}_{abc5}&=-\dfrac{1}{2}t\left(\qt{ab}_{,c}+\dfrac{1}{4}\qt{bd}_{,d}\delta_{ac}-
\dfrac{1}{4}\qt{ad}_{,d}\delta_{bc}\right); \qquad
\overline{W}_{abc6}=-\dfrac{1}{2}t\left(\qq{ab}_{,c}+\dfrac{1}{4}\qq{bd}_{,d}\delta_{ac}-
\dfrac{1}{4}\qq{ad}_{,d}\delta_{bc}\right); \notag \\
\overline{W}_{ab56}&=\overline{R}_{ab56}=\qd{ab}-\dfrac{1}{4}t^2(\qt{ac}\qq{bc}-\qt{bc}\qq{ac});
\notag \qquad \quad
\overline{W}_{5ab6}=-\dfrac{1}{2}\qd{ab}-\dfrac{1}{4}t^2\qq{ac}\qt{bc}+
\dfrac{1}{16}t^2\sum_{c,d=1}^4\qq{cd}\qt{cd}\delta_{ab}; \notag\\
\overline{W}_{56a5}&=\dfrac{1}{8}\qq{ab}_{,b}; \quad
\overline{W}_{56a6}=-\dfrac{1}{8}\qt{ab}_{,b}; \qquad \qquad \qquad \qquad
\overline{W}_{5656}=\dfrac{3}{5t^2}+\dfrac{S}{20}-\dfrac{3}{40}t^2(|\qt{ab}|^2+|\qq{ab}|^2). \notag
\end{align*}
\normalsize
Let $(Z,g_t,J)$ be a K\"{a}hler-Einstein manifold. By \eqref{riemtwist},
the components of the Riemann tensor $\ol{\operatorname{Riem}}$ of $Z$ are
\small
\noindent
\begin{multicols}{2}
\noindent
\begin{align} \label{riemkahl}
\ol{R}_{5656}&=\dfrac{1}{t^2}, \qquad \ol{R}_{56a6}=\ol{R}_{565b}=0;\\
\ol{R}_{5ab5}&=\ol{R}_{6ab6}=-\dfrac{1}{4t^2}\del{ab}; \notag\\
\ol{R}_{5126}&=\ol{R}_{5346}=-\ol{R}_{5216}=-\ol{R}_{5436}=-\dfrac{1}{4t^2};
\notag\\
\ol{R}_{5ab6}&=0 \quad \mbox{ for } (a,b)\neq (1,2),(3,4); \notag\\
\ol{R}_{abc5}&=\ol{R}_{abc6}=0; \notag\\
\ol{R}_{1256}&=\ol{R}_{3456}=\dfrac{1}{2t^2}, \notag\\
\ol{R}_{ab56}&=0 \quad
\mbox{ for } (a,b)\neq (1,2),(3,4); \notag
\end{align}
\begin{align*}
\ol{R}_{1212}&=R_{1212}, \quad \ol{R}_{3434}=R_{3434};\notag\\
\ol{R}_{1313}&=R_{1313}-\dfrac{3}{4t^2}, \quad
\ol{R}_{4242}=R_{4242}-\dfrac{3}{4t^2};\notag\\
\ol{R}_{1414}&=R_{1414}-\dfrac{3}{4t^2}, \quad
\ol{R}_{2323}=R_{2323}-\dfrac{3}{4t^2};\notag\\
\ol{R}_{1234}&=R_{1234}+\dfrac{1}{2t^2};\notag\\
\ol{R}_{1342}&=R_{1342}-\dfrac{1}{4t^2};\notag\\
\ol{R}_{1423}&=R_{1423}-\dfrac{1}{4t^2};\notag\\
\ol{R}_{abcd}&=R_{abcd} \quad \mbox{ for the other components} . \notag
\end{align*}
\end{multicols}
\normalsize
The components of $\nabla\ol{\operatorname{Riem}}$ are
\vspace{0cm}
\small
\begin{align} \label{covriemkahl}
\ol{R}_{5656,t}&=\ol{R}_{565b,t}=\ol{R}_{56a6,t}=
\ol{R}_{5ab5,t}=\ol{R}_{6ab6,t}=\ol{R}_{5ab6,t}=\ol{R}_{ab56,t}=0; \\
\ol{R}_{abcd,e}&=R_{abcd,e}, \quad \ol{R}_{abcd,5}
=\ol{R}_{abcd,6}=
\ol{R}_{abc5,5}=\ol{R}_{abc5,6}=\ol{R}_{abc6,5}=\ol{R}_{abc6,6}=0;\notag
\end{align}
\begin{align*}
\begin{cases}
	2t\ol{R}_{12c5,1}=-R_{12c3};\\
	2t\ol{R}_{12c5,2}=R_{12c4}; \\
	2t\ol{R}_{12c5,3}=\frac{1}{t^2}\del{2c}-R_{121c}; \\
	2t\ol{R}_{12c5,4}=\frac{1}{t^2}\del{1c}-R_{12c2};
\end{cases}
\begin{cases}
	2t\ol{R}_{13c5,1}=\frac{1}{t^2}\del{1c}-R_{13c3}; \\
	2t\ol{R}_{13c5,2}=R_{13c4}; \\
	2t\ol{R}_{13c5,3}=\frac{1}{t^2}\del{3c}-R_{131c}; \\
	2t\ol{R}_{13c5,4}=-R_{13c2};
\end{cases}
\begin{cases}
	2t\ol{R}_{14c5,1}=-R_{14c3}; \\
	2t\ol{R}_{14c5,2}=R_{14c4}-\frac{1}{t^2}\del{1c}; \\
	2t\ol{R}_{14c5,3}=\frac{1}{t^2}\del{4c}-R_{141c}; \\
	2t\ol{R}_{14c5,4}=-R_{14c2};
\end{cases}
\end{align*}
\begin{align*}
\begin{cases}
	2t\ol{R}_{23c5,1}=\frac{1}{t^2}\del{2c}-R_{23c3}; \\
	2t\ol{R}_{23c5,2}=R_{23c4}; \\
	2t\ol{R}_{23c5,3}=R_{23c1}; \\
	2t\ol{R}_{23c5,4}=R_{232c}-\frac{1}{t^2}\del{3c};
\end{cases}
\begin{cases}
	2t\ol{R}_{24c5,1}=R_{42c3};\\
	2t\ol{R}_{24c5,2}=R_{424c}-\frac{1}{t^2}\del{2c};\\
	2t\ol{R}_{24c5,3}=-R_{42c1};\\
	2t\ol{R}_{24c5,4}=R_{42c2}-\frac{1}{t^2}\del{4c};
\end{cases}
\begin{cases}
	2t\ol{R}_{34c5,1}=R_{343c}-\frac{1}{t^2}\del{4c};\\
	2t\ol{R}_{34c5,2}=R_{34c4}-\frac{1}{t^2}\del{3c};\\
	2t\ol{R}_{34c5,3}=R_{34c1};\\
	2t\ol{R}_{34c5,4}=-R_{34c2}
\end{cases}
\end{align*}
\begin{align*}
\begin{cases}
	2t\ol{R}_{12c6,1}=-R_{12c4};\\
	2t\ol{R}_{12c6,2}=-R_{12c3}; \\
	2t\ol{R}_{12c6,3}=R_{12c2}-\frac{1}{t^2}\del{1c}; \\
	2t\ol{R}_{12c6,4}=\frac{1}{t^2}\del{2c}-R_{121c};
\end{cases}
\begin{cases}
	2t\ol{R}_{13c6,1}=-R_{13c4}; \\
	2t\ol{R}_{13c6,2}=\frac{1}{t^2}\del{1c}-R_{13c3}; \\
	2t\ol{R}_{13c6,3}=R_{13c2}; \\
	2t\ol{R}_{13c6,4}=\frac{1}{t^2}\del{3c}-R_{131c};
\end{cases}
\begin{cases}
	2t\ol{R}_{14c6,1}=\frac{1}{t^2}\del{1c}-R_{14c4}; \\
	2t\ol{R}_{14c6,2}=-R_{14c3}; \\
	2t\ol{R}_{14c6,3}=R_{14c2}; \\
	2t\ol{R}_{14c6,4}=\frac{1}{t^2}\del{4c}-R_{141c};
\end{cases}
\end{align*}
\begin{align*}
\begin{cases}
	2t\ol{R}_{23c6,1}=-R_{23c4}; \\
	2t\ol{R}_{23c6,2}=\frac{1}{t^2}\del{2c}-R_{23c3}; \\
	2t\ol{R}_{23c6,3}=\frac{1}{t^2}\del{3c}-R_{232c}; \\
	2t\ol{R}_{23c6,4}=-R_{231c};
\end{cases}
\begin{cases}
	2t\ol{R}_{24c6,1}=\frac{1}{t^2}\del{2c}-R_{424c};\\
	2t\ol{R}_{24c6,2}=R_{42c3};\\
	2t\ol{R}_{24c6,3}=\frac{1}{t^2}\del{4c}-R_{42c2};\\
	2t\ol{R}_{24c6,4}=R_{421c};
\end{cases}
\begin{cases}
	2t\ol{R}_{34c6,1}=\frac{1}{t^2}\del{3c}-R_{34c4};\\
	2t\ol{R}_{34c6,2}=R_{343c}-\frac{1}{t^2}\del{4c};\\
	2t\ol{R}_{34c6,3}=R_{34c2};\\
	2t\ol{R}_{34c6,4}=-R_{341c}
\end{cases}
\end{align*}
\normalsize
The general expressions of the components of $\nabla\ol{\ricc}$ are
\small
\begin{multicols}{2}
\begin{align} \label{nablaricc}
\ol{R}_{ab,c}&=R_{ab,c}-\dfrac{t^2}{2}[
\qt{ad}_{,c}\qt{bd}+\qt{bd}_{,c}\qt{ad}+\qq{ad}_{,c}\qq{bd}+
\qq{bd}_{,c}\qq{ad}]\\
&-\dfrac{t^2}{4}[\qt{ad}_{,d}\qt{bc}+\qt{bd}_{,d}\qt{ac}+
\qq{ad}_{,d}\qq{bc}+\qq{bd}_{,d}\qq{ac}];\notag\\
\ol{R}_{ab,5}&=\dfrac{t}{2}(\qd{ac}\qq{bc}+\qd{bc}\qq{ac})-
\dfrac{t}{2}(R_{ac}\qt{bc}+R_{bc}\qt{ac})+\notag\\
&+\dfrac{t^3}{4}[\qt{ac}(\qt{cd}\qt{bd}+\qq{cd}\qq{bd})+
\qt{bc}(\qt{cd}\qt{ad}+\qq{cd}\qq{ad})];\notag\\
\ol{R}_{ab,6}&=\dfrac{t}{2}(\qd{ac}\qt{bc}+\qd{bc}\qt{ac})-
\dfrac{t}{2}(R_{ac}\qq{bc}+R_{bc}\qq{ac})+\notag\\
&+\dfrac{t^3}{4}[\qq{ac}(\qt{cd}\qt{bd}+\qq{cd}\qq{bd})+
\qq{bc}(\qt{cd}\qt{ad}+\qq{cd}\qq{ad})];\notag\\
\ol{R}_{a5,b}&=\dfrac{t}{2}\qt{ad}_{,db}-
\dfrac{t}{2}\sq{\qt{ab}\pa{\dfrac{t^2}{4}\abs{\qt{cd}}^2+
		\dfrac{1}{t^2}}+\dfrac{t^2}{4}\qq{ab}\qt{dc}\qq{dc}}+\notag\\
&+\dfrac{t}{2}\qt{db}\sq{R_{ad}-\dfrac{t^2}{2}(\qt{ac}\qt{dc}+
	\qq{ac}\qq{dc})};\notag\\
\ol{R}_{a6,b}&=\dfrac{t}{2}\qq{ad}_{,db}-
\dfrac{t}{2}\sq{\qq{ab}\pa{\dfrac{t^2}{4}\abs{\qq{cd}}^2+
		\dfrac{1}{t^2}}+\dfrac{t^2}{4}\qt{ab}\qt{dc}\qq{dc}}+\notag\\
&+\dfrac{t}{2}\qq{db}\sq{R_{ad}-\dfrac{t^2}{2}(\qt{ac}\qt{dc}+
	\qq{ac}\qq{dc})};\notag
\end{align}
\begin{align*}
	\ol{R}_{a5,5}&=-\dfrac{t^2}{4}\qt{cd}_{,d}\qt{ac};\\
	\ol{R}_{a5,6}&=\dfrac{1}{2}\qd{ac}_{,c}-\dfrac{t^2}{4}
	\qt{cd}_{,d}\qq{ac};\notag\\
\ol{R}_{a6,5}&=-\dfrac{1}{2}\qd{ac}_{,c}
-\dfrac{t^2}{4}\qq{cd}_{,d}\qt{ac}; \\
\ol{R}_{a6,6}&=-\dfrac{t^2}{4}
\qq{cd}_{,d}\qq{ac};\notag\\
\ol{R}_{55,a}&=\dfrac{t^2}{2}(\qt{bc}\qt{bc}_{,a}+\qt{bc}_{,c}\qt{ba});\\
\ol{R}_{55,5}&=0, \qquad \ol{R}_{55,6}=\dfrac{t}{2}\qd{dc}\qt{dc};
\notag\\
\ol{R}_{66,a}&=\dfrac{t^2}{2}(\qq{bc}\qq{bc}_{,a}+\qq{bc}_{,c}\qq{ba});
\\
\ol{R}_{66,5}&=-\dfrac{t}{2}\qd{dc}\qq{dc}, \qquad \ol{R}_{66,6}=0;
\notag\\
\ol{R}_{56,a}&=\dfrac{t^2}{4}\left[\qt{bc}\qq{bc}_{,a}+\qq{bc}\qt{bc}_{,a}+
\right. \\
&+\left.
\qt{ba}\qq{bc}_{,c}+\qq{ba}\qt{bc}_{,c}\right];\notag\\
\ol{R}_{56,5}&=-\dfrac{t}{4}\qd{ab}\qt{ab}, \qquad
\ol{R}_{56,6}=\dfrac{t}{4}\qd{ab}\qq{ab}. \notag
\end{align*}
\end{multicols}
\normalsize
The covariant derivative of the scalar curvature $\ol{S}$ has components
\begin{align} \label{nablascal}
\ol{S}_a&=S_a-\dfrac{t^2}{2}[\qt{bc}\qt{bc}_{,a}+\qq{bc}\qq{bc}_{,a}];
\\
\ol{S}_5&=\dfrac{t}{2}\qd{bc}\qq{bc}, \qquad
\ol{S}_6=-\dfrac{t}{2}\qd{bc}\qt{bc}. \notag
\end{align}

\section{Covariant derivative of the almost complex structures and differential of the K\"ahler forms}\label{appb}

In this appendix we list all the components of the covariant derivative of the almost complex structures $J^{\pm}$ on $Z$.  Recall that

\begin{align}\label{eq:defJpmsixdim}
J^{\pm} &= \sum_{k=1}^3\pa{\theta^{2k-1}\otimes e_{2k}-\theta^{2k}\otimes e_{2k-1}}\\ \nonumber &=\theta^1\otimes e_2 - \theta^2\otimes e_1 + \theta^3\otimes e_4 - \theta^4\otimes e_3 \pm \theta^5\otimes e_6 \mp \theta^6\otimes e_5;
\end{align}
using the same notation of the article, we write $J^+=J$, $J^-=\mathbf{J}$.

\

\textbf{Computation of} $\nabla J$ \textbf{on} $(Z,g_t,J)$:

The covariant derivative $\nabla J$ of an almost complex
structure is defined as
\begin{equation} \label{nablajcomp}
	\nabla J=J_{q,t}^p\theta^t\otimes\theta^q\otimes e_p, \mbox{ where }
	J_{q,t}^p=dJ_q^p-J_r^p\omega_q^r+J_q^r\omega_r^p, \mbox { }
	J_{p,t}^q=-J_{q,t}^p,
\end{equation}
with respect to a local orthonormal frame $\{\theta_p\}$ and its dual
frame $\{e_q\}$.
Using \eqref{nablajcomp}, a long but straightforward computation shows that:

\noindent
\begin{multicols}{2}
\noindent
\begin{align} \label{nablajplus}
	J^1_{2, t} &= J^3_{4, t} = J^5_{6, t} =0; \\
	J^1_{3, a} &= 0; \notag\\
	J^1_{3, 5} &= -\frac{1}{2}t\pa{\qt{14}+\qt{23}}; \notag\\
	J^1_{3, 6} &= \frac{1}{2}t\sq{\frac{2}{t^2}-\pa{\qq{14}+\qq{23}}}; \notag\\
	J^1_{4, a} &=0; \notag\\
	J^1_{4, 5} &= -\frac{1}{2}t\sq{\frac{2}{t^2}-\pa{\qt{13}+\qt{42}}};
	\notag\\
	J^1_{4, 6} &= \frac{1}{2}t\pa{\qq{13}+\qq{42}}; \notag\\
	J^1_{5, a} &= -\frac{1}{2}t\pa{\qt{2a}+\qq{1a}}; \notag\\
	J^1_{5, 5} &= J^1_{5, 6}= 0; \notag\\
	J^1_{6, a} &= \frac{1}{2}t\pa{\qt{1a}-\qq{2a}}; \notag
\end{align}
\begin{align*}
	J^1_{6, 5} &= J^1_{6, 6}= 0; \notag\\
	J^2_{3, t} &=J^1_{4, t}; \notag\\
	J^2_{4, t} &= -J^1_{3, t} \notag\\
	J^2_{5, t} &= J^1_{6, t}\notag\\
	J^2_{6, t} &= -J^1_{5, t} \notag\\
	J^3_{5, a} &= -\frac{1}{2}t\pa{\qt{4a}+\qq{3a}}; \notag\\
	J^3_{5, 5} &= J^3_{5, 6}= 0; \notag\\
	J^3_{6, a} &= \frac{1}{2}t\pa{\qt{3a}-\qq{4a}}; \notag\\
	J^3_{6, 5} &= J^3_{6, 6}= 0; \notag\\
	J^4_{5, t} &= J^3_{6, t}\notag \\
	J^4_{6, t} &= -J^3_{5, t}.\notag
\end{align*}
\end{multicols}
The square norm $|\nabla J|^2=\sum_{p,q,t=1}^6J_{p,q}^tJ_{p,q}^t$ is given by
\begin{align} \label{squarejplus}
\dfrac{1}{t^2}|\nabla J|^2&=[(\qt{14}+\qt{23})^2+(\qq{14}+\qq{23})^2+
(\qt{13}+\qt{42})^2+(\qq{13}+\qq{42})^2+\\
&+(\qt{13}-\qq{14})^2+(\qt{42}-\qq{14})^2+(\qt{13}-\qq{23})^2+(\qt{14}+\qq{42})^2+\notag\\
&+(\qt{14}+\qq{13})^2+(\qq{23}-\qt{42})^2+(\qq{14}-\qt{13})^2+(\qt{23}+\qq{42})^2]+\notag\\
&+2[\qt{12}^2+\qq{12}^2+\qt{34}^2+\qq{34}^2]-\dfrac{4}{t^2}[(\qt{13}+\qt{42})+(\qq{14}+\qq{23})]+
\dfrac{8}{t^4}.\notag
\end{align}
\noindent
\textbf{Computation} \textbf{of} $\nabla \mathbf{J}$ \textbf{on} $(Z,g_t,J)$:

Again, using  \eqref{nablajcomp}, we have:
\noindent
\begin{multicols}{2}
	\noindent
	\begin{align} \label{nablajmin}
		\mathbf{J}^1_{2, t} &= \mathbf{J}^3_{4, t} = \mathbf{J}^5_{6, t} =0; \\ \mathbf{J}^1_{3, a} &= 0; \notag\\
		\mathbf{J}^1_{3, 5} &= -\frac{1}{2}t\pa{\qt{14}+\qt{23}}; \notag\\
		\mathbf{J}^1_{3, 6} &= \frac{1}{2}t\sq{\frac{2}{t^2}-\pa{\qq{14}+\qq{23}}}; \notag\\
		\mathbf{J}^1_{4, a} &=0; \notag\\ 
		\mathbf{J}^1_{4, 5} &= -\frac{1}{2}t\sq{\frac{2}{t^2}-\pa{\qt{13}+\qt{42}}}; \notag
		 \end{align}
	 \begin{align*}
		 \mathbf{J}^1_{4, 6} &= \frac{1}{2}t\pa{\qq{13}+\qq{42}}; \notag\\ \mathbf{J}^1_{5, a} &= -\frac{1}{2}t\pa{\qt{2a}-\qq{1a}};\notag\\
		\mathbf{J}^1_{5, 5} &= \mathbf{J}^1_{5, 6}= 0; \notag\\
		\mathbf{J}^1_{6, a} &= -\frac{1}{2}t\pa{\qt{1a}+\qq{2a}};\notag\\
		\mathbf{J}^1_{6, 5} &= \mathbf{J}^1_{6, 6}= 0; \notag
	\end{align*}
	\begin{align*}
		\mathbf{J}^2_{3, t} &=\mathbf{J}^1_{4, t}; \notag\\
		\mathbf{J}^2_{4, t} &= -\mathbf{J}^1_{3, t} \notag\\ 
		\mathbf{J}^2_{5, t} &= -\mathbf{J}^1_{6, t} \notag\\ 
		\mathbf{J}^2_{6, t} &= \mathbf{J}^1_{5, t} \notag\\ 
		\mathbf{J}^3_{5, a} &= -\frac{1}{2}t\pa{\qt{4a}-\qq{3a}}; \notag
	\end{align*}
	\begin{align*}
		\mathbf{J}^3_{5, 5} &= \mathbf{J}^3_{5, 6}= 0; \notag\\ \mathbf{J}^3_{6, a} &= -\frac{1}{2}t\pa{\qt{3a}+\qq{4a}}; \notag\\ \mathbf{J}^3_{6, 5} &= \mathbf{J}^3_{6, 6}= 0;\notag \\ \mathbf{J}^4_{5, t} &= -\mathbf{J}^3_{6, t} \notag\\ \mathbf{J}^4_{6, t} &= \mathbf{J}^3_{5, t}.\notag
	\end{align*}
\end{multicols}
\

\textbf{K\"{a}hler forms of} $J$ \textbf{and} $\JJ$:

denoting by $\omega_+$ and $\omega_-$ the K\"{a}hler forms of $J$ and $\JJ$, respectively, we have:

\begin{align} \label{kahtwistplus}
d\omega_+&=-t\qq{12}\theta^1\wedge\theta^2\wedge\theta^5+t\qt{12}\theta^1\wedge\theta^2\wedge\theta^6-
t\qq{13}\theta^1\wedge\theta^3\wedge\theta^5+\\
&+\left(t\qt{13}-\dfrac{1}{t}\right)\theta^1\wedge\theta^3\wedge\theta^6
+\left(\dfrac{1}{t}-t\qq{14}\right)\theta^1\wedge\theta^4\wedge\theta^5+ \notag\\
&+t\qt{14}\theta^1\wedge\theta^4\wedge\theta^6+\left(\dfrac{1}{t}-t\qq{23}\right)\theta^2\wedge\theta^3\wedge\theta^5
+t\qt{23}\theta^2\wedge\theta^3\wedge\theta^6+\notag\\
&-t\qq{42}\theta^4\wedge\theta^2\wedge\theta^5+\left(t\qt{42}-\dfrac{1}{t}\right)\theta^4\wedge\theta^2\wedge\theta^6
-t\qq{34}\theta^3\wedge\theta^4\wedge\theta^5+\notag\\
&+t\qt{34}\theta^3\wedge\theta^4\wedge\theta^6 \notag.
\end{align}

\begin{align} \label{kahtwistmin}
d\omega_-&=t\qq{12}\theta^1\wedge\theta^2\wedge\theta^5-t\qt{12}\theta^1\wedge\theta^2\wedge\theta^6+
t\qq{13}\theta^1\wedge\theta^3\wedge\theta^5+\\
&-\left(t\qt{13}+\dfrac{1}{t}\right)\theta^1\wedge\theta^3\wedge\theta^6
+\left(\dfrac{1}{t}+t\qq{14}\right)\theta^1\wedge\theta^4\wedge\theta^5+\notag\\
&-t\qt{14}\theta^1\wedge\theta^4\wedge\theta^6+\left(\dfrac{1}{t}+t\qq{23}\right)\theta^2\wedge\theta^3\wedge\theta^5
-t\qt{23}\theta^2\wedge\theta^3\wedge\theta^6+\notag\\
&+t\qq{42}\theta^4\wedge\theta^2\wedge\theta^5-\left(t\qt{42}+\dfrac{1}{t}\right)\theta^4\wedge\theta^2\wedge\theta^6
+t\qq{34}\theta^3\wedge\theta^4\wedge\theta^5+\notag\\
&-t\qt{34}\theta^3\wedge\theta^4\wedge\theta^6. \notag
\end{align}

As far as the codifferentials of $\omega_+$ and $\omega_-$ are concerned, we have:
\begin{equation} \label{codifftwist}
\delta\omega_+=\delta\omega_-=t(\qt{12}+\qt{34})\theta^5+t(\qq{12}+\qq{34})\theta^6.
\end{equation}

\section{Hessian and Laplacian of $J$ and $\textbf{J}$} \label{appendc}
By definition, we have
\begin{equation*}
\nabla^2J = J^p_{q, rt}\theta^t\otimes\theta^r\otimes\theta^q\otimes e_p,
\end{equation*}
and
\begin{equation*}
J^p_{q, rt}\theta^t = dJ^p_{q, r}-J^p_{t, r}\theta^t_q-J^p_{q, t}\theta^t_r+J^t_{q, r}\theta^p_t; \quad J^p_{q, rt}= - J^q_{p, rt}.
\end{equation*}
\footnotesize
\begin{align*}
J^1_{2, rt}\theta^t &= -\frac{1}{2}t\sq{J^1_{5, r}\pa{\qt{2a}+\qq{1a}}+J^1_{6, r}\pa{\qq{2a}-\qt{1a}}}\theta^a \\
&-\frac{1}{2}t\set{J^1_{3, r}\pa{\qt{14}+\qt{23}} + J^1_{4, r}\sq{\frac{2}{t^2}-\pa{\qt{13}+\qt{42}}}}\theta^5 \\ &+\frac{1}{2}t\set{J^1_{3, r}\sq{\frac{2}{t^2}-\pa{\qq{14}+\qq{23}}} + J^1_{4, r}\pa{\qq{13}+\qq{42}}}\theta^6;\\
J^3_{4, rt}\theta^t &= \frac{1}{2}t\sq{J^3_{6, r}\pa{\qt{3a}-\qq{4a}}-J^3_{5, r}\pa{\qt{4a}+\qq{3a}}}\theta^a \\
&-\frac{1}{2}t\set{J^1_{3, r}\pa{\qt{14}+\qt{23}} + J^1_{4, r}\sq{\frac{2}{t^2}-\pa{\qt{13}+\qt{42}}}}\theta^5 \\ &+\frac{1}{2}t\set{J^1_{3, r}\sq{\frac{2}{t^2}-\pa{\qq{14}+\qq{23}}} + J^1_{4, r}\pa{\qq{13}+\qq{42}}}\theta^6; \\
J^5_{6, rt}\theta^t &= \frac{1}{2}t\sq{J^1_{6, r}\pa{\qt{1a}-\qq{2a}}+J^2_{6, r}\pa{\qt{2a}+\qq{1a}}+J^3_{6, r}\pa{\qt{3a}-\qq{4a}}+J^4_{6, r}\pa{\qt{4a}+\qq{3a}}}\theta^a;\\
J^1_{3, at}\theta^t &= \frac{1}{4}t^2\set{\pa{\qt{3a}-\qq{4a}}\qq{1b}-\pa{\qt{4a}+\qq{3a}}\qt{1b}+\pa{\qt{2a}+\qq{1a}}\qt{3b}}\theta^b \\ & +\frac{1}{4}t^2\set{-\pa{\qt{1a}-\qq{2a}}\qq{3b}+\pa{\qt{14}+\qt{23}}\qt{ab}-\sq{\frac{2}{t^2}-\pa{\qq{14}+\qq{23}}}\qq{ab}}\theta^b;\\
J^1_{3, 5t}\theta^t &= -\frac{1}{2}t\pa{\qt{14}+\qt{23}}_{a}\theta^a +\frac{1}{4}t^2\pa{\qt{12}+\qt{34}}\sq{\frac{4}{t^2}-\pa{\qt{q3}+\qt{42}}}\theta^5 \\ &-\frac{1}{4}t^2\sq{\pa{\qq{12}+\qq{34}}\pa{\qt{13}+\qt{42}}}\theta^6; \\
J^1_{3, 6t}\theta^t &= -\frac{1}{2}t\pa{\qq{14}+\qq{23}}_{a}\theta^a +\frac{1}{4}t^2\sq{\frac{4}{t^2}\pa{\qq{12}+\qq{34}}-\pa{\qq{13}+\qq{42}}\pa{\qt{12}+\qt{34}}}\theta^5 \\&-\frac{1}{4}t^2\pa{\qq{13}+\qq{42}}\pa{\qq{12}+\qq{34}}\theta^6;\\
J^1_{4, at}\theta^t &= \frac{1}{4}t^2\set{\pa{\qt{3a}-\qq{4a}}\qt{1b}+\pa{\qt{4a}+\qq{3a}}\qq{1b}+\pa{\qt{2a}+\qq{1a}}\qt{4b}}\theta^b \\ &= +\frac{1}{4}t^2\set{-\pa{\qt{1a}-\qq{2a}}\qq{4b}-\pa{\qq{13}+\qq{42}}\qq{ab}+\sq{\frac{2}{t^2}-\pa{\qt{13}+\qt{42}}}\qt{ab}}\theta^b;\\
J^1_{4, 5t}\theta^t &= \frac{1}{2}t\pa{\qt{13}+\qt{42}}_{a}\theta^a -\frac{1}{4}t^2\pa{\qt{12}+\qt{34}}\pa{\qt{14}+\qt{23}}\theta^5 \\ &+\frac{1}{4}t^2\sq{\frac{4}{t^2}\pa{\qt{12}+\qt{34}}-\pa{\qt{14}+\qt{23}}\pa{\qq{12}+\qq{34}}}\theta^6;\\
J^1_{4, 6t}\theta^t &= \frac{1}{2}t\pa{\qq{13}+\qq{42}}_{a}\theta^a -\frac{1}{4}t^2\pa{\qt{12}+\qt{34}}\pa{\qq{14}+\qq{23}}\theta^5 \\&+\frac{1}{4}t^2\pa{\qq{12}+\qq{34}}\sq{\frac{4}{t^2}-\pa{\qq{14}+\qq{23}}}\theta^6;\\
J^1_{5, at}\theta^t &= -\frac{1}{2}t\pa{\qt{2a}+\qq{1a}}_b\theta^b + \frac{1}{2}\pa{\qd{1a}-\qt{4a}}\theta^5 + \frac{1}{2}\pa{\qt{3a}-\qd{2a}}\theta^6 \\ &+\frac{1}{4}t^2\sq{\pa{\qt{2c}+\qq{1c}}\qt{ac}-\pa{\qt{1a}-\qq{2a}}\qt{12}+\pa{\qt{4a}+\qq{3a}}\qt{13}-\pa{\qt{3a}-\qq{4a}}\qt{14}}\theta^5 \\
&+\frac{1}{4}t^2\sq{\pa{\qt{2c}+\qq{1c}}\qt{ac}-\pa{\qt{1a}-\qq{2a}}\qq{12}+\pa{\qt{4a}+\qq{3a}}\qq{13}-\pa{\qt{3a}-\qq{4a}}\qq{14}}\theta^6; \\
J^1_{5, 5t}\theta^t &= \frac{1}{2}t\pa{J^1_{c, 5}+J^1_{5, c}}\qt{cb}\theta^b; \qquad \qquad
J^1_{5, 6t}\theta^t = \frac{1}{2}t\pa{J^1_{c, 6}\qt{cb}+J^1_{5, c}\qq{cb}}\theta^b;
\end{align*}
\begin{align*}
J^1_{6, at}\theta^t &= \frac{1}{2}t\pa{\qt{1a}-\qq{2a}}_b\theta^b + \frac{1}{2}\pa{\qd{2a}-\qq{4a}}\theta^5 + \frac{1}{2}\pa{\qd{1a}+\qq{3a}}\theta^6 \\ &-\frac{1}{4}t^2\sq{\pa{\qt{1c}-\qq{2c}}\qt{ac}+\pa{\qt{2a}+\qq{1a}}\qt{12}+\pa{\qt{3a}-\qq{4a}}\qt{13}+\pa{\qt{4a}+\qq{3a}}\qt{14}}\theta^5 \\
&-\frac{1}{4}t^2\sq{\pa{\qt{1c}-\qq{2c}}\qq{ac}+\pa{\qt{3a}+\qq{1a}}\qq{12}+\pa{\qt{3a}-\qq{4a}}\qq{13}+\pa{\qt{4a}+\qq{3a}}\qq{14}}\theta^6; \\
J^1_{6, 5t}\theta^t &= \frac{1}{2}t\pa{J^1_{6, c}\qt{cb}+J^1_{c, 5}\qq{cb}}\theta^b; \\
J^1_{6, 6t}\theta^t &= \frac{1}{2}t\pa{J^1_{c, 6}+J^1_{6, c}}\qq{cb}\theta^b;\\
J^3_{5, at}\theta^t &= -\frac{1}{2}t\pa{\qt{4a}+\qq{3a}}_b\theta^b + \frac{1}{2}\pa{\qd{3a}+\qt{2a}}\theta^5 - \frac{1}{2}\pa{\qd{4a}+\qt{1a}}\theta^6 \\ &+\frac{1}{4}t^2\sq{\pa{\qt{4c}+\qq{3c}}\qt{ac}-\pa{\qt{2a}+\qq{1a}}\qt{13}+\pa{\qt{1a}-\qq{2a}}\qt{23}-\pa{\qt{3a}-\qq{4a}}\qt{34}}\theta^5 \\
&+\frac{1}{4}t^2\sq{\pa{\qt{4c}+\qq{3c}}\qq{ac}-\pa{\qt{2a}+\qq{1a}}\qq{13}+\pa{\qt{1a}-\qq{2a}}\qq{23}-\pa{\qt{3a}-\qq{4a}}\qq{34}}\theta^6; \\
J^3_{5, 5t}\theta^t &= \frac{1}{2}t\pa{J^3_{c, 5}+J^3_{5, c}}\qt{cb}\theta^b; \\
J^3_{5, 6t}\theta^t &= \frac{1}{2}t\pa{J^3_{c, 6}\qt{cb}+J^3_{5, c}\qq{cb}}\theta^b;\\
J^3_{6, at}\theta^t &= \frac{1}{2}t\pa{\qt{3a}-\qq{4a}}_b\theta^b + \frac{1}{2}\pa{\qd{4a}+\qq{2a}}\theta^5 + \frac{1}{2}\pa{\qd{3a}-\qq{1a}}\theta^6 \\ &-\frac{1}{4}t^2\sq{\pa{\qt{3c}-\qq{4c}}\qt{ac}-\pa{\qt{1a}-\qq{2a}}\qt{13}-\pa{\qt{2a}+\qq{1a}}\qt{23}+\pa{\qt{4a}+\qq{3a}}\qt{34}}\theta^5 \\
&-\frac{1}{4}t^2\sq{\pa{\qt{3c}-\qq{4c}}\qq{ac}-\pa{\qt{1a}-\qq{2a}}\qq{13}-\pa{\qt{2a}+\qq{1a}}\qq{23}+\pa{\qt{4a}+\qq{3a}}\qq{34}}\theta^6; \\
J^3_{6, 5t}\theta^t &= \frac{1}{2}t\pa{J^3_{6, c}\qt{cb}+J^3_{c, 5}\qq{cb}}\theta^b; \\
J^3_{6, 6t}\theta^t &= \frac{1}{2}t\pa{J^3_{c, 6}+J^3_{6, c}}\qq{cb}\theta^b;\\
J^4_{5, rt} &= J^3_{6, rt}; \qquad
J^4_{6, rt} = -J^3_{5, rt};\qquad
J^2_{3, rt} = J^1_{4, rt}; \qquad
J^2_{4, rt} = -J^1_{3, rt};\qquad
J^2_{5, rt} = J^1_{6, rt};\qquad
J^2_{6, rt} = -J^1_{5, rt}.
\end{align*}
\normalsize
The local components of the Laplacian $\Delta_JJ$ are
\begin{equation} \label{lapltwist}
\Delta_JJ^u_v := \pa{\Delta J-J\nabla_pJ\nabla_pJ}^u_v = \Delta J^u_v - J^u_tJ^t_{q, p}J^q_{v, p}.
\end{equation}
Explicitly, we obtain
\footnotesize
\begin{multicols}{2}
\begin{align*}
\Delta_JJ^1_2 &= \Delta_JJ^3_4 = \Delta_JJ^5_6 \equiv 0; \\ \Delta_JJ^1_3 &= \pa{\qt{12}+\qt{34}} + \frac{1}{4}t\sq{N^5_{14}\pa{\qq{12}+\qq{34}}-N^5_{13}\pa{\qt{12}+\qt{34}}} = - \Delta_JJ^2_4 \\ \Delta_JJ^1_4 &= \pa{\qq{12}+\qq{34}} + \frac{1}{4}t\sq{N^5_{14}\pa{\qt{12}+\qt{34}}-N^5_{13}\pa{\qq{12}+\qq{34}}} =  \Delta_JJ^2_3\\ \Delta_JJ^1_5 &= -\frac{1}{2}t\sum_{a=1}^4\pa{\qt{2a}+\qq{1a}}_a = -\Delta_JJ^2_6;
\end{align*}
\begin{align*}
 \Delta_JJ^1_6 &= \frac{1}{2}t\sum_{a=1}^4\pa{\qt{1a}-\qq{2a}}_a = \Delta_JJ^2_5;\\ \Delta_JJ^3_5 &= -\frac{1}{2}t\sum_{a=1}^4\pa{\qt{4a}+\qq{3a}}_a = -\Delta_JJ^4_6;\\ \Delta_JJ^3_6 &= \frac{1}{2}t\sum_{a=1}^4\pa{\qt{3a}-\qq{4a}}_a = \Delta_JJ^4_5.
\end{align*}
\end{multicols}
\normalsize
Note that
\footnotesize
\begin{align} \label{laplweyldiv}
\Delta_JJ^1_5 &= 2t\sq{\pa{\delta W^+}_{213}-\pa{\delta W^+}_{141}};\\
\Delta_JJ^1_6 &= 2t\sq{\pa{\delta W^+}_{214}+\pa{\delta W^+}_{131}};\notag\\
\Delta_JJ^3_5 &= 2t\sq{\pa{\delta W^+}_{413}+\pa{\delta W^+}_{314}};\notag\\
\Delta_JJ^3_6 &= 2t\sq{\pa{\delta W^+}_{414}-\pa{\delta W^+}_{313}}.\notag
\end{align}
\normalsize

\section{Computation of $N_J$, $N_{\textbf{J}}$} \label{appc}

Here we consider the Nijenhuis tensors $N_J$ and $N_{\JJ}$ of
$J=J^+$ and $\JJ=J^-$, respectively.
We have
\begin{align*}
N_J &= N^p_{tq}\theta^t\otimes\theta^q\otimes e_p, \quad N^p_{tq}=-N^p_{qt},\\
N_{\JJ} &= \mathbf{N}^p_{tq}\theta^t\otimes\theta^q\otimes e_p, \quad \mathbf{N}^p_{tq}=-\mathbf{N}^p_{qt},
\end{align*}
where $N^p_{tq}=J_t^rJ_{r,q}^p-J_q^rJ_{r,t}^p+J_q^sJ_{t,s}^p-J_t^sJ_{q,s}^p$
and analogously for $\mathbf{N}^p_{tq}$.

Using the definition, \eqref{nablajplus} and \eqref{nablajmin},
we have
\footnotesize
\noindent
\begin{multicols}{2}
\noindent
\begin{align} \label{nijcompplus}
N^a_{pq} &=0=N^5_{12}=N^5_{34}=N^6_{12}=N^6_{34}; \\
N^5_{13} &= -t({\qt{13}+\qt{42}-\qq{14}-\qq{23}}) = 2t({A_{33}-A_{22}}); \notag\\
N^5_{14} &=  -t({\qt{14}+\qt{23}+\qq{13}+\qq{42}}) \notag \\
&= -2t({\qt{14}+\qt{23}}) = -2t({\qq{13}+\qq{42}}) = -4tA_{23}; \notag
\end{align}
\begin{align*}
N^5_{13} &= -N^5_{24} = - N^6_{14} = -N^6_{23}; \notag\\
N^5_{14} &= N^5_{23} =  N^6_{13} = -N^6_{24}; \notag
\end{align*}
\end{multicols}
\noindent
\normalsize
Components of $\nabla N_J$: by
\begin{equation}
\nabla N_J = N^p_{tq, s}\theta^s\otimes\theta^t\otimes\theta^q\otimes e_p, \quad N^p_{tq, s}=-N^p_{qt, s}:
\end{equation}
\footnotesize
\begin{align} \label{nablanijplus}
N^a_{pq, 5} &= N^a_{pq, 6}=0; \\ N^a_{pq, b} &= -\frac{1}{2}\pa{N^5_{pq}\qt{ab}+N^6_{pq}\qq{ab}}. \notag
\end{align}
\noindent
\begin{multicols}{2}
\noindent
\begin{align*}
	N^5_{12, a} &=0; \\
	N^5_{12, 5} &= -N^5_{14}\sq{\frac{1}{t}-\frac{1}{2}t\pa{\qt{13}+\qt{42}}} + \frac{1}{4} N^5_{13}N^5_{14}; \\
	N^5_{12, 6} &= N^5_{13}\sq{\frac{1}{t}-\frac{1}{2}t\pa{\qq{14}+\qq{23}}}
	- \frac{1}{4} \pa{N^5_{14}}^2;\\
	N^5_{13, a} &= -t\sq{\pa{\qt{13}+\qt{42}}_a -
	\pa{\qq{14}+\qq{23}}_a};\\
	N^5_{13, 5} &=
	-\frac{1}{2}tN^5_{14}\pa{\qt{12}+\qt{34}}-2\pa{\qq{12}+\qq{34}};\\
	N^5_{13, 6} &= -\frac{1}{2}tN^5_{14}\pa{\qq{12}+\qq{34}}-2\pa{\qt{12}+\qt{34}};\\
	N^5_{14, a} &= -2t\pa{\qt{14}+\qt{23}}_a = -2t\pa{\qq{13}+\qq{42}}_a;\\
	N^5_{14, 5} &= \frac{1}{2}tN^5_{13}\pa{\qt{12}+\qt{34}}+2\pa{\qt{12}+\qt{34}};\\
	N^5_{14, 6} &=
	\frac{1}{2}tN^5_{13}\pa{\qq{12}+\qq{34}}-2\pa{\qq{12}+\qq{34}};\\
	N^5_{15, a} &= \frac{1}{2}t\pa{N^5_{13}\qt{3a}+N^5_{14}\qt{4a}};\\
	N^5_{15, 5} &= N^5_{15, 6}=0;\\
	N^5_{16, a} &= \frac{1}{2}t\pa{N^5_{13}\qq{3a}+N^5_{14}\qq{4a}};\\
	N^5_{16, 5} &= N^5_{16, 6}=0;\\
	N^5_{23, s} &= N^5_{14, s}; \\
	N^5_{24, s} &= -N^5_{13, s};\\
	N^5_{25, a} &= \frac{1}{2}t\pa{N^5_{14}\qt{3a}-N^5_{13}\qt{4a}};\\
	N^5_{25, 5} &= N^5_{25, 6}=0;\\
	N^5_{26, a} &= \frac{1}{2}t\pa{N^5_{14}\qq{3a}-N^5_{13}\qq{4a}}; \\ N^5_{26, 5} &= N^5_{26, 6}=0 ;
\end{align*}
\begin{align*}
	N^5_{34, s} &=  N^5_{12, s}; \\
	N^5_{35, a} &= -\frac{1}{2}t\pa{N^5_{13}\qt{1a}+N^5_{14}\qt{2a}}; \\ N^5_{35, 5} &= N^5_{35, 6}=0;\\
	N^5_{36, a} &= -\frac{1}{2}t\pa{N^5_{13}\qq{1a}+N^5_{14}\qq{2a}}; \\ N^5_{36, 5} &= N^5_{36, 6}=0; \\
	N^5_{45, a} &= -\frac{1}{2}t\pa{N^5_{14}\qt{1a}-N^5_{13}\qt{2a}}; \\ N^5_{45, 5} &= N^5_{45, 6}=0;\\
	N^5_{46, a} &= -\frac{1}{2}t\pa{N^5_{14}\qq{1a}-N^5_{13}\qq{2a}}; \\ N^5_{46, 5} &= N^5_{46, 6}=0;\\
	N^5_{56, s} &= 0;\\
	N^6_{12, a} &=0; \\
	N^6_{12, 5} &= N^5_{13}\sq{\frac{1}{t}-\frac{1}{2}t\pa{\qt{13}+\qt{42}}} + \frac{1}{4} \pa{N^5_{14}}^2; \\
	N^6_{12, 6} &= N^5_{14}\sq{\frac{1}{t}-\frac{1}{2}t\pa{\qq{14}+\qq{23}}} + \frac{1}{4} N^5_{13}N^5_{14};\\
	N^6_{13, s} &= N^5_{14, s};\\
	N^6_{14, s} &= -N^5_{13, s};\\
	N^6_{15, s} &= N^5_{25, s};\\
	N^6_{16, s} &= N^5_{26, s};\\
	N^6_{23, s} &= -N^5_{13, s};\\
	N^6_{24, s} &= -N^5_{14, s};\\
	N^6_{25, s} &= -N^5_{15, s};
\end{align*}
\begin{align*}
	N^6_{26, s} &= -N^5_{16, s};\\
	N^6_{34, s} &= N^6_{12, s}; \\
	N^6_{35, s} &= N^5_{46, s};\\
	N^6_{36, s} &= N^5_{46, s};
\end{align*}
\begin{align*}
	N^6_{45, s} &= -N^5_{35, s};\\
	N^6_{46, s} &= -N^5_{36, s};\\
	N^6_{56, s} &= 0.
\end{align*}
\end{multicols}
\normalsize
\noindent
Thus, we can now compute the components of the two divergences
\[
\diver N_J=N_{pq,t}^t\theta^p\otimes\theta^q, \qquad
\ol{\diver} N_J=N_{pt,t}^r\theta^p\otimes e_r.
\]
For the sake of simplicity, let
\[
\Gamma:=\qt{13}+\qt{42}-\qq{14}-\qq{23}=2(A_{22}+A_{33}).
\]
Then, we have
\footnotesize
\begin{align} \label{divnijplus1}
N_{12,t}^t&=N_{34,t}^t=0;\\
N_{13,t}^t&=-N_{24,t}^t=\dfrac{t}{2}\sq{(\qq{12}+\qq{34})\pa{N_{13}^5-
		\dfrac{8}{t}}-N_{14}^5(\qt{12}+\qt{34})}; \notag\\
N_{14,t}^t&=N_{23,t}^t=\dfrac{t}{2}\sq{(\qt{12}+\qt{34})\pa{N_{13}^5+
		\dfrac{8}{t}}+N_{14}^5(\qq{12}+\qq{34})}; \notag\\
N_{a5,t}^t&=N_{a6,t}^t=0. \notag
\end{align}
\noindent
\begin{multicols}{2}
\noindent
\begin{align} \label{divnijplus2}
N_{1t,t}^1&=-\dfrac{1}{2}\sq{N_{13}^5(\qt{13}-\qq{14})+N_{14}^5(\qt{14}
	+\qq{13})};\\
N_{2t,t}^1&=-\dfrac{1}{2}\sq{N_{14}^5(\qt{13}-\qq{14})-N_{13}^5(\qt{14}
	+\qq{13})}; \notag\\
N_{3t,t}^1&=-N_{4t,t}^2=\dfrac{1}{2}[N_{14}^5\qt{12}-N_{13}^5\qq{12}];
\notag\\
N_{4t,t}^1&=N_{3t,t}^2=-\dfrac{1}{2}[N_{13}^5\qt{12}+N_{14}^5\qq{12}];
\notag\\
N_{1t,t}^2&=\dfrac{1}{2}\sq{N_{14}^5(\qt{42}-\qq{23})-
	N_{13}^5(\qt{23}+\qq{42})}; \notag\\
N_{2t,t}^2&=-\dfrac{1}{2}\sq{N_{13}^5(\qt{42}-\qq{23})+
	N_{14}^5(\qt{23}+\qq{42})}; \notag\\
N_{1t,t}^3&=-N_{2t,t}^4=\dfrac{1}{2}(N_{13}^5\qq{34}-N_{14}^5\qt{34});
\notag\\
N_{2t,t}^3&=N_{1t,t}^4=\dfrac{1}{2}(N_{13}^5\qt{34}+N_{14}^5\qq{34});
\notag
\end{align}
\begin{align*}
N_{3t,t}^3&=-\dfrac{1}{2}\sq{N_{13}^5(\qt{13}-\qq{23})+
	N_{14}^5(\qt{23}+\qq{13})};\notag\\
N_{4t,t}^3&=-\dfrac{1}{2}\sq{N_{14}^5(\qt{13}-\qq{23})-
	N_{13}^5(\qt{23}+\qq{13})};\notag\\
N_{3t,t}^4&=\dfrac{1}{2}\sq{N_{14}^5(\qt{42}-\qq{14})-
	N_{13}^5(\qt{14}+\qq{42})};\notag\\
N_{4t,t}^4&=-\dfrac{1}{2}\sq{N_{13}^5(\qt{42}-\qq{14})+
	N_{14}^5(\qt{14}+\qq{42})};\notag\\
N_{1t,t}^5&=-N_{2t,t}^6=-t(\Gamma)_3
-2t(\qt{14}+\qt{23})_4;\notag\\
N_{2t,t}^5&=N_{1t,t}^6=t(\Gamma)_4
-2t(\qt{14}+\qt{23})_3;\notag\\
N_{3t,t}^5&=-N_{4t,t}^6=t(\Gamma)_1
+2t(\qt{14}+\qt{23})_2;\notag\\
N_{4t,t}^5&=N_{3t,t}^6=-t(\Gamma)_2
-2t(\qt{14}+\qt{23})_1;\notag\\
N_{5t,t}^p&=N_{6t,t}^p=0. \notag
\end{align*}
\end{multicols}
\normalsize
The components of $N_{\JJ}$ are

\begin{align} \label{nijcompmin}
\NN^5_{13} &= -t({\qt{13}+\qt{42}+\qq{14}+\qq{23}}) = -2t({A_{22}+A_{33}});\\
\NN^1_{35} &= -\dfrac{2}{t}; \notag\\ \notag\\
\NN^5_{13} &= -\NN^5_{24} = \NN^6_{14} = \NN^6_{23}; \notag\\
\NN^1_{35}&=-\NN^3_{15}=-\NN^4_{16}=\NN^4_{25}=-\NN^3_{26}=
\NN^2_{36}=-\NN^2_{45}=\NN^1_{46}; \notag
\end{align}

For the sake of simplicity, let
\[
\Sigma:=\qt{13}+\qt{42}+\qq{14}+\qq{23}=2(A_{22}+A_{33}).
\]
Therefore, the components of $\nabla N_{\JJ}$ are:
\small
\noindent
\begin{multicols}{2}
\noindent
\begin{align}\label{nablanijmin}
	\NN_{13,a}^5&=-t(\Sigma)_{,a}; \\
	\NN_{13,5}^5&=2(\qq{12}+\qq{34});\notag\\
	\NN_{13,6}^5&=-2(\qt{12}+\qt{34}); \notag\\
	\NN_{13,s}^5&=-\NN_{24,s}^5=\NN_{14,s}^6=\NN_{23,s}^6=0; \notag\\
	\NN_{35,s}^1&=\NN_{15,s}^3=\NN_{16,s}^4=\NN_{25,s}^4=0; \notag\\
	\NN_{26,s}^3&=\NN_{36,s}^2=\NN_{45,s}^2=\NN_{46,s}^1=0; \notag
	\NN_{12,s}^1&=0; \notag\\
	\NN_{13,a}^1&=\dfrac{t^2}{2}\pa{\Sigma-\dfrac{2}{t^2}}\qt{1a}; \notag\\
	\NN_{13,5}^1&=\NN_{13,6}^1=0; \notag\\
	\NN_{14,a}^1&=\dfrac{t^2}{2}\pa{\Sigma-\dfrac{2}{t^2}}\qq{1a}; \notag\\
	\NN_{14,5}^1&=\NN_{14,6}^1=0; \notag\\
	\NN_{15,s}^1&=\NN_{16,s}^1=0; \notag\\
	\NN_{23,a}^1&=\dfrac{t^2}{2}\pa{\Sigma\qq{1a}-
		\dfrac{2}{t^2}\qt{2a}};\notag\\
	\NN_{23,5}^1&=\NN_{23,6}^1=0; \notag\\
	\NN_{24,a}^1&=-\dfrac{t^2}{2}\pa{\Sigma\qt{1a}+
		\dfrac{2}{t^2}\qq{2a}};\notag\\
	\NN_{24,5}^1&=\NN_{24,6}^1=0; \notag\\
	\NN_{25,a}^1&=0; \notag\\
	\NN_{25,5}^1&=\qt{14}+\qt{23}; \notag\\
	\NN_{25,6}^1&=(\qq{14}+\qq{23})-\dfrac{2}{t^2}; \notag\\
	\NN_{26,a}^1&=0; \notag\\
	\NN_{26,5}^1&=\dfrac{2}{t^2}-(\qt{13}+\qt{42}); \notag\\
	\NN_{26,6}^1&=-(\qq{13}+\qq{42})=-\NN_{25,5}^1; \notag\\
	\NN_{34,a}^1&=\qt{4a}-\qq{3a}; \notag\\
	\NN_{34,5}^1&=\NN_{34,6}^1=0; \notag\\
	\NN_{36,a}^1&=0; \notag\\
	\NN_{36,5}^1&=\qt{12}+\qt{34}; \notag\\
	\NN_{36,6}^1&=\qq{12}+\qq{34}; \notag\\
	\NN_{45,s}^1&=-\NN_{36,s}^1; \notag\\
	\NN_{46,s}^1&=0; \notag\\
	\NN_{56,s}^1&=-\NN_{34,s}^1; \notag\\
	\NN_{12,s}^2&=0; \notag\\
	\NN_{13,a}^2&=\dfrac{t^2}{2}\pa{\Sigma\qt{2a}-\dfrac{2}{t^2}\qq{1a}};
	\notag\\
	\NN_{13,5}^2&=\NN_{13,6}^2=0; \notag
		\end{align}
\begin{align*}
	\NN_{14,a}^2&=\dfrac{t^2}{2}\pa{\Sigma\qq{2a}+\dfrac{2}{t^2}\qt{1a}};\\
	\NN_{14,5}^2&=\NN_{14,6}^2=0;\\
	\NN_{15,s}^2&=-\NN_{25,s}^1;\\
	\NN_{16,s}^2&=-\NN_{26,s}^1;\\
	\NN_{23,a}^2&=\dfrac{t^2}{2}\pa{\Sigma-\dfrac{2}{t^2}}\qq{2a};\\
	\NN_{23,5}^2&=\NN_{23,6}^2=0;\\
	\NN_{24,a}^2&=\dfrac{t^2}{2}\pa{\dfrac{2}{t^2}-\Sigma}\qt{2a};\\
	\NN_{24,5}^2&=\NN_{24,6}^2=0;\\
	\NN_{25,s}^2&=\NN_{26,s}^2=0;\\
	\NN_{34,a}^2&=\qt{3a}+\qq{4a};\\
	\NN_{34,5}^2&=\NN_{34,6}^2=0;\\
	\NN_{35,s}^2&=\NN_{46,s}^2=-\NN_{36,s}^1;\\
	\NN_{56,s}^2&=-\NN_{34,s}^2;\\
	\NN_{12,a}^3&=\qq{1a}-\qt{2a};\\
	\NN_{12,5}^3&=\NN_{12,6}^3=0\\
	\NN_{13,a}^3&=\dfrac{t^2}{2}\pa{\Sigma-\dfrac{2}{t^2}}\qt{3a};\\
	\NN_{13,5}^3&=\NN_{13,6}^3=0;\\
	\NN_{14,a}^3&=\dfrac{t^2}{2}\pa{\Sigma\qq{3a}-\dfrac{2}{t^2}\qt{4a}};\\
	\NN_{14,5}^3&=\NN_{14,6}^3=0;\\
	\NN_{16,s}^3&=-\NN_{36,s}^1;\\
	\NN_{23,a}^3&=\dfrac{t^2}{2}\pa{\Sigma-\dfrac{2}{t^2}}\qq{3a};\\
	\NN_{23,5}^3&=\NN_{23,6}^3=0;\\
	\NN_{24,a}^3&=-\dfrac{t^2}{2}\pa{\Sigma\qt{3a}+\dfrac{2}{t^2}\qq{4a}};\\
	\NN_{24,5}^3&=\NN_{24,6}^3=0;\\
	\NN_{25,s}^3&=\NN_{36,s}^1;\\
	\NN_{34,s}^3&=0;\\
	\NN_{35,s}^3&=\NN_{36,s}^3=0;\\
	\NN_{45,s}^3&=\NN_{25,s}^1;\\
	\NN_{46,s}^3&=\NN_{26,s}^1;\\
	\NN_{56,s}^3&=-\NN_{12,s}^3;
\end{align*}
\begin{align*}
	\NN_{12,a}^4&=-(\qt{1a}+\qq{2a});\\
	\NN_{12,5}^4&=\NN_{12,6}^4=0;\\
	\NN_{13,a}^4&=\dfrac{t^2}{2}\pa{\Sigma\qt{4a}-\dfrac{2}{t^2}\qq{3a}};\\
	\NN_{13,5}^4&=\NN_{13,6}^4=0;\\
	\NN_{14,a}^4&=\dfrac{t^2}{2}\pa{\Sigma-\dfrac{2}{t^2}}\qq{4a};\\
	\NN_{14,5}^4&=\NN_{14,6}^4=0;\\
	\NN_{15,s}^4&=\NN_{36,s}^1;\\
	\NN_{23,a}^4&=\dfrac{t^2}{2}\pa{\Sigma\qq{4a}+\dfrac{2}{t^2}\qt{3a}};\\
	\NN_{23,5}^4&=\NN_{23,6}^4=0;\\
	\NN_{24,a}^4&=\dfrac{t^2}{2}\pa{\dfrac{2}{t^2}-\Sigma}\qt{4a};\\
	\NN_{24,5}^4&=\NN_{24,6}^4=0;\\
	\NN_{26,s}^4&=\NN_{36,s}^1;\\
	\NN_{34,s}^4&=0;\\
	\NN_{35,s}^4&=-\NN_{25,s}^1;\\
	\NN_{36,s}^4&=-\NN_{26,s}^1;\\
	\NN_{45,s}^4&=\NN_{46,s}^4=0;\\
	\NN_{56,s}^4&=-\NN_{12,s}^4;\\
	\NN_{12,a}^5&=0;\\
	\NN_{12,5}^5&=\dfrac{t^2}{2}\Sigma(\qt{14}+\qt{23});\\
	\NN_{12,6}^5&=\dfrac{t^2}{2}\Sigma\sq{(\qq{14}+\qq{23})-\dfrac{2}{t^2}};\\
	\NN_{14,a}^5&=0;\\
	\NN_{14,5}^5&=-\dfrac{t^2}{2}\Sigma(\qt{12}+\qt{34});\\
	\NN_{14,6}^5&=-\dfrac{t^2}{2}\Sigma(\qq{12}+\qq{34});
\end{align*}
\begin{align*}
	\NN_{15,s}^5&=-\NN_{13,s}^3;\\
	\NN_{16,s}^5&=-\NN_{14,s}^3;\\
	\NN_{23,s}^5&=\NN_{14,s}^5;\\
	\NN_{25,s}^5&=-\NN_{24,s}^4;\\
	\NN_{26,s}^5&=-\NN_{23,s}^4;\\
	\NN_{34,s}^5&=\NN_{12,s}^5;\\
	\NN_{35,s}^5&=\NN_{13,s}^1;\\
	\NN_{36,s}^5&=\NN_{23,s}^1;\\
	\NN_{45,s}^5&=\NN_{24,s}^2;\\
	\NN_{46,s}^5&=-\NN_{14,s}^2;\\
	\NN_{56,s}^5&=0;\\
	\NN_{12,a}^6&=0;\\
	\NN_{12,5}^6&=\dfrac{t^2}{2}\Sigma\sq{\dfrac{2}{t^2}-(\qt{13}+\qt{42})};\\
	\NN_{12,6}^6&=-\dfrac{t^2}{2}\Sigma(\qq{13}+\qq{42})=-\NN_{12,5}^5;\\
	\NN_{13,s}^6&=-\NN_{14,s}^5;\\
	\NN_{15,s}^6&=-\NN_{13,s}^4;\\
	\NN_{16,s}^6&=-\NN_{14,s}^4;\\
	\NN_{24,s}^6&=\NN_{14,s}^5;\\
	\NN_{25,s}^6&=\NN_{24,s}^3;\\
	\NN_{26,s}^6&=-\NN_{23,s}^3;\\
	\NN_{34,s}^6&=\NN_{12,s}^6;\\
	\NN_{35,s}^6&=\NN_{13,s}^2;\\
	\NN_{36,s}^6&=\NN_{23,s}^2;\\
	\NN_{45,s}^6&=-\NN_{24,s}^1;\\
	\NN_{46,s}^6&=\NN_{14,s}^1;\\
	\NN_{56,s}^6&=0.
\end{align*}
\end{multicols}
\normalsize
\noindent
As far as the two divergences
\[
\diver N_{\JJ}=\NN_{pq,t}^t\theta^p\otimes\theta^q, \qquad
\ol{\diver} N_{\JJ}=\NN_{pt,t}^r\theta^p\otimes e_r.
\]
are concerned, we have, for the first one,
\begin{align} \label{divnijmin1}
\NN_{13,t}^t&=\dfrac{t^2}{2}\pa{\Sigma+\dfrac{2}{t^2}}(\qq{12}+\qq{34})=-\NN_{24,t}^t;\\
\NN_{14,t}^t&=-\dfrac{t^2}{2}\pa{\Sigma+\dfrac{2}{t^2}}(\qt{12}+\qt{34})=\NN_{23,t}^t; \notag
\end{align}
all other components are zero. For the second one, we obtain
\noindent
\begin{multicols}{2}
\noindent
\footnotesize
\begin{align} \label{divnijmin2}
	\NN_{1t,t}^1&=\dfrac{t^2}{2}\pa{\Sigma-\dfrac{2}{t^2}}(\qt{13}+\qq{14});\\
	\NN_{2t,t}^1&=\dfrac{t^2}{2}\pa{\Sigma-\dfrac{2}{t^2}}(\qq{13}-\qt{14});
	\notag\\
	\NN_{3t,t}^1&=\dfrac{t^2}{2}\pa{\dfrac{2}{t^2}-\Sigma}\qq{12}; \notag\\
	\NN_{4t,t}^1&=\dfrac{t^2}{2}\pa{\Sigma-\dfrac{2}{t^2}}\qt{12}; \notag\\
	\NN_{5t,t}^1&=\NN_{6t,t}^1=0; \notag\\
	\NN_{1t,t}^2&=\NN_{2t,t}^1; \notag\\
	\NN_{2t,t}^2&=\dfrac{t^2}{2}\pa{\Sigma-\dfrac{2}{t^2}}(\qt{42}+\qq{23});
	\notag\\
	\NN_{3t,t}^2&=\NN_{4t,t}^1;\notag\\
	\NN_{4t,t}^2&=-\NN_{3t,t}^1;\notag\\
	\NN_{5t,t}^2&=\NN_{6t,t}^2=0;\notag\\
	\NN_{1t,t}^3&=\dfrac{t^2}{2}\pa{\Sigma-\dfrac{2}{t^2}}\qq{34};\notag\\
	\NN_{2t,t}^3&=\dfrac{t^2}{2}\pa{\dfrac{2}{t^2}-\Sigma}\qt{34};\notag\\
	\NN_{3t,t}^3&=\dfrac{t^2}{2}\pa{\Sigma-\dfrac{2}{t^2}}(\qt{13}+\qq{23});
	\notag\\
	\NN_{4t,t}^3&=\dfrac{t^2}{2}\pa{\Sigma-\dfrac{2}{t^2}}(\qq{13}-\qt{23});
	\notag\\
	\NN_{5t,t}^3&=\NN_{6t,t}^3=0;\notag
\end{align}
\begin{align*}
	\NN_{1t,t}^4&=\NN_{2t,t}^3;\\
	\NN_{2t,t}^4&=-\NN_{1t,t}^3;\\
	\NN_{3t,t}^4&=\NN_{4t,t}^3;\\
	\NN_{4t,t}^4&=\dfrac{t^2}{2}\pa{\Sigma-\dfrac{2}{t^2}}(\qt{42}+\qq{14});\\
	\NN_{5t,t}^4&=\NN_{6t,t}^4=0;\\
	\NN_{1t,t}^5&=-t(\Sigma)_{,3};\\
	\NN_{2t,t}^5&=t(\Sigma)_{,4};\\
	\NN_{3t,t}^5&=t(\Sigma)_{,1};\\
	\NN_{4t,t}^5&=-t(\Sigma)_{,2};\\
	\NN_{5t,t}^5&=t^2\pa{\dfrac{2}{t^2}-\Sigma}(\qt{13}+\qt{42});\\
	\NN_{6t,t}^5&=t^2\pa{\dfrac{2}{t^2}-\Sigma}(\qt{14}+\qt{23});\\
	\NN_{1t,t}^6&=-\NN_{2t,t}^5;\\
	\NN_{2t,t}^6&=\NN_{1t,t}^5;\\
	\NN_{3t,t}^6&=-\NN_{4t,t}^5;\\
	\NN_{4t,t}^6&=\NN_{3t,t}^5;\\
	\NN_{5t,t}^6&=\NN_{6t,t}^5;\\
	\NN_{6t,t}^6&=t^2\pa{\dfrac{2}{t^2}-\Sigma}(\qq{14}+\qq{23}).
\end{align*}
\end{multicols}

\section{Acknowledgements}
The first and the third author are members of the Gruppo Nazionale per le
Strutture Algebriche, Geometriche e loro Applicazioni (GNSAGA) of INdAM
(Istituto Nazionale di Alta Matematica).
\bibliographystyle{abbrv}
\bibliography{BiblioTwistor2}

\end{document}